\newcommand{\mr}[1]{\mathrm{#1}}
\newcommand{\mf}[1]{\mathfrak{#1}}
\newcommand{\mc}[1]{\mathcal{#1}}
\newcommand{\mb}[1]{\mathbb{#1}}
\newcommand{\Z}{\mb{Z}}
\newcommand{\Q}{\mb{Q}}
\newcommand{\R}{\mb{R}}
\newcommand{\zp}{\mb{Z}_p}
\newcommand{\qp}{\mb{Q}_p}
\newcommand{\C}{\mathcal{C}}
\newcommand{\D}{\mathcal{D}}
\newcommand{\T}{\mathcal{T}}
\newcommand{\cts}{\mr{cts}}
\newcommand{\La}{\Lambda}
\newcommand{\Lo}{\La^{\circ}}
\newcommand{\invlim}[1]{\varprojlim_{#1}\,}
\newcommand{\dirlim}[1]{\varinjlim_{#1}\,}
\newcommand{\ps}[1]{\llbracket #1 \rrbracket}
\newcommand{\cotimes}[1]{\,\hat{\otimes}_{#1} \,}
\DeclareMathOperator{\Hom}{Hom} 
 \DeclareMathOperator{\Gal}{Gal}
\DeclareMathOperator{\coker}{coker} \DeclareMathOperator{\Cone}{Cone}
\DeclareMathOperator{\Ch}{Ch} \DeclareMathOperator{\Maps}{Maps}
 \DeclareMathOperator{\Mod}{Mod}
\newtheorem{theorem}{Theorem}[subsection]
\newtheorem{proposition}[theorem]{Proposition}
\newtheorem{lemma}[theorem]{Lemma}
\newtheorem{corollary}[theorem]{Corollary}
\newtheorem*{thm}{Theorem}
\theoremstyle{definition}
\theoremstyle{remark}
\newtheorem{remark}[theorem]{Remark}
\newtheorem*{ack}{Acknowledgments}
\newcounter{countii}
\newenvironment{enum2}{\begin{list}{\alph{countii}.}{\usecounter{countii}
\setlength{\topsep}{3pt}
\setlength{\labelwidth}{0bp}
\setlength{\leftmargin}{18pt}
\setlength{\itemindent}{5pt}
\setlength{\itemsep}{0pt}
\setlength{\parsep}{0pt}}
}{\end{list}}
\numberwithin{equation}{section}
\begin{document}

\title{Reciprocity maps with restricted ramification}
\author{Romyar T. Sharifi}
\date{}
\maketitle

\begin{abstract}
	We compare two maps that arise in study of the cohomology of global fields 
	with ramification restricted to a finite set $S$ of primes.  One of these maps, which we call 
	an $S$-reciprocity map, interpolates the values of cup products in $S$-ramified cohomology.  In the
	case of $p$-ramified cohomology of the $p$th cyclotomic field for an odd prime $p$, 
	we use this to exhibit an intriguing relationship between particular 
	values of the cup product on cyclotomic $p$-units. 
	We then consider higher analogues of the $S$-reciprocity map and relate their cokernels to the graded quotients in
	augmentation filtrations of Iwasawa modules.
\end{abstract}

\section{Introduction}

The primary goal among several in this work is the comparison of two maps that arise in the Galois cohomology of global fields with restricted ramification, which might at first appear rather unrelated.  In this introduction, we introduce the two maps for a fixed global field.  In the body of the paper, we shall study the maps that they induce up towers of such fields.

Fix a prime $p$, a global field $F$ of characteristic not equal to $p$, and a finite set of primes $S$ of $F$ including all those above $p$ and any real infinite places.   We suppose that either $p$ is odd or $F$ has no real places.  We will use the terms \emph{$S$-ramified} and \emph{$S$-split} to mean unramified outside of the primes in $S$ and completely split at the primes in $S$, respectively.  Let $G_{F,S}$ denote the Galois group of the maximal $S$-ramified extension $F_S$ of $F$.

We will be concerned with three objects related to the arithmetic of $F$:
\begin{enum2}
	\item[$\bullet$] the pro-$p$ completion $\mc{U}_F$ of the $S$-units of $F$,
	\item[$\bullet$] the Galois group $\mf{X}_F$ of the 
	maximal $S$-ramified abelian pro-$p$ extension of $F$, and
	\item[$\bullet$] the Galois group $Y_F$ of the maximal unramified, $S$-split 
	abelian $p$-extension $H_F$ of $F$.
\end{enum2}
Each of these three groups has a fairly simple cohomological description.
That is, $\mc{U}_F$ is canonically isomorphic to the continuous cohomology group 
$H^1(G_{F,S},\zp(1))$ via Kummer theory, and $\mf{X}_F$ is canonically isomorphic to
its Pontryagin double dual expressed as $H^1(G_{F,S},\qp/\zp)^{\vee}$.  As a quotient of $\mf{X}_F$,
the group $Y_F$ is identified with 
the image of the Poitou-Tate map
$$
	H^1(G_{F,S},\qp/\zp)^{\vee} \to H^2(G_{F,S},\zp(1)),
$$
where $A^{\vee}$ denotes the Pontryagin dual of a topological abelian group $A$.
Alternatively, we may use class field theory and Kummer theory to identify
$Y_F$ with a subgroup of $H^2(G_{F,S},\zp(1))$.  That these provide the same identification
is non-obvious: see Theorem \ref{mapisnat} for a proof.

The group $G_{F,S}$ acts trivially on its abelian quotient $\mf{X}_F$ via conjugation.
The \emph{$S$-reciprocity map} for $F$ is then a homomorphism
$$
	\Psi_F \colon \mc{U}_F \to H^2(G_{F,S},\mf{X}_F(1))
$$
that interpolates values of the cup products
$$
	 H^1(G_{F,S},\Z/p^n\Z)  \times H^1(G_{F,S},\zp(1)) \xrightarrow{\cup} H^2(G_{F,S},\mu_{p^n})
$$
in the sense that if $\rho \in \Hom(\mf{X}_F,\Z/p^n\Z) \subseteq \mf{X}_F^{\vee}$ and $\rho^*$
denotes the induced map 
$$ 
	\rho^* \colon H^2(G_{F,S},\mf{X}_F(1)) \to H^2(G_{F,S},\mu_{p^n}),
$$
then for any $a \in \mc{U}_F$, we have
$$
	\rho \cup a = \rho^*(\Psi_F(a)).
$$ 
In fact, $\Psi_F$
may itself be viewed as left cup product with the element of $H^1(G_{F,S},\mf{X}_F)$
that is the quotient map $G_{F,S} \to \mf{X}_F$.
In the case that $F$ is abelian, the $S$-reciprocity map turns out to be of considerable arithmetic interest, its values relating to $p$-adic $L$-values of cuspidal eigenforms that satisfy congruences with Eisenstein series (see \cite{me-Lfn} and \cite{fk-proof}).  

We also have a second homomorphism
$$
	\Theta_F \colon \mc{U}_F \to H^1(G_{F,S},Y_F^{\vee})^{\vee}
$$
that can be defined as follows.  
Let $\mc{Y}$ denote the quotient of the group ring $\zp[Y_F]$ by the square
of its augmentation ideal, so that we have an exact sequence
$$
	0 \to Y_F \to \mc{Y} \to \zp \to 0
$$
of $\zp[G_{F,S}]$-modules, the $G_{F,S}$-cocycle determining the class of the extension 
being the quotient map $G_{F,S} \to Y_F$.
For each $v \in S$, choose a prime over $v$ in $F_S$ and thereby a homomorphism  $j_v \colon G_{F_v} \to G_{F,S}$ from the absolute Galois group of $F_v$.   The exact sequence is split as a sequence of 
$\zp[G_{F_v}]$-modules for the action induced by $j_v$. 

We have a cup product pairing
$$
	H^i(G_{F,S},\zp(1)) \times H^{2-i}(G_{F,S},Y_F^{\vee}) \to H^2(G_{F,S},Y_F^{\vee}(1)).
$$
As $G_{F,S}$ has $p$-cohomological dimension $2$, its target may be identified with the quotient of $H^2(G_{F,S},\mc{Y}^{\vee}(1))$ 
by the image of $H^2(G_{F,S},\mu_{p^{\infty}})$
under the maps induced by the Tate twisted dual sequence
$$
	0 \to \mu_{p^{\infty}} \to \mc{Y}^{\vee}(1) \to Y_F^{\vee}(1) \to 0.
$$
The dual of the local splitting of $\mc{Y} \to \zp$
and the sum of invariant maps of local class field theory provide maps
$$
	H^2(G_{F,S},\mc{Y}^{\vee}(1)) \to \bigoplus_{v \in S} H^2(G_{F_v},\mu_{p^{\infty}})
	\to \qp/\zp,
$$
and their composite is trivial on $H^2(G_{F,S},\mu_{p^{\infty}})$.  Thus, we have a well-defined pairing
$$
	H^i(G_{F,S},\zp(1)) \times H^{2-i}(G_{F,S},Y_F^{\vee}) \to \qp/\zp,
$$
and we consider the resulting map from its first term to the Pontryagin dual of its second.
For $i = 1$, this map is precisely $\Theta_F$.
For $i = 2$, it is identified with a map 
$$
	q_F \colon H^2(G_{F,S},\zp(1)) \to Y_F.
$$

An alternate, but equivalent, definition of these maps is given in Section \ref{fundex} (see also Section \ref{conncup2}).  Note that $\Theta_F$ and $q_F$ need not be canonical, as they can depend on
the choices of primes of $F_S$ over each $v \in S$ (or rather, the primes
of $H_F$ below them).

So that we may compare $\Psi_F$ and $\Theta_F$, let us examine their codomains.
As $G_{F,S}$ acts trivially on $\mf{X}_F$ and has $p$-cohomological dimension $2$, we have
$$
	H^2(G_{F,S},\mf{X}_F(1)) \cong H^2(G_{F,S},\zp(1)) \otimes_{\zp} \mf{X}_F.
$$ 
Moreover, there are canonical isomorphisms
$$
	H^1(G_{F,S},Y_F^{\vee})^{\vee} 
	\cong \Hom(\mf{X}_F,Y_F^{\vee})^{\vee}
	\cong \Hom(Y_F \otimes_{\zp} \mf{X}_F, \qp/\zp)^{\vee} 
	\cong Y_F \otimes_{\zp} \mf{X}_F,
$$ 
through which we identify the leftmost and rightmost groups in the equation.

We may now state our key result, the Iwasawa-theoretic generalization of which
is found in Theorem \ref{comparemaps}.

\begin{thm} \label{mainthm}
	The map $q_F$ is a splitting of the injection of $Y_F$ in $H^2(G_{F,S},\zp(1))$
	given by Poitou-Tate duality, and the diagram
	$$
		 \xymatrix@C=30pt@R=30pt{
			\mc{U}_F \ar[r]^-{\Psi_F} 
			\ar[rd]_{-\Theta_F} & H^2(G_{F,S},\zp(1)) \otimes_{\zp} \mf{X}_F 
			\ar[d]^{q_F \otimes \mr{id}} \\
			&  Y_F  \otimes_{\zp} \mf{X}_F
		}
	$$
	commutes.
\end{thm}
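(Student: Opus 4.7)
The plan is to unpack the cup-product descriptions of $\Psi_F$, $\Theta_F$, and $q_F$, and then to read off the commutativity from graded commutativity of cup product. Let $\kappa \in H^1(G_{F,S}, \mf{X}_F)$ be the tautological class of the quotient $G_{F,S} \twoheadrightarrow \mf{X}_F$, so that $\Psi_F(a) = \kappa \cup a$, and let $\lambda \colon H^2(G_{F,S}, Y_F^{\vee}(1)) \to \qp/\zp$ denote the local--global composite obtained by lifting to $H^2(G_{F,S}, \mc{Y}^{\vee}(1))$, applying the duals of the local splittings $s_v \colon \zp \to \mc{Y}$, and summing invariant maps. Then $\Theta_F(a)$ is the functional $\phi \mapsto \lambda(a \cup \phi)$ on $H^1(G_{F,S}, Y_F^{\vee})$, while $q_F(z)$ is characterized by $\psi(q_F(z)) = \lambda(\psi^{*}(z))$ for all $\psi \in Y_F^{\vee}$.

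For the commutativity of the diagram, I would argue as follows. Trivial $G_{F,S}$-action on $Y_F^{\vee}$ gives $H^1(G_{F,S}, Y_F^{\vee}) \cong \Hom(\mf{X}_F, Y_F^{\vee})$, under which $\phi$ corresponds to the pushforward $\phi^{*}(\kappa)$. Functoriality of cup product together with graded commutativity in degree one then yields
$$a \cup \phi \,=\, a \cup \phi^{*}(\kappa) \,=\, \phi^{*}(a \cup \kappa) \,=\, -\phi^{*}(\kappa \cup a) \,=\, -\phi^{*}(\Psi_F(a)).$$
Writing $\Psi_F(a) = \sum_i z_i \otimes x_i$ in $H^2(G_{F,S}, \zp(1)) \otimes_{\zp} \mf{X}_F$, applying $\lambda$ and invoking the defining identity for $q_F$ give
$$\langle \Theta_F(a), \phi \rangle \,=\, \lambda(a \cup \phi) \,=\, -\sum_i \phi(x_i)(q_F(z_i)) \,=\, -\langle (q_F \otimes \mr{id})(\Psi_F(a)), \phi \rangle.$$
Since the pairing $(Y_F \otimes_{\zp} \mf{X}_F) \times \Hom(\mf{X}_F, Y_F^{\vee}) \to \qp/\zp$ is non-degenerate, this forces $(q_F \otimes \mr{id}) \circ \Psi_F = -\Theta_F$.

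For the splitting statement, I would check that $\lambda(\phi^{*}(\iota(y))) = \phi(y)$ for $y \in Y_F$ and $\phi \in Y_F^{\vee}$, where $\iota$ denotes the Poitou--Tate injection. Choosing a lift $\tilde\phi \in \mc{Y}^{\vee}$ of $\phi$ lifts $\phi^{*}(\iota(y))$ to a class in $H^2(G_{F,S}, \mc{Y}^{\vee}(1))$, and the desired identity amounts to matching the sum over $v$ of the local invariants of the components $s_v^{\vee}(\tilde\phi^{*}(\iota(y)))$ with the value $\phi(y)$. Under the class-field-theoretic identification of $Y_F \hookrightarrow H^2(G_{F,S}, \zp(1))$ (compatible with Poitou--Tate by \cite{me-ident}), the class $\iota(y)$ is precisely the one for which, paired against any $\psi \in Y_F^{\vee}$, the sum of local invariants returns $\psi(y)$, by the reciprocity law underlying the construction of $\mc{Y}$ together with the decomposition $\mc{Y} \cong \zp \oplus Y_F$ as $\zp[G_{F_v}]$-modules induced by $s_v$.

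The main obstacle, in my view, is the splitting: the commutativity half is essentially formal once graded commutativity of cup product is invoked, whereas verifying $q_F \circ \iota = \mr{id}_{Y_F}$ requires carefully matching the Poitou--Tate/class-field-theoretic identification of $Y_F$ inside $H^2(G_{F,S}, \zp(1))$ with the local splittings used to build $\lambda$. I would expect the proof to proceed through the alternate description of these maps promised in Section \ref{fundex}, which is presumably set up to make exactly this compatibility transparent.
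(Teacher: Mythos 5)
Your commutativity argument is essentially correct and uses the same central idea as the paper's proof of Theorem \ref{comparemaps}: graded commutativity of the cup product in degree one, $a \cup \kappa = -\kappa \cup a$, followed by naturality with respect to the pushforward $\phi_*\colon \mf{X}_F \to Y_F^{\vee}$. The paper executes this through its Selmer-complex formalism (Propositions \ref{equalcup} and \ref{thetadual}, Lemma \ref{psixidual2}), chasing the skew-symmetry through a diagram relating $H^2_S$, $H^2_{c,S}$, $H^3_{c,S}$ and their duals; you instead compute directly against the Pontryagin-dual pairings from the introduction. Both are valid, and your identity $\langle \Theta_F(a), \phi\rangle = \lambda(a\cup\phi) = -\lambda(\phi_*(\Psi_F(a))) = -\langle (q_F\otimes\mathrm{id})(\Psi_F(a)),\phi\rangle$ carries through since $Y_F\otimes_{\zp}\mf{X}_F$ is finite and the pairing with $\Hom(\mf{X}_F,Y_F^{\vee})$ is perfect.

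The splitting statement $q_F\circ\iota_F=\mathrm{id}_{Y_F}$ is where your proposal has a genuine gap, and you flag this yourself. The claim that ``the class $\iota(y)$ is precisely the one for which, paired against any $\psi\in Y_F^{\vee}$, the sum of local invariants returns $\psi(y)$, by the reciprocity law underlying the construction of $\mc{Y}$'' is not an argument: there is no reciprocity law built into the definition of $\mc{Y}$ as the quotient of $\zp[Y_F]$ by the square of its augmentation ideal. What must actually be shown is that the map $\lambda$ (built from lifting to $\mc{Y}^{\vee}(1)$-coefficients, applying the duals of the local splittings $s_v$, and summing local invariants), precomposed with the Poitou--Tate inclusion $\iota_F\colon Y_F\hookrightarrow H^2(G_{F,S},\zp(1))$, recovers the evaluation pairing $Y_F\times Y_F^{\vee}\to\qp/\zp$. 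This is the content of Proposition \ref{theta2}, where the paper proves it by showing that $q_F$ restricted through $H^2_c(G_{F,S},\zp(1))\to H^2(G_{F,S},\zp(1))$ equals the connecting map $\kappa_{c,\mc{Y}}^2$, which by Lemma \ref{psixidual2} is Poitou--Tate dual (up to sign) to the cup product with $\chi^*$, i.e.\ to the natural inclusion $Y_F^{\vee}\hookrightarrow\mf{X}_F^{\vee}$; dualizing back gives the projection $\mf{X}_F\to Y_F$, which restricts to the identity on $Y_F$. Your plan of unwinding the local invariants directly could in principle be made to work, but as written it replaces the needed duality computation with an appeal to an unspecified ``reciprocity law'' and a reference to \cite{me-ident}, and so does not constitute a proof of the splitting.
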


The case that $F = \Q(\mu_p)$ for an odd prime $p$ and $S$ contains only the prime over $p$ provides an entertaining
application that is studied in Section \ref{special}.  Consider the cup product pairing
$$
	(\ \, , \ ) \colon \mc{U}_F \times \mc{U}_F \to Y_F \otimes_{\zp} \mu_p.
$$
Let $\Delta = \Gal(F/\Q)$, let 
$\omega \colon \Delta \to \zp^{\times}$ be the Teichm\"uller character, and for odd $i \in \Z$, 
let $\eta_i \in \mc{U}_F$ denote the projection of $(1-\zeta_p)^{p-1}$ to the 
$\omega^{1-i}$-eigenspace $\mc{U}_F^{(1-i)}$ of $\mc{U}_F$.
Suppose that Vandiver's conjecture holds at $p$ and that two distinct eigenspaces 
$Y_F^{(1-k)}$ and $Y_F^{(1-k')}$ of $Y_F$ are nonzero for even $k, k' \in \Z$.
Then a simple argument using Theorem \ref{mainthm} tells us that 
$(\eta_{p-k},\eta_{k+k'-1})$ is nonzero if and only
if $(\eta_{p-k'},\eta_{k+k'-1})$ is nonzero (see Theorem \ref{equivvals}).  We find this rather
intriguing, as the two values lie in distinct eigenspaces of $Y_F \otimes_{\zp} \mu_p$
that would at first glance appear to bear little relation to one another.

In a sequel to this paper \cite{me-selmer}, we will apply Theorem \ref{mainthm} to the study of Selmer groups of modular representations.  In particular, we will consider a main conjecture of sorts for a dual Selmer group of the family of residual representations attached to cuspidal eigenforms that satisfy mod $p$ congruences with ordinary Eisenstein series, proving it under certain assumptions.   This work allows characters with parity opposite to that imposed in earlier well-known work of Greenberg and Vatsal \cite{gv}, which complicates the structure of the Selmer groups considerably.

In Section \ref{higher}, we turn to the study of \emph{higher $S$-reciprocity maps} arising from the filtration of the completed group ring $\Omega = \zp\ps{\mf{X}_F}$ by the powers of its augmentation ideal $I$.  
For $n \ge 1$, the $n$th higher $S$-reciprocity map 
$$
	\Psi_F^{(n)} \colon H^1(G_{F,S},\Omega/I^n(1)) \to H^2(G_{F,S},\zp(1)) \otimes_{\zp} I^n/I^{n+1}
$$
is defined as the connecting homomorphism arising from the exact sequence
$$
	0 \to I^n/I^{n+1} \to \Omega/I^{n+1} \to \Omega/I^n \to 0.
$$
The map $\Psi_F^{(1)}$ is just $\Psi_F$: for this, recall that $\Omega/I \cong \zp$ and $I/I^2 \cong \mf{X}_F$.

The higher reciprocity maps $\Psi_F^{(n)}$ have analogues $\Psi_{E/F}^{(n)}$ for every finite $S$-ramified
$p$-extension $E/F$ in which one replaces the $\zp$-algebra $\Omega$ by $\zp[\Gal(E/F)]$ and the augmentation ideal 
$I$ by the corresponding augmentation ideal. By the results of \cite{llsww}, the cokernel of $\Psi_{E/F}^{(n)}$ is isomorphic to 
the $n$th graded quotient in the augmentation filtration of $H^2(G_{E,S},\zp(1))$ for $G_{E,S} = \Gal(F_S/E)$. 
This isomorphism extends to general pro-$p$ extensions $E/F$ by using the
$S$-ramified Iwasawa cohomology of $E$ (see Theorem \ref{cokerSrec}).

Suppose now that $E/F$ is a $\zp$-extension.  We consider $Y_F$ and the Galois group $Y_E$ of the maximal unramified, $S$-split
abelian pro-$p$ extension of $E$.  Restriction defines a map $R_{E/F} \colon Y_E/IY_E \to Y_F$ that can be fit into a six term exact sequence as in \eqref{n=0seq}.  This sequence has its origins in work of Iwasawa \cite{iwasawa} and is found more directly
in the appendix to \cite{hs}.  We aim to understand the higher graded quotients $I^nY_E/I^{n+1}Y_E$ in terms of the higher $S$-reciprocity maps. 

To this end, we define in Section \ref{control} a restriction of $\Psi_{E/F}^{(n)}$ to a map
$$
	\psi_{E/F}^{(n)} \colon \mc{U}_{E/F}^{(n)} \to Y_F \otimes_{\zp} I^n/I^{n+1},
$$
where $\mc{U}_{E/F}^{(n)}$ consists of those elements of $H^1(G_{F,S},\Omega/I^n(1))$ with locally trivial image under 
$\Psi_{E/F}^{(n)}$.  
We then construct a map
$$
	 R_{E/F}^{(n)} \colon I^nY_E/I^{n+1}Y_E \to \coker \psi_{E/F}^{(n)}
$$ 
that is an analogue of $R_{E/F}$ for the higher graded quotients in the augmentation filtration of $Y_E$.

Theorem \ref{exseqgraded} provides a $6$-term exact sequence into which $R_{E/F}^{(n)}$ fits.  This significantly improves the main result of \cite{me-massey} (see Corollary 6.4 therein), stated there in terms of Massey products, that $R_{E/F}^{(n)}$ is an isomorphism
in the special case of perfect control that $S$ consists of a single prime that does not split at all in $E$.

\begin{ack}	
	The author's research was supported in part
	the National Science Foundation under Grant No.\  DMS-1801963. 
	Part of the research for this article took place 
	during stays at the Max Planck Institute for
	Mathematics, the Institut des Hautes \'Etudes Scientifiques, and the Fields Institute. 
	The author thanks these institutes for their hospitality.  The author thanks Bisi Agboola,
	Takako Fukaya, Ralph Greenberg, Kazuya Kato, and Bill McCallum for 
	stimulating conversations at various points in the writing.  He particularly thanks Meng Fai Lim for numerous
	helpful comments on some of the drafts of the paper.
\end{ack}

\section{Cohomology} \label{Galcoh}

The following section provides preliminaries on the Iwasawa-theoretic generalizations of Tate and Poitou-Tate duality that we require.  For the case of a complete local noetherian ring with finite residue field, these dualities are detailed in \cite{nekovar}.  The case of a general profinite ring can be found in \cite{lim}.  We use the standard conventions for signs of differentials on complexes, as in \cite[Chapter 1]{nekovar}.

\subsection{Connecting maps and cup products} \label{conncup1}

Let $G$ be a profinite group, and let $\La$ be a profinite ring.  We suppose that $\Lambda$ is a topological $R$-algebra for a commutative profinite ring $R$ in its center.  For later use, we fix a set $\mc{I}$ of open ideals of $\La$ that forms a basis of neighborhoods of $0$ in $\La$.

Let $\mc{T}_{\La,G}$ denote the category of
topological $\La$-modules endowed with a continuous $\La$-linear action of $G$, which is to say topological $\La\ps{G}$-modules, with morphisms the continuous $\La[G]$-module homomorphisms.
Let $\mc{C}_{\La,G}$ (resp., $\mc{D}_{\La,G}$)
denote the full subcategories of compact (resp., discrete) $\La$-modules.  Let $\mc{C}_{\La}$ (resp., $\mc{D}_{\La}$) denote the full subcategories of modules with trivial $G$-actions.  

Let
\begin{equation} \label{extension}
	0 \to A \xrightarrow{\iota} B \xrightarrow{\pi} C \to 0
\end{equation}
be a short exact sequence in $\mc{C}_{\La,G}$ or $\mc{D}_{\La,G}$.
Such a sequence gives rise to a short exact sequence
$$
	0 \to C(G,A) \to C(G,B) \to C(G,C) \to 0
$$
of inhomogeneous continuous $G$-cochain complexes (see Appendix \ref{cochcoh}), 
and the maps between the terms of the complexes are continuous with respect to the topologies on spaces of continuous maps of Lemma \ref{mapstop}.  In the case that the modules of \eqref{extension} are in $\mc{C}_{\La,G}$, we suppose in what follows that $G$ has
the property that the cohomology groups $H^i(G,M)$ of $C(G,M)$ 
are finite for every finite $\Z[G]$-module $M$.  The following is standard.

\begin{lemma}
	Let $s \colon C \to B$ be a continuous function that splits $\pi$ of \eqref{extension}.  Define 
	$$
		\partial^i \colon H^i(G,C) \to H^{i+1}(G,A)
	$$ 
	on the class of a cocycle $f$
	to be the class of $\iota^{-1} \circ d^i_B(s \circ f)$, where $d^i_B$ denotes the $i$th
	differential on $C(G,B)$.  The resulting sequence
	$$
		\cdots \to H^i(G,A) \to H^i(G,B) \to H^i(G,C) \xrightarrow{\partial^i} H^{i+1}(G,A) \to \cdots
	$$
	is exact in $\C_{\La}$ $($resp., $\mc{D}_{\La})$.
\end{lemma}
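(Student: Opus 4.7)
The plan is to construct the long exact sequence by the usual snake-lemma argument, leveraging the continuous splitting $s$ to carry it out at the level of continuous cochains, and then to verify that the resulting sequence is exact in the stated topological category ($\C_\La$ or $\D_\La$).

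First I would observe that the continuous section $s$ is exactly what is needed to promote the evident maps $C(G,A) \hookrightarrow C(G,B) \to C(G,C)$ to a short exact sequence of continuous cochain complexes: any $f \in C^i(G,C)$ lifts to $s \circ f \in C^i(G,B)$, and since $s$ is continuous, the lift is continuous. For a cocycle $f$, the identity $\pi \circ d_B^i(s\circ f) = d_C^i(f) = 0$ shows that $d_B^i(s\circ f)$ takes values in the image of $\iota$, so $\iota^{-1} \circ d_B^i \circ s_*$ is defined on $Z^i(G,C)$, takes values in $Z^{i+1}(G,A)$, and is continuous.

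Next I would run the standard diagram chase to show that the resulting map on cochains descends to a well-defined homomorphism $\partial^i$ on cohomology, independent of the choices of $s$ and of cocycle representative (two choices of splitting differ by a continuous map $C \to A$, and modifying $f$ by a coboundary modifies $d^i_B(s\circ f)$ by a coboundary in $A$). The same chase produces exactness of the underlying sequence of abelian groups at $H^i(G,A)$, $H^i(G,B)$, and $H^i(G,C)$. In the discrete case this is the whole story, since a sequence in $\D_\La$ is exact as soon as its underlying sequence of abelian groups is, and the continuity of the maps is already in hand.

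The compact case is where the main obstacle lies, because exactness in $\C_\La$ demands more than algebraic exactness: one must know that images are closed and that the groups involved carry the correct compact topology. Here I would write each term as an inverse limit $M = \invlim{J \in \mc{I}} M/JM$ of finite $\La[G]$-modules, apply the construction above level-by-level, and invoke the standing hypothesis that $H^i(G,N)$ is finite for every finite $\Z[G]$-module $N$. This finiteness supplies Mittag-Leffler for the inverse system of long exact sequences at finite level, so taking the inverse limit preserves exactness and equips the continuous cohomology groups $H^i(G,M) \cong \invlim{J} H^i(G, M/JM)$ with the compact topology in which the maps constructed on cochains descend to continuous morphisms. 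The delicate point to verify carefully is precisely this interchange of inverse limits with continuous cohomology, which is where the finiteness hypothesis on $H^i(G,\cdot)$ is essential.
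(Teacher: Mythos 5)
Your overall strategy — use the continuous section $s$ to get a short exact sequence of cochain complexes, run the standard connecting-map construction, and then address the topological content — is sound, and your handling of the discrete case is fine. But in the compact case there is a genuine gap, and you also take a detour the paper avoids.

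The gap: writing a compact $\La$-module as $M = \invlim{J \in \mc{I}} M/JM$ with the $M/JM$ \emph{finite} is valid only for finitely generated $M$ (Lemma~\ref{modfacts}a). In general $M/JM$ is a compact module over the finite ring $\La/J$ and can be infinite (take $\La = \zp$, $J = p\zp$, $M = \prod_{n} \zp$). One must instead use a cofinal system of finite $\La[G]$-module quotients of $M$, as in Lemma~\ref{limfin} and Proposition~\ref{topctscoh}; moreover these systems must be chosen compatibly across $A$, $B$, $C$ so that they form an inverse system of short exact sequences, which requires a small argument (e.g., start from a system for $B$ and take images/cokernels, using that $\iota(A)$ is closed).

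The detour: once the cochain sequence $0 \to C(G,A) \to C(G,B) \to C(G,C) \to 0$ is exact (which the continuous section gives), the ordinary snake lemma already produces the long exact sequence of underlying abelian groups with no inverse limit or Mittag--Leffler needed. And for compact Hausdorff modules with continuous maps, algebraic exactness \emph{is} exactness in $\C_\La$: the image of a continuous map out of a compact module is compact, hence closed, so your worry about images being closed is already taken care of. This is precisely the paper's route: it declares the algebraic exactness standard, then invokes Lemma~\ref{cochdisc} and Proposition~\ref{topctscoh} only to place $H^i$ in the right category and to see that the maps (including $\partial^i$, via the continuity of $s_*$, the corresponding splitting of $\iota$, and $d_B^i$ on cochains) are continuous because they descend from continuous maps on cocycles. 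Your approach re-derives the long exact sequence level-by-level and reassembles it, which can be made to work but is more machinery than needed and is where the $M/JM$ slip creeps in.
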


\begin{proof}
	The existence of $s$ in the case of $\C_{\La,G}$ is just \cite[Proposition 2.2.2]{rz}.  That
	$\partial^i$ is a homomorphism of $\La$-modules and that the sequence is exact are
	standard.    That the cohomology groups
	lie in the stated categories are Lemma \ref{cochdisc} and Proposition \ref{topctscoh}.
	That the homomorphisms are continuous in the case of $\C_{\La,G}$
	follows 
	quickly
	from Proposition \ref{topctscoh}.  
\end{proof}

Suppose now that $\pi$ has a continuous splitting $s \colon C \to B$ of $\La$-modules.
Then we obtain a $1$-cocycle
$$
	\chi \colon G \to \Hom_{\La,\cts}(C,A)
$$
given by
\begin{equation} \label{cocycle}
	\chi(\sigma)(c) = \sigma s(\sigma^{-1}c) - s(c)
\end{equation}
for $c \in C$, the class of which is independent of $s$.  
Conversely, the Galois action on $B$ is prescribed by the cocycle using \eqref{cocycle}.  As usual, $s$ gives rise to a $\La$-module splitting 
$t \colon B \to A$ of $\iota$ such that $\iota \circ t + s \circ \pi$ is the identity.

The following is well-known (see, e.g., \cite[Proposition 3]{flood}).

\begin{lemma}
	Endow $\Hom_{\La,\cts}(C,A)$ with the compact-open topology.
	\begin{enumerate}
		\item[a.] The group $\Hom_{\La,\cts}(C,A)$ is an object of $\mc{T}_{R,G}$.
		\item[b.] The canonical evaluation map $C \times \Hom_{\La,\cts}(C,A) \to A$
		is continuous.
		\item[c.] The $1$-cocycle $\chi \colon G \to \Hom_{\La,\cts}(C,A)$ is continuous.
	\end{enumerate}
\end{lemma}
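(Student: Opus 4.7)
The strategy is to prove (b) first, since it will be needed as an input for the continuity of the $G$-action in (a); then (c) follows from a similar argument. Throughout, I would work with the subbasis $W(K,U) = \{f \in \Hom_{\La,\cts}(C,A) : f(K) \subseteq U\}$ for the compact-open topology, where $K \subseteq C$ is compact and $U \subseteq A$ is open. In the discrete case $\mc{D}_{\La,G}$, compact subsets of $C$ are finite and singletons of $A$ are open, so the topology reduces to that of pointwise convergence; in the compact case $\mc{C}_{\La,G}$, the module $C$ is itself compact. In either setting, $C$ is locally compact Hausdorff, which will be the key technical input.

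For (b), joint continuity of evaluation $C \times \Hom_{\La,\cts}(C,A) \to A$ follows from the classical theorem that evaluation is continuous for the compact-open topology whenever the source is locally compact Hausdorff. For (a), continuity of addition, negation, and $R$-scalar multiplication comes from a routine neighborhood chase at $0$: given $W(K,U)$, pick an open $U' \ni 0$ in $A$ with $U' + U' \subseteq U$ and use $W(K,U')$, with an analogous argument for scaling using continuity of the $R$-action on $A$ together with compactness of $K$. For continuity of the $G$-action $(\sigma, f) \mapsto \sigma \cdot f$, where $(\sigma \cdot f)(c) = \sigma f(\sigma^{-1} c)$, the key observation is that the ternary map $G \times \Hom_{\La,\cts}(C,A) \times C \to A$, $(\sigma, f, c) \mapsto \sigma f(\sigma^{-1} c)$, is continuous: it factors as the action $G \times C \to C$, the evaluation of (b), and the action $G \times A \to A$. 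Given $W(K,U) \ni \sigma_0 \cdot f_0$, for each $c \in K$ there are open neighborhoods $N_c \ni \sigma_0$, $V_c \ni f_0$, $O_c \ni c$ mapping into $U$; compactness of $K$ yields a finite subcover by the $O_{c_i}$, and intersecting the corresponding $N_{c_i}$ and $V_{c_i}$ provides the desired neighborhood of $(\sigma_0, f_0)$.

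For (c), the map $\Phi \colon G \times C \to A$ given by $\Phi(\sigma, c) = \sigma s(\sigma^{-1} c) - s(c) = \chi(\sigma)(c)$ is continuous, being a composite and difference of the continuous $G$-actions on $B$ and $C$, the continuous section $s$, and subtraction on $A$. Since $\Phi(e, c) = 0$ for all $c$, the same compactness argument as for the $G$-action in (a)---covering $K$ by finitely many $O_{c_i}$ on which a suitable $N_{c_i} \ni e$ forces the image into $U$---shows that $\chi$ is continuous at the identity. Continuity at an arbitrary $\sigma_0$ then follows from the cocycle identity $\chi(\sigma_0 \tau) = \chi(\sigma_0) + \sigma_0 \cdot \chi(\tau)$ combined with the continuity of the $G$-action on $\Hom_{\La,\cts}(C,A)$ just established in (a). The main technical obstacle is the uniform treatment of the compact and discrete cases; beyond this, the only real subtlety is that the order of proof matters, since part (b) feeds into part (a).
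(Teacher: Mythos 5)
Your proof is correct and well organized.  Worth noting up front: the paper does not actually prove this lemma at all --- it labels it ``well-known'' and cites Flood, {\it Pontryagin duality for topological modules}, Proposition~3.  So there is no in-text argument to compare against; what you have supplied is a self-contained verification that the paper deliberately omits.

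The route you take is the natural one: prove continuity of evaluation first via the classical fact that $\mathrm{ev}\colon X \times \Maps(X,Y) \to Y$ is continuous for the compact-open topology whenever $X$ is locally compact Hausdorff (applicable here since $C$ is compact in the $\mc{C}_{\La,G}$ case and discrete in the $\mc{D}_{\La,G}$ case), then feed this into the continuity of the $G$-action via the ternary map $(\sigma,f,c)\mapsto \sigma f(\sigma^{-1}c)$ and a finite-subcover argument on the compact set $K$, and finally derive continuity of $\chi$ from continuity of $\Phi(\sigma,c)=\sigma s(\sigma^{-1}c)-s(c)$ at the identity plus the cocycle identity $\chi(\sigma_0\tau)=\chi(\sigma_0)+\sigma_0\cdot\chi(\tau)$ and part~(a).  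This ordering --- (b), then (a), then (c) --- is indeed forced, as you observe.  Two small points you leave tacit but should at least flag: (i) one must check that each $\sigma\cdot f$ and each $\chi(\sigma)$ actually lies in $\Hom_{\La,\cts}(C,A)$ (routine, using that the $G$- and $\La$-actions commute and that $s$ is $\La$-linear); and (ii) for $\Phi$ to land continuously in $A$ rather than merely in $B$ one uses that $\iota\colon A\to B$ is a topological embedding, which holds in both the compact-Hausdorff and discrete settings.  Neither is a gap, just bookkeeping worth making explicit since the lemma is being proved from scratch rather than cited.
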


We wish to compare our connecting homomorphisms with cup products. 
By Lemma \ref{cupprod}, we have continuous cup products
$$
	C^i(G,\Hom_{\La,\cts}(C,A)) \times C^j(G,C)
	\xrightarrow{\cup} C^{i+j}(G,A)
$$
that satisfy $\lambda(\rho \cup f) = \rho \cup \lambda f$ for all $\lambda \in \La$, 
$\rho \in C^i(G,\Hom_{\La,\cts}(C,A))$, and $f \in C^j(G,C)$.
For $\rho = \chi$ (with $i = 1$), the cup product $\chi \cup f \in C^{j+1}(G,A)$
may be described explicitly by the $(j+1)$-cochain
$$
	(\chi \cup f)(\sigma,\tau) = \chi(\sigma) \sigma f(\tau)
$$
for $\sigma \in G$ and $\tau$ a $j$-tuple in $G$.  We set $\tilde{\chi}(f) = \chi \cup f$.  

\begin{remark}
	For a cochain complex $X$ and $j \in \Z$, the complex $X[j]$ is that with $X[j]^i = X^{i+j}$
	and differential $d_{X[j]}$ on $X[j]$ that is $(-1)^j$ times the differential $d_X$ on $X$.  
\end{remark}

Since $d(\chi \cup f) = (-1)^j\chi \cup df$, we have maps of sequences of compact or discrete $\La$-modules 
$$
	 \tilde{\chi} \colon C(G,C) \to C(G,A)[1]
$$
The map $\tilde{\chi}$ commutes with the differentials of these complexes up to the sign $(-1)^{j+1}$ in degree $j$.  The map $\tilde{\chi}$ may be made into a map of of complexes by adjusting the signs, but we have no cause to do this here.

\begin{lemma} \label{cupprodconn}
	For all $i \ge 0$, the cup product map $\tilde{\chi}^i \colon H^i(G,C) \to H^{i+1}(G,A)$ agrees with the 
	connecting homomorphism $\partial^i$.
\end{lemma}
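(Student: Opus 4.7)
The plan is to verify the equality at the level of cochains, by showing that for any cocycle $f \in C^i(G,C)$ the cochains $\iota^{-1} \circ d^i_B(s \circ f)$ and $\chi \cup f$ coincide in $C^{i+1}(G,A)$. Note that $d^i_B(s \circ f)$ takes values in $\ker \pi = \iota(A)$, since $\pi$ is $G$-equivariant, $\pi \circ s = \mr{id}_C$, and $d^i_C f = 0$, so $\iota^{-1}$ is applicable to it. By the standard independence-of-splitting argument underlying the previous lemma, $\partial^i$ may be computed using the given continuous $\La$-linear splitting $s$, even though the definition stated in the excerpt allows only continuous set-theoretic splittings.

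First I would expand $d^i_B(s \circ f)(\sigma, \tau_1, \ldots, \tau_i)$ via the inhomogeneous coboundary formula. Since $s$ is $\La$-linear, in particular additive, every term of $d^i_B(s \circ f)$ agrees with the corresponding term of $s(d^i_C f)$ except for the leading $G$-action term, where the discrepancy is precisely $\sigma \cdot s(f(\vec{\tau})) - s(\sigma \cdot f(\vec{\tau}))$. Thus
$$
	d^i_B(s \circ f)(\sigma, \vec{\tau}) - s\bigl(d^i_C f(\sigma, \vec{\tau})\bigr) = \sigma s\bigl(f(\vec{\tau})\bigr) - s\bigl(\sigma f(\vec{\tau})\bigr),
$$
and the cocycle condition $d^i_C f = 0$ then gives $d^i_B(s \circ f)(\sigma, \vec{\tau}) = \sigma s(f(\vec{\tau})) - s(\sigma f(\vec{\tau}))$.

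Setting $c = \sigma f(\vec{\tau})$ in the defining formula $\iota(\chi(\sigma)(c)) = \sigma s(\sigma^{-1} c) - s(c)$ recognizes the right-hand side as $\iota(\chi(\sigma)(\sigma f(\vec{\tau})))$. Applying $\iota^{-1}$ and invoking the explicit cup product formula $(\chi \cup f)(\sigma, \tau) = \chi(\sigma) \cdot \sigma f(\tau)$ from the excerpt yields
$$
	\iota^{-1}\bigl(d^i_B(s \circ f)\bigr)(\sigma, \vec{\tau}) = \chi(\sigma) \cdot \sigma f(\vec{\tau}) = (\chi \cup f)(\sigma, \vec{\tau}),
$$
which is the required equality. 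I do not anticipate any real obstacle in this argument; it is essentially a bookkeeping check of the inhomogeneous cochain formulas, whose only substantive inputs are that $s$ is additive (so that it commutes with the purely algebraic portion of the coboundary) and that the $G$-action on $B$ is the one dictated by $\chi$.
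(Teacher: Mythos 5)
Your proof is correct and takes essentially the same approach as the paper's: both compute $d^i_B(s\circ f)$ explicitly against the cocycle formula defining $\chi$ and match it term-by-term with $(\chi\cup f)(\sigma,\vec\tau) = \chi(\sigma)\,\sigma f(\vec\tau)$. The paper states the identity in one line, whereas you usefully spell out the intermediate steps (that $d^i_B(s\circ f)$ lands in $\iota(A)$, and that using the $\La$-linear $s$ to compute $\partial^i$ is justified by independence of splitting), but the underlying argument is identical.
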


\begin{proof}
	Let $f$ be an $i$-cocycle on $G$ with values in $C$, and set $g = s \circ f$.
	The coboundary of $g$ has values in $A$, and one sees immediately that
	$$
		dg(\sigma,\tau) 
		= \sigma s(f(\tau)) - s(\sigma f(\tau)) = \chi(\sigma)(\sigma f(\tau)) = 
		(\chi \cup f)(\sigma,\tau)
	$$
	for $\sigma \in G$ and $\tau$ an $i$-tuple of elements of $G$. 
\end{proof}

If $X \in \mc{T}_{\La,G}$ is locally compact, then its Pontryagin dual $X^{\vee} = \Hom_{\cts}(X,\R/\Z)$ with $G$-action given by $(g\phi)(x) = \phi(g^{-1}x)$ for $g \in G$, $x \in X$, and $\phi \in X^{\vee}$ is a locally compact object of $\mc{T}_{\Lo,G}$, where $\Lo$ denotes the opposite ring to $\La$.  Pontryagin duality induces an exact equivalence of categories
between $\mc{C}_{\La,G}$ and $\mc{D}_{\Lo,G}$.

Now suppose that the exact sequence \eqref{extension} is of modules in $\C_{\La,G}$,
and suppose $A$ and $C$ are endowed with the $\mc{I}$-adic topology.
The dual of \eqref{extension} fits in an exact sequence
\begin{equation} \label{dualextension}
	0 \to C^{\vee} \to B^{\vee} \to A^{\vee} \to 0
\end{equation}
in $\D_{\Lo,G}$.
The canonical isomorphism $\Hom_{\La}(C,A) \cong \Hom_{\Lo}(A^{\vee},C^{\vee})$ 
(see Proposition \ref{dualhom}) then produces a continuous $1$-cocycle
$$
	\chi^* \colon G \to \Hom_{\Lo}(A^{\vee},C^{\vee}),
$$
from $\chi$.
A direct computation shows that $\chi^*$ satisfies
$$
	\chi^*(\sigma)(\phi) = - \sigma t^*(\sigma^{-1} \phi) + t^*(\phi)
$$
for $\phi \in A^{\vee}$ and $t^* \colon A^{\vee} \to B^{\vee}$ 
the map associated to $t$.
That is, the cocycle $-\chi^*$ defines the class of the dual extension 
\eqref{dualextension} in 
$H^1(G,\Hom_{\Lo}(A^{\vee},C^{\vee}))$.  By Lemma \ref{cupprodconn},
we therefore have the following.

\begin{lemma}
	The cup product map $\widetilde{\chi^*}{}^i$ is the negative of the
	connecting homomorphism $H^i(G,A^{\vee}) \to H^{i+1}(G,C^{\vee})$.
\end{lemma}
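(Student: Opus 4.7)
The plan is a direct application of Lemma \ref{cupprodconn} to the dual extension \eqref{dualextension}. The paragraph immediately preceding the statement of the lemma has already done the essential calculation: taking the Pontryagin dual of the $\La$-linear splitting $t \colon B \to A$ of $\iota$ produces a continuous $\Lo$-linear splitting $t^* \colon A^{\vee} \to B^{\vee}$ of $\iota^{\vee}$, and the $1$-cocycle associated to \eqref{dualextension} and the splitting $t^*$ via the formula \eqref{cocycle} is exactly $-\chi^*$. Thus $-\chi^*$ represents the extension class of \eqref{dualextension} in $H^1(G, \Hom_{\Lo,\cts}(A^{\vee}, C^{\vee}))$.

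I would then invoke Lemma \ref{cupprodconn} with \eqref{extension} replaced by \eqref{dualextension} and the cocycle $\chi$ replaced by $-\chi^*$. This identifies the connecting homomorphism $\partial^i \colon H^i(G, A^{\vee}) \to H^{i+1}(G, C^{\vee})$ with the cup product map $\widetilde{-\chi^*}{}^i$. Since cup product is $\La$-bilinear in the cocycle, $\widetilde{-\chi^*}{}^i = -\widetilde{\chi^*}{}^i$, and rearranging gives $\widetilde{\chi^*}{}^i = -\partial^i$, as claimed.

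The only point requiring a modest check is that the hypotheses of Lemma \ref{cupprodconn} apply in the dual setting: the sequence \eqref{dualextension} lies in $\mc{D}_{\Lo,G}$, it admits a continuous $\Lo$-linear splitting (namely $t^*$), and the cocycle $-\chi^*$ is continuous when $\Hom_{\Lo,\cts}(A^{\vee},C^{\vee})$ is equipped with the compact-open topology. Continuity of $t^*$ comes from functoriality of Pontryagin duality applied to the continuous map $t$, while continuity of $\chi^*$ is immediate from its definition as the image of $\chi$ under the duality isomorphism $\Hom_{\La,\cts}(C,A) \cong \Hom_{\Lo,\cts}(A^{\vee}, C^{\vee})$ of Proposition \ref{dualhom}. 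There is no substantive obstacle here; the entire content of the lemma is the sign, and this sign is forced by the convention $(g\phi)(x) = \phi(g^{-1}x)$ used to define the $G$-action on Pontryagin duals, which is precisely what produces the minus sign in the formula for $\chi^*$ derived just before the lemma.
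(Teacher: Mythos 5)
Your proposal is correct and follows exactly the route the paper takes: it observes, as computed just before the lemma, that $-\chi^*$ is the $1$-cocycle associated via \eqref{cocycle} to the dual sequence \eqref{dualextension} with the splitting $t^*$, and then applies Lemma \ref{cupprodconn} to identify the connecting homomorphism with $\widetilde{-\chi^*}{}^i = -\widetilde{\chi^*}{}^i$. The only difference is that you spell out the sanity checks (continuity of $t^*$ and of $\chi^*$, bilinearity of the cup product in the cocycle), which the paper leaves implicit.
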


\subsection{Global fields} \label{global}

Let $p$ be a prime number, and now suppose that $\La$ is a pro-$p$ ring.
Let $F$ be a global field of characteristic not equal to $p$, and let $S$ denote a finite set of primes of $F$ including those above $p$ and any real places of $F$.   Let $S^f$ (resp., $S^{\infty}$) denote the
set of finite (resp., real) places in $S$.
We use $G_{F,S}$ to denote the Galois group of the maximal unramified outside $S$ extension of $F$.  
Fix, once and for all, a local embedding of the algebraic closure of $F$ at a prime above each $v \in S$ and therefore a homomorphism $j_v \colon G_{F_v} \to G_{F,S}$, where $G_{F_v}$ denotes the absolute
Galois group of the completion $F_v$ of $F$ at the prime over $v$.  

For $M \in \mc{T}_{\La,G_{F,S}}$, we consider the direct sum of local cochain complexes
\begin{equation} \label{local_coch}
	C_{l}(G_{F,S},M) = \bigoplus_{v \in S^f} C_{}(G_{F_v},M) \oplus \bigoplus_{v \in S^{\infty}} 
	\widehat{C}(G_{F_v},M),
\end{equation}
where $\widehat{C}(G_{F_v},M)$ denotes the total complex of the Tate complex of continuous
cochains for $v \in S^{\infty}$. 

We will be interested in a shift of a usual cone complex.  To that end, recall that if $X$ and $Y$ are cochain complexes and 
$C = \Cone(f \colon X \to Y)[-1]$, then $C = X \oplus Y[-1]$ with differential
$$
		d_C(x,y) = (d_X(x),-f(x)-d_Y(y)).
	$$
	
We define the (modified) continuous compactly supported cochain complex of $M \in \mc{T}_{\La,G_{F,S}}$ by
\begin{equation} \label{c-supp_coch}
	C_c(G_{F,S},M) = \Cone(C(G_{F,S},M) \xrightarrow{\ell_S} C_l(G_{F,S},M))[-1],
\end{equation}
where $\ell_S = (\ell_v)_{v \in S}$ is the sum of the localization maps $\ell_v$ defined by 
$\ell_v(f) = f \circ j_v$.  
The $i$th cohomology group of $C_c(G_{F,S},M)$ (resp., $C_l(G_{F,S},M)$) 
is denoted $H^i_c(G_{F,S},M)$ (resp., $H^i_l(G_{F,S},M)$).   Note that if $M$ is finite, then
$H^i_l(G_{F,S},M)$ and $H^i(G_{F,S},M)$ are finite for all $i \in \Z$, 
and as a result, so are the $H^i_c(G_{F,S},M)$. 
 
For $T \in \C_{\La,G_{F,S}}$, we may then view each $H^i_*(G_{F,S},T)$ as an object of $\C_{\La}$,
where $*$ denotes either no symbol, $l$, or $c$.
We then have a long exact sequence
$$
	\cdots \to H^i_c(G_{F,S},T) \to H^i(G_{F,S},T) \to H^i_l(G_{F,S},T) \to H^{i+1}_c(G_{F,S},T) \to \cdots
$$
in $\C_{\La}$.  
Similarly, we may view each $H^i_*(G_{F,S},A)$ for $A \in \D_{\La,G_{F,S}}$ as an object of $\D_{\La}$,
and we obtain a corresponding long exact sequence for such an $A$.

Suppose that 
$$
	\phi \colon M \times N \to L
$$
for $M \in \mc{T}_{\La,G_{F,S}}$, $N \in \mc{T}_{\Lo,G_{F,S}}$, and $L \in \mc{T}_{R,G_{F,S}}$
is a continuous, $\La$-balanced, 
$G_{F,S}$-equivariant homomorphism. (Recall that $\phi$ being $\La$-balanced means that, 
viewing $M$ as a right $\La$-module, we have $\phi(m\lambda,n) = \phi(m,\lambda n)$ for all 
$m \in M$, $n \in N$, and $\lambda \in \La$.)
Similarly to Lemma \ref{cupprod}, we have compatible continuous, $\La$-balanced cup products 
\begin{eqnarray*}
	&C^i_l(G_{F,S},M) \times C^j_l(G_{F,S},N) \xrightarrow{\cup_l} C^{i+j}_l(G_{F,S}, L)&\\
	&C^i(G_{F,S},M) \times C^j_c(G_{F,S},N) \xrightarrow{\cup_c} C^{i+j}_c(G_{F,S}, L)&\\
	&C^i_c(G_{F,S},M) \times C^j(G_{F,S},N) \xrightarrow{{}_c\cup} C^{i+j}_c(G_{F,S},L)&
\end{eqnarray*}
of $R$-modules for all $i, j \in \Z$ (see \cite[Section 5.7]{nekovar}).  As in Lemma \ref{cupprod},
if we introduce bimodule structures on $M$, $N$, and $L$ that commute with the $G_{F,S}$-action, 
and if the pairing $\phi$ is also linear for the new left and right actions on $M$ and $N$, respectively, 
then the cup products are likewise linear for them.

Class field theory yields that $H^i_c(G_{F,S},T)$ (and $H^i_c(G_{F,S},A)$)
are trivial for $i > 3$ and
$$
	H^3_c(G_{F,S},\qp/\zp(1)) \xrightarrow{\sim} \qp/\zp.
$$
Taking the $\La$-balanced pairing $T^{\vee} \times T(1) \to \qp/\zp(1)$,
cup products then give rise to compatible isomorphisms in $\C_{\La}$
of Tate and Poitou-Tate duality (see \cite[Theorem 4.2.6]{lim}):
\begin{eqnarray*}
	& \beta_{l,T}^i \colon H^i_l(G_{F,S},T(1)) \to 
	H^{2-i}_l(G_{F,S},T^{\vee})^{\vee}\\
	& \beta_{T}^i \colon H^i(G_{F,S},T(1)) \to 
	H^{3-i}_{c}(G_{F,S},T^{\vee})^{\vee}\\
	& \beta_{c,T}^i \colon 
	H^i_{c}(G_{F,S},T(1)) \to  H^{3-i}(G_{F,S},T^{\vee})^{\vee}.
\end{eqnarray*}

\subsection{Connecting maps and cup products} \label{conncup2}

Suppose that we are given an exact sequence of compact or discrete modules 
as in \eqref{extension} (with $G = G_{F,S}$) and a continuous splitting $s \colon C \to B$ of $\La$-modules.  
Let
$\chi \colon G_{F,S} \to \Hom_{\La,\cts}(C,A)$
be the induced continuous $1$-cocycle.

We wish to define a certain Selmer complex for $B$ under the assumption that the class of $\chi$ is locally trivial in the sense that $\ell_v(\chi)$ is a $G_{F_v}$-coboundary for each $v \in S$.   For this, we need the following lemma, the proof of which is straightforward and left to the reader.

\begin{lemma} \label{gesplit} 
	Fix a continuous homomorphism $\varphi_v \colon C \to A$ of $\La$-modules.  Give $A$, $B$, and $C$
	the $G_{F_v}$-actions induced by $j_v$ and their $G_{F,S}$-actions.  The following are equivalent:
	\begin{enumerate}
		\item[(i)] $\ell_v(\chi) = d\varphi_v$,
		\item[(ii)] $s - \iota \circ \varphi_v$ is $G_{F_v}$-equivariant,
		\item[(iii)] $t + \varphi_v \circ \pi$ is $G_{F_v}$-equivariant.
	\end{enumerate}
\end{lemma}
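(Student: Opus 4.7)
The plan is to unwind the definitions: each of (i), (ii), (iii) is a different way of encoding that the restriction $\ell_v(\chi)$ is a coboundary, with the coboundary controlled by $\varphi_v$. The equivalences reduce to short algebraic manipulations that use only the splitting identity $\iota \circ t + s \circ \pi = \mr{id}_B$ together with the $G_{F_v}$-equivariance of $\iota$ and $\pi$ and the injectivity of $\iota$.

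First I would verify (i) $\Leftrightarrow$ (ii). Substituting the formula \eqref{cocycle} for $\chi$ into (i) and using that $\iota$ is $G_{F_v}$-equivariant, condition (i) becomes
$$
\sigma s(\sigma^{-1} c) - s(c) = \sigma \iota\varphi_v(\sigma^{-1} c) - \iota\varphi_v(c)
$$
for all $\sigma \in G_{F_v}$ and $c \in C$. Rearranging, this is the same as the identity $\sigma (s - \iota\varphi_v)(\sigma^{-1} c) = (s - \iota\varphi_v)(c)$, which is precisely the $G_{F_v}$-equivariance of $s - \iota\varphi_v \colon C \to B$, namely (ii). The map $s - \iota\varphi_v$ is automatically continuous and $\La$-linear by the hypotheses on $s$ and $\varphi_v$, so equivariance is the only content.

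For (ii) $\Leftrightarrow$ (iii), I would apply $\iota$ to $t + \varphi_v \circ \pi \colon B \to A$ and use $\iota \circ t = \mr{id}_B - s \circ \pi$ to obtain
$$
\iota \circ (t + \varphi_v \circ \pi) = \mr{id}_B - (s - \iota\varphi_v) \circ \pi.
$$
Since $\iota$ is injective and $G_{F_v}$-equivariant, $t + \varphi_v \circ \pi$ is $G_{F_v}$-equivariant if and only if the right-hand side is. Given that $\mr{id}_B$ is equivariant, this is equivalent to the equivariance of $(s - \iota\varphi_v) \circ \pi$; and because $\pi$ is a $G_{F_v}$-equivariant surjection, that in turn is equivalent to the equivariance of $s - \iota\varphi_v$, i.e.\ to (ii).

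No real obstacle presents itself; the lemma is a pure book-keeping exercise. The only care needed is to track the implicit uses of $\iota$ and to note that the $\La$-linearity of the maps in (ii) and (iii) follows automatically, so that the substantive content of each condition is exactly the $G_{F_v}$-equivariance. The argument is identical in $\C_{\La,G}$ and $\D_{\La,G}$.
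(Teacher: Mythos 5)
Your proof is correct. The paper does not actually supply an argument here — it declares the lemma ``straightforward and left to the reader'' — so there is nothing to compare against; your verification, using the cocycle formula~\eqref{cocycle}, the splitting identity $\iota\circ t + s\circ\pi = \mathrm{id}_B$, and the injectivity and equivariance of $\iota$ together with the surjectivity and equivariance of $\pi$, is exactly the intended unwinding.
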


For each $v \in S$, assume that $\ell_v(\chi)$ is a coboundary, and choose $\varphi_v$ as in Lemma \ref{gesplit}.  Note that the following construction can be affected by these choices, even on cohomology.  If $\Hom_{\La,\cts}(C,A)$ has trivial
$G_{F,S}$-action, then it is always possible to choose $\varphi_v = 0$.
We then define 
$$
	C_{f}(G_{F,S},B) = 
	\Cone(C_{}(G_{F,S},B) \xrightarrow{t_S} 
	C_{l}(G_{F,S},A))[-1],
$$ 
where $t_S$ is defined as the composition of $\ell_S$ with the map of complexes determined by $t_v = t + \varphi_v \circ \pi$ in its $v$-coordinate.  In that $\iota$ is locally split by $t_v$, the map induced by $t_S$ on cohomology is independent of the choices of $\varphi_v$.  

It should be noted that the complex $C_{f}(G_{F,S},B)$ depends in general upon the choices of local embeddings.  In fact, we have the following lemma.

\begin{lemma} \label{selcho}
	For $\sigma \in G_{F,S}$, consider the homomorphisms $j'_v$ for $v \in S$ 
	that are defined on $\tau \in G_{F_v}$ by
	$j'_v(\tau) = \sigma j_v(\tau) \sigma^{-1}$, 
	and form the corresponding localization maps $\ell_v'$.  Define $\varphi_v' \colon C \to A$
	on $c \in C$ by
	$$
		\varphi_v'(c) = \sigma \varphi_v(\sigma^{-1}c) - \chi(\sigma)(c),
	$$
	and let $t_S'$ be the map induced in the $v$-coordinate by the composition
	$t'_v \circ \ell_v'$, where $t'_v = t + \varphi_v' \circ \pi$.
	Then the diagram
	$$
		 \xymatrix{
			C_{}(G_{F,S},B) \ar[r]^-{t_S} \ar[d]^{\sigma^*} &
			C_{l}(G_{F,S},A) \ar[d]^{\sigma} \\
			C_{}(G_{F,S},B) \ar[r]^-{t_S'} &
			C_{l}(G_{F,S},A) 
		}
	$$
	of maps of complexes commutes.  
	Here, $\sigma^*$ is the isomorphism induced by the standard action of $\sigma$ on cochains,
	and the right-hand vertical map is induced by the action of $\sigma$ on coefficients,
	where the action of $G_{F_v}$ on $A$ 
	is understood to be attained through $j_v$ on the upper
	row and $j_v'$ on the lower.
\end{lemma}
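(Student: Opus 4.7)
The plan is to verify the commutativity of the square pointwise on cochains, working component-by-component in the direct sum defining $C_l(G_{F,S},A)$. Since $t_S$ is the composition of $\ell_S$ with the local map given in the $v$-coordinate by $t_v = t+\varphi_v\circ\pi$, and similarly for $t_S'$, it suffices to show for each $v \in S$ and each cochain $f$ that
$$
\sigma\cdot\bigl(t_v(f\circ j_v)\bigr)\;=\;t_v'\bigl((\sigma^*f)\circ j_v'\bigr).
$$
The standard formula $(\sigma^* f)(g_1,\ldots,g_n) = \sigma\cdot f(\sigma^{-1}g_1\sigma,\ldots,\sigma^{-1}g_n\sigma)$ together with $j_v'(\tau)=\sigma j_v(\tau)\sigma^{-1}$ immediately yields $(\sigma^*f)\circ j_v' = \sigma\cdot(f\circ j_v)$, so after setting $b=(f\circ j_v)(\tau)\in B$ the identity to verify becomes the purely pointwise statement
$$
t_v'(\sigma\cdot b)\;=\;\sigma\cdot t_v(b)\qquad\text{for every } b\in B.
$$

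Next I would expand both sides using the definitions of $t_v$ and $t_v'$ and try to isolate the failure of $t$ itself to be $G_{F,S}$-equivariant. Writing $b = \iota(t(b)) + s(\pi(b))$ and using the cocycle formula $\chi(\sigma)(c) = \sigma s(\sigma^{-1}c) - s(c)$ to replace $\sigma\cdot s(\pi(b))$ by $s(\sigma\cdot\pi(b)) + \iota\bigl(\chi(\sigma)(\sigma\cdot\pi(b))\bigr)$, one reads off
$$
t(\sigma\cdot b)\;=\;\sigma\cdot t(b) + \chi(\sigma)\bigl(\sigma\cdot\pi(b)\bigr),
$$
using $t\circ\iota = \mr{id}_A$ and $t\circ s = 0$. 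Substituting this into the displayed equation and cancelling the common $\sigma\cdot t(b)$ reduces the problem to the identity
$$
\varphi_v'(\sigma\cdot c) \;=\; \sigma\cdot\varphi_v(c) - \chi(\sigma)(\sigma\cdot c)\qquad (c\in C),
$$
and upon replacing $c$ by $\sigma^{-1}c$ this is precisely the definition of $\varphi_v'$ given in the statement.

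The commutativity of the square at the level of complexes then follows because the verification above depends only on the value of $f$ at $j_v(\tau)$ and commutes with the differentials of $C(G_{F_v},A)$ (the right-hand vertical map is induced by an endomorphism of the coefficient module, hence is a map of complexes, and $\sigma^*$ is a chain map by standard properties of the action of $G$ on its own cochain complex). There is no real obstacle here beyond careful bookkeeping: one must keep straight the two different $G_{F_v}$-actions on $A$ (through $j_v$ on the top row, through $j_v'$ on the bottom), and apply the explicit cocycle formula for $\chi$ in the correct form. The definition of $\varphi_v'$ has been reverse-engineered precisely so that this bookkeeping produces an identity, and once the reduction to a pointwise statement in $A$ has been made, the computation is a one-line application of the cocycle relation.
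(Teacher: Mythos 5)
Your core computation is correct and follows essentially the same route as the paper: reduce to a pointwise statement in $A$, expand $t_v$ and $t_v'$, and absorb the failure of $t$ to be equivariant into the cocycle identity defining $\varphi_v'$. The identity $t(\sigma b) = \sigma t(b) + \chi(\sigma)(\sigma\pi(b))$ that you derive is precisely the heart of the paper's computation (the paper organizes it slightly differently by working with $\iota \circ t_v'$ rather than with $t$ directly, but the algebra is the same).

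However, you have elided one thing the paper addresses explicitly and that the statement of the lemma requires: the assertion is that the diagram is one \emph{of maps of complexes}, so before doing the commutativity check one must know that $t_S'$ is a chain map at all. That requires $t'_v = t + \varphi_v' \circ \pi$ to be $G_{F_v}$-equivariant for the action via $j'_v$, which is not automatic from the definition of $\varphi_v'$ and is verified separately in the paper (via Lemma \ref{gesplit}, by checking the equivariance of $s'_v = s - \iota\varphi'_v$). Your closing paragraph notes that $\sigma^*$ and the coefficient map $\sigma$ are chain maps, but never addresses $t_S'$. There is an easy repair: since $t_S$, $\sigma^*$, and the right-hand $\sigma$ are chain maps and your computation shows $t_S' \circ \sigma^* = \sigma \circ t_S$ as maps of graded modules, it follows that $t_S' = \sigma \circ t_S \circ (\sigma^*)^{-1}$ commutes with differentials. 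You should state this (or verify equivariance of $t'_v$ directly) so that the claim ``of maps of complexes'' is actually established rather than tacitly assumed.
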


\begin{proof}
	Let $v \in S$ and $\tau \in G_{F_v}$.  
	We claim first that the above choice of $\varphi'_v$ makes $t'_v$ into a $G_{F_v}$-equivariant map for the 
	$G_{F_v}$-actions induced by $j'_v$.  By Lemma \ref{gesplit}, it suffices to check the Galois-equivariance
	of $s'_v = s-\iota \varphi'_v$, given the Galois-equivariance of $s_v = s-\iota\varphi_v$.  Set $\tau'_v = j'_v(\tau)$ and 
	$\tau_v = j_v(\tau)$.   As maps on $C$, we have
	$$
		s'_v\tau'_v = s\tau'_v - \iota(\sigma\varphi_v\sigma^{-1}\tau'_v - \chi(\sigma)\tau'_v)
		= \sigma s_v\sigma^{-1}\tau'_v
		= \sigma s_v\tau_v\sigma^{-1}
		= \tau'_v \sigma s_v\sigma^{-1},
	$$
	and we then check
	$$
		\sigma s_v \sigma^{-1} = \sigma s \sigma^{-1} - \iota\sigma\varphi_v\sigma^{-1}
		= s + \iota \chi - \iota\sigma \varphi_v \sigma^{-1} = s'_v.
	$$

	We verify commutativity of the diagram.
	For $f \in C^i(G_{F,S},B)$, and now taking $\tau$ to be an $i$-tuple in $G_{F_v}$, we have
	$$
		(t'_v \circ \ell'_v \circ \sigma^*(f))(\tau)
		= (t'_v \circ \sigma^*(f))(\sigma \tau_v\sigma^{-1})
		= (t \sigma + \sigma \varphi_v  \pi - \chi(\sigma)\sigma \pi )(f(\tau_v)),
	$$
	and $\iota$ applied to the latter value is the following map applied to $f(\tau_v)$:
	$$
		\iota t \sigma + \sigma \iota\varphi_v  \pi 
		+ s  \pi \sigma - \sigma s  \pi
		= \sigma \circ (1-s\pi +\iota\varphi_v\pi)
		= \sigma \circ (\iota t + \iota\varphi_v\pi)
		= \iota \circ \sigma \circ t_v.
	$$
	In other words, we have
	$$
		(t'_v \circ \ell'_v \circ \sigma^*(f))(\tau) = (\sigma \circ t_v) (f(\tau_v))
		= (\sigma \circ t_v \circ \ell_v(f))(\tau).
	$$
\end{proof}

In particular, we see that, even when the Galois action on $\Hom_{\La,\cts}(C,A)$ is trivial, the diagram 
in Lemma \ref{selcho} (and the resulting diagram on cohomology) would not have to commute if instead we took our canonical choices $\varphi_v = \varphi_v' = 0$, since we could have $\chi(\sigma) \neq 0$.  In other words, the Selmer complex depends upon our choice of local embeddings. 

For later purposes, it is notationally convenient  for us to work with the Tate twist of the sequence \eqref{extension}.  We have the following exact sequences, with the third requiring the assumption that
each $\ell_v(\chi)$ is a coboundary:
\begin{eqnarray}
	&0 \to C_{}(G_{F,S},A(1)) \to C_{}(G_{F,S},B(1)) \to C_{}(G_{F,S},C(1)) \to 0& \\
	&0 \to C_{c}(G_{F,S},A(1)) \to C_{c}(G_{F,S},B(1)) \to C_{c}(G_{F,S},C(1)) \to 0 &\\
	& 0 \to C_{c}(G_{F,S},A(1)) \to C_{f}(G_{F,S},B(1)) \to C_{}(G_{F,S},C(1)) \to 0.& 
	\label{Selmer_seq}
\end{eqnarray}
These yield connecting maps
\begin{eqnarray*}
	& \kappa_B^i \colon H^i(G_{F,S},C(1)) \to H^{i+1}(G_{F,S},A(1)) &\\
	& \kappa_{c,B}^i  \colon H^i_c(G_{F,S},C(1)) \to H^{i+1}_{c}(G_{F,S},A(1))  &\\
	& \kappa_{f,B}^i  \colon H^i(G_{F,S},C(1)) \to H^{i+1}_{c}(G_{F,S},A(1)) &
\end{eqnarray*}
of $\La$-modules.

We have continuous cup products for any $i, j \in \Z$, given by
$$
	C^i(G_{F,S},\Hom_{\La,\cts}(C,A)) \times C^j(G_{F,S},C(1))
	\xrightarrow{\cup} C^{i+j}(G_{F,S},A(1)).
$$
As before, setting $\tilde{\chi}(f) = \chi \cup f$,
we obtain a map of sequences of $\La$-modules
$$
	 \tilde{\chi} \colon C_{}(G_{F,S},C(1)) \to C_{}(G_{F,S},A(1))[1] 
$$
that is a map of complexes up to the signs of the differentials.
In a similar manner, we have maps of sequences of $\La$-modules
\begin{eqnarray*}
	&\tilde{\chi}_c \colon  C_{c}(G_{F,S},C(1)) \to C_{c}(G_{F,S},A(1))[1]& \\
	&\tilde{\chi}_f \colon C_{}(G_{F,S},C(1)) \to C_{c}(G_{F,S},A(1))[1]&
\end{eqnarray*}
with the same property,
given explicitly as follows.  First, for $f = (f_1,f_2) \in C_{c}^i(G_{F,S},C(1))$, where
$f_1 \in C_{}^i(G_{F,S},C(1))$ and $f_2 \in C_{l}^{i-1}(G_{F_v},C(1))$, we set
$$
	\tilde{\chi}_c(f) = \chi \cup_c f = (\chi \cup f_1, -\ell_S(\chi) \cup f_2),
$$ 
where the second cup product is the sum of the local cup products at $v \in S$.  In the case that the class of
$\chi$ is locally a coboundary (of $\varphi_v$ at $v \in S$), 
we set $\varphi_S = (\varphi_v)_{v \in S}$ and let
$$
	\chi' = (\chi,-\varphi_S) \in C_{c}^1(G_{F,S},\Hom_{\La,\cts}(C,A))
$$
be the canonical cocycle lifting $\chi$.  For $f \in C_{}^i(G_{F,S},C(1))$, we then define
\begin{equation} \label{chif}
	\tilde{\chi}_f(f) = \chi' {}_c\!\cup f = (\chi \cup f,-\varphi_S \cup \ell_S(f)) \in C_{c}^{i+1}(G_{F,S},A(1)).
\end{equation}

\begin{proposition} \label{equalcup}
	One has $\kappa_B^i = \tilde{\chi}^i$, $\kappa_{c,B}^i = \tilde{\chi}_c^i$, and $\kappa_{f,B}^i = \tilde{\chi}_f^i$ for all $i \in \Z$.
\end{proposition}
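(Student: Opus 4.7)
The first equality $\kappa_B^i = \tilde{\chi}^i$ is a direct application of Lemma \ref{cupprodconn} with $G = G_{F,S}$ and coefficients Tate-twisted (the map $s$ of $\La$-modules induces a continuous splitting $s \otimes \mr{id}$ of $B(1) \twoheadrightarrow C(1)$, and its associated cocycle is still $\chi$, since $G_{F,S}$ acts trivially on $\zp(1)$-valued parts via the tensor). The remaining two equalities reduce to the same type of computation performed through the cone constructions \eqref{c-supp_coch} and the Selmer complex $C_f$.

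For $\kappa_{c,B}^i = \tilde\chi_c^i$: use the splitting of $\pi$ induced termwise by $s$ on both $C(G_{F,S},-)$ and $C_l(G_{F,S},-)$. A cocycle in $C^i_c(G_{F,S},C(1))$ is a pair $(f_1,f_2)$ with $df_1=0$ and $\ell_S(f_1)=-df_2$, and it lifts to $(s_*f_1,s_*f_2)$ in $C^i_c(G_{F,S},B(1))$. Applying the cone differential $d(x,y)=(dx,-\ell_S(x)-dy)$ and using the computation from Lemma \ref{cupprodconn} in each of the two factors gives
$$
d(s_*f_1,s_*f_2) = (\chi\cup f_1,\ -\ell_S(\chi)\cup f_2),
$$
after substituting $\ell_S(s_*f_1)=s_*\ell_S(f_1)=-s_*(df_2)$ and cancelling. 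This pair lies in the image of $\iota$, and taking $\iota^{-1}$ yields exactly $\tilde\chi_c(f_1,f_2)$.

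For $\kappa_{f,B}^i = \tilde\chi_f^i$: identify $C_f(G_{F,S},B(1)) = C(G_{F,S},B(1))\oplus C_l(G_{F,S},A(1))[-1]$ and note that the injection from $C_c(G_{F,S},A(1))$ is $(x,y)\mapsto(\iota x,y)$ — this is a chain map precisely because $t_v\iota = \mr{id}_A$, so $t_S\circ\iota=\ell_S$. Given a cocycle $f\in C^i(G_{F,S},C(1))$, lift to $(s_*f,0)\in C^i_f(G_{F,S},B(1))$. Its Selmer differential is
$$
d(s_*f,0) = (d(s_*f),\ -t_S(s_*f)).
$$
The first entry equals $\chi\cup f$ by the cocycle computation. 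For the second, combine $t_v = t + \varphi_v\pi$ with the identity $\iota t + s\pi = \mr{id}$; applying $t$ to the latter and using surjectivity of $\pi$ yields $ts=0$, whence $t_v\circ s = \varphi_v$ and $t_S(s_*f) = \varphi_S\cup\ell_S(f)$. Pulling back through the injection recovers the pair $(\chi\cup f, -\varphi_S\cup\ell_S(f))$, which is $\tilde\chi_f(f)$ by \eqref{chif}.

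The only subtlety — and the main thing to get right — is bookkeeping: one must verify the signs produced by the cone differentials match those in the definitions of $\tilde\chi_c$ and $\tilde\chi_f$, and that the relevant sequences of complexes are honest short exact sequences with the proposed lifts landing in $C_c(G_{F,S},A(1))$ under $\iota$. Both points follow from the algebraic identities $t\iota=\mr{id}_A$, $\iota t+s\pi = \mr{id}_B$, and $ts=0$, together with the defining property of $\varphi_v$ from Lemma \ref{gesplit}.
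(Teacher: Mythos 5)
Your proof is essentially the paper's own, and the computations for the first and third equalities (and the third one's refinement $t_v\circ s = \varphi_v$ via $ts = 0$) are correct. However, there is a genuine gap in the argument for $\kappa_{c,B}^i = \tilde\chi_c^i$: your cone-differential computation only covers $i \ge 1$. The proposition claims the equality for all $i \in \Z$, and the complex $C_c(G_{F,S},\cdot)$ has nonzero terms in degrees $i \le 0$ because of the Tate complexes $\widehat{C}(G_{F_v},\cdot)$ at real places $v$. For $i = 0$ the component $f_2$ sits in $\widehat{C}^{-1}_l$, and the cochain-level formula $(\chi\cup f_2)(\sigma,\tau) = \chi(\sigma)\sigma f_2(\tau)$ (with $\tau$ an $(i-1)$-tuple) no longer makes literal sense; the cup product in negative Tate degrees has a different description. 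The paper handles this separately: it observes that cocycles in $C^0_c(G_{F,S},C(1))$ are supported on the Tate components at real places, and then uses repeated \emph{right} cup product with a $2$-cocycle generating $\widehat{H}^2(G_{F_v},\zp)$ to shift to degrees $\ge 1$, checking that this dimension shift commutes with both $\kappa_{c,B}$ (compatibility of coboundary with left cup product) and $\tilde\chi_c$ (associativity of cup product). You do not address this, and it is not merely bookkeeping of signs as your final paragraph suggests; without it the proof covers only $i\ge 1$.

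A smaller imprecision: you invoke ``the computation from Lemma \ref{cupprodconn} in each of the two factors,'' but $f_2$ is \emph{not} a cocycle (rather $df_2 = -\ell_S(f_1)$), so the lemma does not apply directly to it. Your subsequent cancellation via $\ell_S(s_*f_1) = -s_*(df_2)$ does rescue the conclusion, but the correct derivation is $d(s_*f_2) = \ell_S(\chi)\cup f_2 + s_*(df_2)$, i.e., Lemma \ref{cupprodconn}'s argument plus the extra term $s_*(df_2)$ coming from $f_2$ failing to be a cocycle. It would be cleaner to state this directly rather than cite the lemma as though it applied verbatim.
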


\begin{proof}
	We treat the cases one-by-one, showing that the maps agree on cocycles.  
	The first equality is Lemma \ref{cupprodconn}.
	For the next, let $f = (f_1,f_2) \in C_{c}^i(G_{F,S},C(1))$ be a cocycle, which is to say that
	$df_1 = 0$ and $df_2 = -\ell_S(f_1)$.  Set $g = (g_1,g_2) = (s \circ f_1, s \circ f_2)$.  
	For $i \ge 1$, one then has that
	$$
		dg_1(\sigma,\tau) = \chi(\sigma)(\sigma f_1(\tau))
	$$
	for $\sigma \in G_{F,S}$ and $\tau$ an $i$-tuple in $G_{F,S}$
	and
	$$
		dg_2(\sigma,\tau) = -\ell_S(g_1)(\sigma,\tau)+\ell_S(\chi)(\sigma)(\sigma f_2(\tau)),
	$$
	where $\sigma \in G_{F_v}$ and $\tau$ is an $(i-1)$-tuple in $G_{F_v}$.
	It follows that
	$$
		dg = (d(g_1),-\ell_S(g_1)-d(g_2)) = (\chi \cup f_1,-\ell_S(\chi) \cup f_2),
	$$
	for $i \ge 1$.  
	
	For $i = 0$, we remark that $Z^0_{}(G_{F,S},C(1)) \to Z^0_{}(G_{F_v},C(1))$
	is injective for any prime $v$, so the only cocycles in $C^0_{c}(G_{F,S},C(1))$
	are sums of cocycles in $\hat{C}^{-1}(G_{F_v},B(1))$ for real places $v$.
	For any $i \le 0$ and any real $v$, we may then use repeated right cup product with a $2$-cocycle 
	with class generating $\hat{H}^2(G_{F_v},\zp)$ to reduce the question to $i \ge 1$, as right
	cup product commutes not only with $\kappa_{c,B}$ by definition of the connecting homomorphism and the
	compatibility of left cup product and coboundary on the level of cocycles, but also
	with $\tilde{\chi}_c$ by associativity of the cup product on the level of complexes.
	
	Finally, assume that the class of $\chi$ is locally trivial, 
	and let $f$ be a cocycle in $C^i_{}(G_{F,S},C(1))$.  We lift it to $(s \circ f, 0)$ in
	$$
		C^i_{f}(G_{F,S},B(1)) \cong C^i_{}(G_{F,S},B(1)) \oplus 
		C^{i-1}_{l}(G_{F_v},A(1)).
	$$
	Using the first case, its coboundary is then 
	$$
		(\chi \cup f, -t_S(s \circ f)) = (\chi \cup f, -\varphi_S \circ \ell_S(f))
		= (\chi \cup f, -\varphi_S \cup \ell_S(f)).
	$$ 
\end{proof}

\subsection{Duality} \label{duality}

Suppose now that our exact sequence \eqref{extension} is in $\C_{\La,G_{F,S}}$.
We will assume that $A$ and $C$ are endowed with the $\mc{I}$-adic
topology and that  $\pi$ has a (continuous) splitting $s \colon C \to B$ of $\La$-modules.
The map of Proposition \ref{dualhom} provides a cocycle
$$
	\chi^* \colon G_{F,S} \to \Hom_{\Lo}(A^{\vee},C^{\vee})
$$
attached to $\chi$.
The cocycle $-\chi^*$ defines $B^{\vee}$ as an extension of $C^{\vee}$ by $A^{\vee}$.   
Let 
$$
	\nu_B^i \colon H^{i+1}(G_{F,S},A^{\vee})^{\vee} \to H^i(G_{F,S},C^{\vee})^{\vee}
$$
denote the Pontryagin dual map of $\kappa_{B(1)^{\vee}}^i$, and define 
$\nu_{c,B}^i = (\kappa_{c,B(1)^{\vee}}^i)^{\vee}$ and $\nu_{f,B}^i = (\kappa_{f,B(1)^{\vee}}^i)^{\vee}$ similarly.
By Poitou-Tate duality and a careful check of signs, we have the following rather standard result.

\begin{lemma} \label{psixidual}
	The following diagram commutes:
	$$
		 \xymatrix@C=60pt{ 
		H^i(G_{F,S},C(1)) \ar[r]^{\kappa_{B}^i} \ar[d]^{\beta_C^i} & H^{i+1}(G_{F,S},A(1)) 
		\ar[d]^{\beta_A^{i+1}}\\
		H^{3-i}_c(G_{F,S},C^{\vee})^{\vee} \ar[r]^{(-1)^{i+1}\nu^{2-i}_{c,B}} 
		& H^{2-i}_c(G_{F,S},A^{\vee})^{\vee},
		}
	$$
	Similarly, we have
	$$
		 \beta_{c,A}^{i+1} \circ \kappa_{c,B}^i = (-1)^{i+1}\nu_B^{2-i}
		 \circ \beta_{c,C}^i
	$$
\end{lemma}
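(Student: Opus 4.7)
The plan is to use Proposition \ref{equalcup} to rewrite both horizontal maps of the stated diagram as triple cup products, then invoke associativity and graded commutativity together with the identification $\Hom_{\La,\cts}(C,A) \cong \Hom_{\Lo}(A^\vee, C^\vee)$ of Proposition \ref{dualhom} (which matches $\chi$ with $\chi^*$) to reduce the claim to a sign computation.

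For the first identity, fix $\alpha \in H^i(G_{F,S}, C(1))$ and $\gamma \in H^{2-i}_c(G_{F,S}, A^\vee)$. By Proposition \ref{equalcup}, $\kappa^i_B(\alpha) = \chi \cup \alpha$, so $\beta^{i+1}_A(\kappa^i_B(\alpha))(\gamma)$ is the image of the triple product $(\chi \cup \alpha) \cup_c \gamma$ in $H^3_c(G_{F,S}, \qp/\zp(1)) \cong \qp/\zp$. Since the Pontryagin dual extension has cocycle $-\chi^*$, a second application of Proposition \ref{equalcup} gives $\kappa^{2-i}_{c, B(1)^\vee}(\gamma) = -\chi^* \cup_c \gamma$, and hence $\nu^{2-i}_{c, B}(\beta^i_C(\alpha))(\gamma) = \alpha \cup_c (-\chi^* \cup_c \gamma)$. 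The required identity thus reduces to
$$
(\chi \cup \alpha) \cup_c \gamma = (-1)^{i+1}\, \alpha \cup_c (-\chi^* \cup_c \gamma)
$$
in $H^3_c(G_{F,S}, \qp/\zp(1))$.

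To establish this, I first use associativity of the cup products $\cup$ and $\cup_c$ together with graded commutativity to move the degree-one class $\chi$ past $\alpha$, picking up the sign $(-1)^i$; the left-hand side then equals $(-1)^i\, \alpha \cup_c (\chi \cup_c \gamma)$. The cup product $\chi \cup_c \gamma$ is formed via the evaluation pairing $\Hom_{\La,\cts}(C, A) \times A^\vee \to C^\vee$, $(f, \phi) \mapsto \phi \circ f$, which is precisely the pairing that intertwines $\chi \cup_c \gamma$ with $\chi^* \cup_c \gamma$ under the canonical isomorphism of Proposition \ref{dualhom}. Combining, the left-hand side equals $(-1)^i\, \alpha \cup_c (\chi^* \cup_c \gamma) = (-1)^{i+1}\, \alpha \cup_c (-\chi^* \cup_c \gamma)$, as required. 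The second identity follows by the symmetric argument with $\cup$ and $\cup_c$ swapped (using ${}_c\!\cup$ in the appropriate position), replacing the role of $\beta^i_C$ with $\beta^i_{c,C}$.

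The main technical obstacle is careful sign bookkeeping. The cup products $\cup$, $\cup_c$, and ${}_c\!\cup$ commute with differentials only up to degree-dependent signs, and further signs arise from the complex shifts $[1]$ implicit in $\tilde{\chi}$, $\tilde{\chi}_c$, and from the sign conventions underlying Poitou-Tate duality. It is safest to perform the computation at the level of cocycle representatives, so that the only surviving sign in the final comparison is the claimed $(-1)^{i+1}$.
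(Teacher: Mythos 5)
Your proof follows essentially the same route as the paper: both rewrite $\kappa_B^i$ and $\kappa_{c,B(1)^\vee}^{2-i}$ as left cup products with $\chi$ and $-\chi^*$ via Proposition \ref{equalcup}, identify $\chi^*$ with $\chi$ through Proposition \ref{dualhom}, and then reduce the diagram's commutativity to a sign identity among triple cup products, which indeed evaluates to the stated $(-1)^{i+1}$. The only difference is that the paper cites \cite[5.3.3.2-4]{nekovar} directly for the triple-product identity $g\,{}_c\!\cup(\chi\cup f)=(-1)^i(\chi^*\cup_c g)\,{}_c\!\cup f$, whereas you sketch it informally by associativity and graded commutativity; your sign bookkeeping is consistent with Nekov\'a\v r's.
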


\begin{proof}
	For $f \in H^i(G_{F,S},C(1))$ and $g \in H_c^{2-i}(G_{F,S},A^{\vee})$, we have
	$$
		((\beta_A^{i+1} \circ \kappa_{B}^i)(f))(g) = (\beta_A^{i+1}(\chi \cup f))(g)
		= g \,{}_c\!\cup (\chi \cup f) = (-1)^i (\chi^* \cup_c  g)\, {}_c\!\cup f,
	$$
	the latter equality by \cite[5.3.3.2-4]{nekovar}, while
	$$
		((\nu_{c,B}^{2-i} \circ \beta_C^i)(f))(g) 
		= \beta_C^i(f)(\kappa_{c,B(1)^{\vee}}^{2-i}(g))
		=  -(\chi^* \cup_c g)\, {}_c \!\cup f.
	$$
\end{proof}

We also require the following general lemma, the proof of which is an simple exercise.

\begin{lemma} \label{quasiiso}
	Suppose that we have an exact triangle of cochain complexes of $\La$-modules
	$$
		M \xrightarrow{(\iota,\iota')} N \oplus N' \xrightarrow{\pi+\pi'} O \to M[1] .
	$$
	Then the map of complexes
	$$
		\Cone{(M \xrightarrow{\iota} N)} \xrightarrow{(-\pi,\iota')} \Cone{(N' \xrightarrow{\pi'} O)}
	$$
	is a quasi-isomorphism.
\end{lemma}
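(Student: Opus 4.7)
The plan is to show that $(-\pi,\iota')$ is a quasi-isomorphism by identifying its mapping cone, up to a diagonal sign change, with the mapping cone of the canonical map $f\colon \Cone\bigl((\iota,\iota')\bigr) \to O$ given by $f(m,n,n') = \pi(n) + \pi'(n')$.  Since the given triangle is exact, this $f$ is a quasi-isomorphism and hence its cone is acyclic; transported through the sign change, this will force $\Cone\bigl((-\pi,\iota')\bigr)$ to be acyclic as well.

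Unwinding the cone-differential convention fixed in the preceding remark, both $\Cone\bigl((-\pi,\iota')\bigr)$ and $\Cone(f)$ have the same underlying graded module in each degree $i$, namely $M^{i+2} \oplus N^{i+1} \oplus N'^{i+1} \oplus O^i$.  A direct computation of $d_{\Cone((-\pi,\iota'))}$ from $d_{\Cone(\iota)}$, $d_{\Cone(\pi')}$, and the map $(-\pi,\iota')$, compared with $d_{\Cone(f)}$ computed from $d_{\Cone((\iota,\iota'))}$ and $f$, shows that the two differentials differ only by signs on the $N'$- and $O$-coordinates, and that the diagonal map
\[
    \psi(m,n,n',o) = (m,n,-n',-o)
\]
defines a chain isomorphism $\Cone\bigl((-\pi,\iota')\bigr) \xrightarrow{\sim} \Cone(f)$.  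The computation uses the relation $\pi\iota + \pi'\iota' = 0$, which holds because the composite of the first two maps of the exact triangle vanishes.

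Finally, exactness of the given triangle says precisely that $f$ is a quasi-isomorphism, so $\Cone(f)$ is acyclic; transporting through $\psi$ yields the desired conclusion.  The only step that requires real care is tracking signs: with the cone convention $d(x,y) = (-dx, fx + dy)$, the signs appearing in $d_{\Cone((-\pi,\iota'))}$ and $d_{\Cone(f)}$ must be lined up precisely, and the diagonal sign change $\psi$ is the unique such reconciliation. Everything else in the argument is formal once this identification is in place.
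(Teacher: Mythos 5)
Your proof is correct, and it takes a genuinely different route from the paper's. The paper's argument is a diagram chase: it writes down the map between the two long exact cohomology sequences of $\Cone(\iota)$ and $\Cone(\pi')$, relates the degree-wise kernels and cokernels of $(\iota')^i$ and $\pi^i$ to the cohomology of the complexes $\ker$ and $\coker$, and uses exactness of the triangle to conclude that $(-\pi,\iota')^i$ is an isomorphism in each degree. You instead identify $\Cone\bigl((-\pi,\iota')\bigr)$ with $\Cone(f)$ for $f\colon \Cone((\iota,\iota')) \to O$ via the sign change $\psi(m,n,n',o) = (m,n,-n',-o)$, and then appeal to acyclicity of $\Cone(f)$. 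I checked the differentials: with the cone convention $d(x,y) = (-dx, fx + dy)$, one finds
\[
    d_{\Cone((-\pi,\iota'))}(m,n,n',o) = (dm,\,-\iota m - dn,\,\iota' m - dn',\,-\pi n + \pi' n' + do),
\]
\[
    d_{\Cone(f)}(m,n,n',o) = (dm,\,-\iota m - dn,\,-\iota' m - dn',\,\pi n + \pi' n' + do),
\]
and $\psi$ does intertwine them, using $\pi\iota + \pi'\iota' = 0$ exactly where you say. Your approach is cleaner and closer to the triangulated-category heuristic (it is essentially an instance of the octahedral axiom); the paper's is more elementary but requires more bookkeeping of the induced maps on cohomology. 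One point worth making explicit in your write-up: the step ``exactness of the triangle says precisely that $f$ is a quasi-isomorphism'' is immediate when the triangle is a literal cone triangle, which is the case in the paper's sole application (where $O = C_c(G_{F,S},B(1))$ is by definition a shifted cone on $(\iota,\iota')$ and $f$ is even an isomorphism of complexes); for a more general ``exact triangle'' one should pause to justify it, and your argument implicitly also uses that $\pi\iota + \pi'\iota' = 0$ holds on the nose rather than merely up to homotopy, which is needed for $(-\pi,\iota')$ to be a strict chain map in the first place.
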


Applying Lemma \ref{quasiiso} to the exact triangle
$$
	C(G_{F,S},B(1))[-1] \to C_l(G_{F,S},B(1))[-1] \to C_c(G_{F,S},B(1)) \to C(G_{F,S},B(1))
$$
(noting that $B = A \oplus C$ locally), we see that $C_{f}(G_{F,S},B(1))$ is quasi-isomorphic to
\begin{equation} \label{selmercone}
	\Cone(C_{l}(G_{F_v},C(1))[-1] \to C_{c}(G_{F,S},B(1))).
\end{equation}

Assuming that $\chi$ is locally trivial, we have that $\chi^*$ is locally trivial and
$$
	C_{f}(G_{F,S},B^{\vee}) = 
	\Cone(C_{}(G_{F,S},B^{\vee}) \xrightarrow{(s^{\vee})_S} 
	C_{l}(G_{F_v},C^{\vee}))[-1].
$$
The following is now a consequence of Tate and Poitou-Tate duality.

\begin{proposition} \label{thetadual}
	There are natural isomorphisms of topological $\La$-modules
	$$
		\beta_{f,B}^i \colon H^i_f(G_{F,S},B(1)) \to H^{3-i}_f(G_{F,S},B^{\vee})^{\vee}
	$$
	that fit into a commutative diagram
	$$
	\footnotesize
		 \xymatrix@C=7pt{
			\cdots \ar[r] &
			H^i_c(G_{F,S},A(1)) \ar[r] \ar[d]^{\beta_{c,A}^i} &
			H^i_f(G_{F,S},B(1)) \ar[r] \ar[d]^{\beta_{f,B}^i} &
			H^i(G_{F,S},C(1)) \ar[r]^{\kappa_{f,B}^i} \ar[d]^{\beta_C^i} & 
			H^{i+1}_c(G_{F,S},A) \ar[r] \ar[d]^{\beta_{c,A}^{i+1}} & \cdots\\
			\cdots \ar[r] &			
			H^{3-i}(G_{F,S},A^{\vee})^{\vee} \ar[r] &
			H^{3-i}_f(G_{F,S},B^{\vee})^{\vee} \ar[r] & 
			H^{3-i}_c(G_{F,S},C^{\vee})^{\vee} \ar[r] & 
			H^{2-i}(G_{F,S},A^{\vee})^{\vee} \ar[r] & \cdots,
		}
	$$
	where the lower connecting map is $(-1)^{i+1}\nu_{f,B}^{2-i}$.
\end{proposition}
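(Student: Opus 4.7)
The plan is to construct $\beta_{f,B}^i$ from a Poitou--Tate-type pairing of Selmer complexes, then verify commutativity of the diagram square by square, and finally conclude via the five-lemma that $\beta_{f,B}^i$ is an isomorphism. For the construction, I would combine the presentation \eqref{selmercone} of $C_f(G_{F,S},B(1))$ as $\Cone(C_l(G_{F_v},C(1))[-1] \to C_c(G_{F,S},B(1)))$ with the given description $C_f(G_{F,S},B^\vee) = \Cone(C(G_{F,S},B^\vee) \xrightarrow{(s^\vee)_S} C_l(G_{F_v},C^\vee))[-1]$. Since the map $s_v = s - \iota \circ \varphi_v$ is $G_{F_v}$-equivariant (Lemma \ref{gesplit}), the pairing $B(1) \otimes B^\vee \to \qp/\zp(1)$ decomposes locally as the direct sum of the pairings $A(1) \otimes A^\vee \to \qp/\zp(1)$ and $C(1) \otimes C^\vee \to \qp/\zp(1)$. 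Combining Tate duality for the local terms $C_l(G_{F_v},C(1))$ and Poitou--Tate duality for the pair $(C_c(G_{F,S},B(1)), C(G_{F,S},B^\vee))$, these pairings assemble into a derived pairing
$$
C_f(G_{F,S},B(1)) \otimes C_f(G_{F,S},B^\vee) \to \qp/\zp[-3],
$$
and $\beta_{f,B}^i$ is the induced map on cohomology.

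For the commutativity, the top long exact sequence is that of \eqref{Selmer_seq}, while the bottom is the Pontryagin dual of the long exact sequence of the analogous short exact sequence
$$
0 \to C_c(G_{F,S},C^\vee) \to C_f(G_{F,S},B^\vee) \to C(G_{F,S},A^\vee) \to 0
$$
for $B^\vee$. The two squares not involving the connecting homomorphisms commute by naturality of Tate and Poitou--Tate duality with respect to the canonical inclusion $C_c(G_{F,S},A(1)) \hookrightarrow C_f(G_{F,S},B(1))$ and projection $C_f(G_{F,S},B(1)) \to C(G_{F,S},C(1))$, together with their counterparts on the $B^\vee$ side. The connecting-map square with sign $(-1)^{i+1}\nu_{f,B}^{2-i}$ follows from Lemma \ref{psixidual} combined with Proposition \ref{equalcup}, which identifies $\kappa_{f,B}^i$ with the cup product $\tilde{\chi}_f^i$ via the explicit formula \eqref{chif}. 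Since $\beta_{c,A}^i$ and $\beta_C^i$, as well as their counterparts at shift $+1$, are isomorphisms by Tate and Poitou--Tate duality, the five-lemma then yields that $\beta_{f,B}^i$ is an isomorphism of topological $\La$-modules.

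The principal obstacle will be the careful sign bookkeeping. The cone-shift conventions, the differential on a cone as recorded in the remark following \eqref{c-supp_coch}, the minus sign in the cocycle $-\chi^*$ defining the extension class of $B^\vee$, and the Koszul-style signs appearing in cup products and Pontryagin duality for cones must all be tracked simultaneously. In particular, verifying that the assembled pairing on Selmer complexes is well-defined requires showing that the local coboundary contributions paired between $C_l(G_{F_v},C(1))[-1]$ and $C_l(G_{F_v},C^\vee)[-1]$ cancel against contributions from the compactly supported part, in the spirit of the Selmer-complex duality formalism of \cite[Section 5.7]{nekovar} and \cite[Section 4.2]{lim}. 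The sign $(-1)^{i+1}$ on the bottom connecting map emerges naturally once the cone-level pairing is unwound on cocycles and compared with the formula for $\tilde{\chi}_f$.
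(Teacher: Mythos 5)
Your proposal is correct and matches the paper's intended argument. The paper in fact gives no proof at all — Proposition \ref{thetadual} is introduced with "The following is now a consequence of Tate and Poitou-Tate duality," with the cone presentations \eqref{selmercone} and the displayed formula for $C_f(G_{F,S},B^\vee)$ serving as the implicit justification — and your expansion is exactly the natural unwinding: pair the local pieces $C_l(G_{F_v},C(1))$ and $C_l(G_{F_v},C^\vee)$ by local Tate duality, pair $C_c(G_{F,S},B(1))$ with $C(G_{F,S},B^\vee)$ by Poitou--Tate, assemble a cone-level pairing to $\qp/\zp[-3]$ in the style of \cite[Section 5.7]{nekovar} and \cite[Section 4.2]{lim}, verify the connecting-map square by the same cocycle-level computation as in Lemma \ref{psixidual} (adapted to $\kappa_{f,B}^i$ via Proposition \ref{equalcup} and \eqref{chif}), and finish with the five lemma. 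Your remarks on sign bookkeeping are well placed; a small point worth making explicit when you write it out is that Lemma \ref{psixidual} itself covers only $\kappa_B^i$ and $\kappa_{c,B}^i$, so the connecting square genuinely requires rerunning that proof's cup-product identity $g\,{}_c\!\cup (\chi \cup f) = (-1)^i (\chi^*\cup_c g)\,{}_c\!\cup f$ with the lift $\chi' = (\chi,-\varphi_S)$ in place of $\chi$, which is where the $(-1)^{i+1}$ in front of $\nu_{f,B}^{2-i}$ comes from.
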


\subsection{Iwasawa cohomology} \label{iwacoh}

Let $K$ be a Galois extension of $F$ that is $S$-ramified over a finite extension of $F$ in $K$, and set $\Gamma = \Gal(K/F)$.  
Let $\Omega$ denote the maximal
$S$-ramified extension of $K$.
Let $R$ be a commutative pro-$p$ ring, and let $\mf{A}$ be a topological basis of open ideals of $R$.
We set $\La = R\ps{\Gamma}$, which is itself a pro-$p$ ring.
Typically, we will simply use $S$ to denote the set of primes $S_E$ in any extension $E$ of $F$ lying above those in $S$, and $G_{E,S}$ will denote the Galois group of the maximal $S$-ramified extension of $E$.  
If $E$ is contained in $K$, we set $\mc{G}_{E,S} = \Gal(\Omega/E)$.
We let $\La^{\iota}$ denote $\La$ viewed as a $\La[\mc{G}_{F,S}]$-module with $\La$ acting by left multiplication and $\sigma \in \mc{G}_{F,S}$ acting by right multiplication by the image of $\sigma^{-1}$ in $\Gamma$.

Let us review a few results that may be found in \cite{lim}, extending work of Nekovar \cite{nekovar}.
Suppose that $T \in \C_{\La,\mc{G}_{F,S}}$.  We define the Iwasawa cochain complex as
$$
	C_{*,S}(K,T) = C_*(\mc{G}_{F,S},\La^{\iota} \cotimes{R} T)
$$
where $*$ again denotes no symbol, $c$, or $l$, and where $\cotimes{R}$ denotes the completed tensor product.  
Via a version of Shapiro's lemma for cochains,
we have
$$
	C_{*,S}(K,T) \cong \invlim{E \subset K} C_*(\mc{G}_{E,S},T),
$$
with the inverse limit taken over number fields $E$ containing $F$ with respect to corestriction maps.  We may replace $\mc{G}_{E,S}$ with $G_{E,S}$ in the above and obtain a quasi-isomorphic complex, but the above isomorphism makes the $\La$-module
structure more transparent, since $T$ is only assumed to be a $G_{E,S}$-module for sufficiently large $E$.
We may view the Iwasawa cohomology group $H^i_{*,S}(K,T)$ as an object of
$\mc{C}_{\La}$ by endowing it with the initial topology.
Since $T$ is compact, we have a canonical isomorphism
$$
	H^i_{*,S}(K,T) \xrightarrow{\sim} \invlim{E \subset K} H^i_*(\mc{G}_{E,S},T)
$$
of $\La$-modules.

Similarly, for $A \in \mc{D}_{\La,G_{F,S}}$, we can and will make the identification
$$
	C_*(G_{K,S},A) \cong C_*(\mc{G}_{F,S},\Hom_{R,\cts}(\La,A)),
$$
where we view $\La$ here as a right $\La$-module via right multiplication and a $\mc{G}_{F,S}$-module via
left multiplication and put the discrete topology on the $\La[\mc{G}_{F,S}]$-module $\Hom_{R,\cts}(\La,A)$.  
Direct limits with respect to restriction maps provide
an isomorphism
$$
	\lim_{\substack{\rightarrow\\E \subset K}} C_{*}(\mc{G}_{E,S},A)
	\to C_{*}(G_{K,S},A)
$$
of complexes of discrete $\La$-modules.  Again, we may replace $\mc{G}_{E,S}$ with $G_{E,S}$ in this
isomorphism.  Direct limits being exact, we have resulting $\La$-module isomorphisms
$$
	 \lim_{\substack{\rightarrow\\E \subset K}} H^i_{*}(\mc{G}_{E,S},A) 
	 \xrightarrow{\sim} H^i_{*}(G_{K,S},A).
$$

Taking $T \in \mc{C}_{\Lambda,G_{F,S}}$, the pairing $T^{\vee} \times T(1) \to \mu_{p^{\infty}}$ induced
by composition gives rise to a continuous, perfect pairing
$$
	\Hom_{R,\cts}(\La,T^{\vee}) \times (\La^{\iota} \cotimes{R} T(1)) \to \mu_{p^{\infty}},
	\qquad (\phi,\lambda \otimes t) \mapsto \phi(\lambda)(t).
$$
Cup product then induces the following isomorphisms in $\C_{\La}$, much as before:
\begin{eqnarray*}
	& \beta_{l,T}^i \colon H^i_{l,S}(K,T(1)) \to H_l^{2-i}(G_{K,S},T^{\vee})^{\vee}\\
	& \beta_{T}^i \colon H^i_S(K,T(1)) \to H^{3-i}_{c}(G_{K,S},T^{\vee})^{\vee}\\
	& \beta_{c,T}^i \colon H^i_{c,S}(K,T(1)) \to  H^{3-i}(G_{K,S},T^{\vee})^{\vee}
\end{eqnarray*}
(see \cite[Theorem 4.2.6]{lim}).
These agree with the inverse limits of the duality maps over number fields in $K$.

For any complex $T$ in $\C_{R,G_{F,S}}$ 
on which the $G_{F,S}$-actions factor through $\Gamma$, let $T^{\dagger}$ be the complex 
in $\C_{\La,G_{F,S}}$ that is $T$ as a complex of topological $R$-modules, has $\Gamma$-actions 
those induced by the $G_{F,S}$-action on $T$, and has trivial $G_{F,S}$-actions.  

\begin{lemma} \label{twistcoh}
	Let $T \in \C_{R,G_{F,S}}$ be such that the $G_{F,S}$-action factors
	through $\Gamma$.
	We have the following natural isomorphisms in $\mc{C}_{\La}$:
	\begin{enumerate}
		\item[a.] $H^3_{c,S}(K,T(1)) \cong T^{\dagger}$,
		\item[b.] $H^2_{c,S}(K,T(1)) \cong H^2_{c,S}(K,R(1)) \cotimes{R} T^{\dagger}$,
		\item[c.] $H^2_S(K,T(1)) \cong H^2_S(K,R(1)) \cotimes{R} T^{\dagger}$ 
		if $p$ is odd or $K$ has no real places.
	\end{enumerate}
\end{lemma}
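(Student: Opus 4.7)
The strategy is to reduce all three statements to the case $T=R$ by absorbing the $\Gamma$-action on $T$ into the coefficient side. First, I would construct a natural ``shear'' isomorphism of topological $R[\mc{G}_{F,S}]$-modules
$$
	\La^{\iota}\cotimes{R} T \;\xrightarrow{\sim}\; \La^{\iota}\cotimes{R} T^{\dagger},
$$
extending $\gamma\otimes t\mapsto \gamma\otimes \gamma t$ from the group-ring level by continuity. One verifies directly that this map is $\mc{G}_{F,S}$-equivariant (the $\Gamma$-action on $T$ on the source is transported into the right multiplication already present on $\La^{\iota}$) and that, via the shear, the natural $\La$-action on the source (by left multiplication on $\La^{\iota}$) corresponds to the \emph{diagonal} $\La$-action on the target. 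Twisting by $(1)$ and applying the Shapiro-type identification of Section~\ref{iwacoh} then yields a quasi-isomorphism
$$
	C_{*,S}(K,T(1)) \;\simeq\; C_{*,S}(K,R(1)) \hat{\otimes}^{\mbf{L}}_R T^{\dagger}
$$
for $*\in\{\emptyset,c\}$, and hence a hyper-Tor spectral sequence of $\La$-modules with $E_2$-terms $\mr{Tor}_q^R(H^p_{*,S}(K,R(1)),\, T^{\dagger})$ abutting to $H^{p-q}_{*,S}(K,T(1))$.

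For part (a), Poitou--Tate duality $\beta_{c,T}^3\colon H^3_{c,S}(K,T(1)) \xrightarrow{\sim} H^0(G_{K,S}, T^{\vee})^{\vee}$ combined with the triviality of the $G_{K,S}$-action on $T^{\vee}$ (since the $\mc{G}_{F,S}$-action on $T$ factors through $\Gamma$) gives $H^0(G_{K,S}, T^{\vee}) = T^{\vee}$, and hence $H^3_{c,S}(K,T(1)) \cong T^{\dagger}$. The induced $\La$-structure is shown to match that of $T^{\dagger}$ by tracing the Pontryagin pairing $T^{\vee} \times T \to \qp/\zp$ through the $\La^{\iota}$-twist appearing in the duality isomorphism.

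For parts (b) and (c), the only $E_2$-terms of the spectral sequence above that can contribute in total degree~$2$ are $H^2_{*,S}(K,R(1))\cotimes{R}T^{\dagger}$ and $\mr{Tor}_1^R(H^3_{*,S}(K,R(1)), T^{\dagger})$. For (b), part (a) shows $H^3_{c,S}(K,R(1)) \cong R$, which is $R$-flat, so the $\mr{Tor}_1$ vanishes, and in $\C_{\La}$ we obtain $H^2_{c,S}(K,T(1)) \cong H^2_{c,S}(K,R(1)) \cotimes{R} T^{\dagger}$. For (c), the hypothesis that $p$ is odd or $K$ has no real places forces $\mc{G}_{F,S}$ to have $p$-cohomological dimension $2$ on pro-$p$ coefficients, whence $H^3_S(K,R(1)) = 0$ and the spectral sequence collapses to $H^2_S(K,R(1)) \cotimes{R} T^{\dagger}$ in degree~$2$. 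The principal technical hurdle is the rigorous construction of the shear and the derived-tensor spectral sequence in the category $\C_{\La}$ of compact topological $\La$-modules, which must appeal to the continuity and completed-tensor-product formalism of the appendix and of Lim~\cite{lim}.
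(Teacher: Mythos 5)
Your overall strategy, of reducing to trivial $G_{F,S}$-action via a shear and then extracting coefficients, is sound in spirit, and your duality argument for part~(a) is a clean alternative to the paper's. However, the central step of your argument for parts~(b) and~(c) -- the claimed quasi-isomorphism $C_{*,S}(K,T(1)) \simeq C_{*,S}(K,R(1)) \hat{\otimes}^{\mbf{L}}_R T^{\dagger}$ and the resulting hyper-Tor spectral sequence over $R$ -- is asserted rather than justified, and this is where the real work lies. In the category of compact $R$-modules, one cannot simply invoke a bounded flat resolution of $T^{\dagger}$ over an arbitrary commutative pro-$p$ ring $R$, nor is it automatic that continuous cochains commute with completed tensor product (i.e., that $\Maps(\mc{G}^i, \La^{\iota}(1) \cotimes{R} T^{\dagger})$ is computed by $\Maps(\mc{G}^i, \La^{\iota}(1)) \hat{\otimes}_R T^{\dagger}$ even degreewise). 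Without this, you have no spectral sequence, and the rest of your collapse argument has nothing to collapse. Moreover, even granting the spectral sequence, your claim that only $q=0,1$ contribute implicitly requires $T^{\dagger}$ to have $R$-Tor-dimension at most one, which is not a hypothesis.

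The paper avoids these issues entirely by working over $\zp$, where every finite module has a length-one free resolution: it writes $T$ as an inverse limit of finite quotients $T_{\alpha}$, chooses $0 \to \zp^r \to \zp^s \to T_{\alpha} \to 0$, and uses the long exact sequence in cohomology together with $H^3_{c,S}(K,\zp(1)) \cong \zp$ and $H^4_{c,S}(K,\zp(1)) = 0$ to produce the isomorphism at each finite level, then passes to inverse limits. The $\cotimes{R}$ form of parts~(b) and~(c) is obtained by running the same argument with $T = R$ and composing, rather than by constructing an $R$-linear spectral sequence directly. If you want to salvage your approach, you should either descend to the $\zp$-module level in the same way, or restrict to a setting (e.g., $R$ regular, $T$ finitely presented over $R$) in which the derived tensor product machinery is available and verify explicitly that the Iwasawa cochain complex commutes with it.
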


\begin{proof}
	By the above remark, we may replace $T$ by $T^{\dagger}$ in the cohomology groups
	in question, thereby assuming that $T \in \C_{\La,G_{F,S}}$ has a trivial $G_{F,S}$-action.  
	Write $T = \invlim{} T_{\alpha}$,
	where the finite 
	$T_{\alpha}$ run over the $\La$-quotients of $T$ by open submodules.  We then have
	$$
		H^i_{c,S}(K,T(1)) \cong \invlim{\alpha} H^i_{c,S}(K,T_{\alpha}(1)).
	$$
	Note that there is an exact sequence
	$$
		0 \to \zp^r \xrightarrow{\theta} \zp^s \to T_{\alpha} \to 0
	$$
	of compact $\zp$-modules with $r, s \ge 0$.  Since
	$H^3_{c,S}(K,\zp(1)) \cong \zp$ and $H^4_{c,S}(K,\zp(1))$ is trivial, the long exact sequence 
	in cohomology yields that the composite map
	$$
		T_{\alpha} \xrightarrow{\sim} H^3_{c,S}(K,\zp(1)) \otimes_{\zp} T_{\alpha} 
		\to H^3_{c,S}(K,T_{\alpha}(1))
	$$	
	of $\La$-modules is an isomorphism.
	Part a then follows by taking inverse 
	limits of these compatible isomorphisms.
	
	Next, since the map $H^3_{c,S}(K,\zp^r(1)) \xrightarrow{\theta} H^3_{c,S}(K,\zp^s(1))$
	is injective, $H^2_{c,S}(K,T_{\alpha}(1))$ is isomorphic
	to the cokernel of the map
	$$
		H^2_{c,S}(K,\zp(1)) \otimes_{\zp} \zp^r \xrightarrow{\mr{id} \otimes \theta} 
		H^2_{c,S}(K,\zp(1)) \otimes_{\zp} \zp^s.
	$$
	Thus, the canonical map
	$$
		H^2_{c,S}(K,\zp(1)) \otimes_{\zp} T_{\alpha} \to H^2_{c,S}(K,T_{\alpha}(1))
	$$
	is an isomorphism, and therefore so is
	$$
		H^2_{c,S}(K,\zp(1)) \cotimes{\zp} T \to H^2_{c,S}(K,T(1)).
	$$
	This also holds for $T = R$, so the composite map
	$$
		H^2_{c,S}(K,\zp(1)) \cotimes{\zp} T \xrightarrow{\sim} 
		H^2_{c,S}(K,\zp(1)) \cotimes{\zp} R \cotimes{R} T \to H^2_{c,S}(K,R(1)) \cotimes{R} T
	$$
	is an isomorphism as well, from which part b follows.
	
	Finally, if $p$ is odd or $K$ has no real places, then $H^3_S(K,\zp(1)) = 0$, and part c follows by the 
	analogous argument.
\end{proof}

Suppose now that we have an exact sequence
$$
	0 \to A \xrightarrow{\iota} B \xrightarrow{\pi} C \to 0
$$
in $\C_{R,\mc{G}_{F,S}}$ and that 
$\pi$ has a (continuous) splitting $s \colon C \to B$ of $R$-modules.
Again, we have a continuous $1$-cocycle 
$$
	\chi \colon \mc{G}_{F,S} \to \Hom_{R,\cts}(C,A).
$$
On cohomology, we again have connecting maps
\begin{eqnarray*}
	& \kappa_{B}^i \colon H^i_S(K,C(1)) \to H^{i+1}_S(K,A(1)) &\\
	& \kappa_{c,B}^i  \colon H^i_{c,S}(K,C(1)) \to H^{i+1}_{c,S}(K,A(1))  &
\end{eqnarray*}
of compact $\La$-modules.
If we require that each $\ell_v(\chi)$ be a local coboundary upon restriction to $G_{K,S}$, 
then we also have
$$
	\kappa_{f,B}^i \colon H^i_S(K,C(1)) \to H^{i+1}_{c,S}(K,A(1)), 
$$
but in general it is just a map of compact $R$-modules.
Again, these agree with left cup product by $\chi$, which is to say, the
inverse limits of the maps $\tilde{\chi}$,
$\tilde{\chi}_c$ , and $\tilde{\chi}_f$ at the finite level, respectively.

Consider the dual exact sequence and the resulting cocycle
$$
	\chi^* \colon \mc{G}_{F,S} \to \Hom_R(A^{\vee},C^{\vee}).
$$
We again denote the Pontryagin duals of the resulting $i$th connecting maps by 
$\nu_{B}^i$, $\nu_{c,B}^i$ and, if $\chi$
is locally cohomologically trivial, $\nu_{f,B}^i$ on $G_{K,S}$-cohomology.

Suppose that $A$ and $C$ are endowed with the $\mf{A}$-adic topology.  As in Lemma \ref{psixidual}, 
we have the following.

\begin{lemma} \label{psixidual2}
	The maps $\kappa_{B}^i$ and $\kappa_{c,B}^i$  
	coincide with the maps $(-1)^{i+1}\nu_{c,B}^{2-i}$ and $(-1)^{i+1}\nu_{B}^{2-i}$, respectively,
	under Poitou-Tate duality.
\end{lemma}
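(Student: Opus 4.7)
The plan is to deduce this statement from its finite-level counterpart, Lemma \ref{psixidual}, by passing to the inverse limit over the number fields $E$ containing $F$ in $K$. Once all the ingredients are identified as inverse limits of their finite-level analogs, the conclusion will follow termwise.

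First, I would unpack the objects in play. By the Shapiro-type isomorphisms reviewed at the start of Section \ref{iwacoh}, the compact Iwasawa cohomology groups $H^i_{*,S}(K,T(1))$ are canonically $\invlim{E} H^i_*(\mc{G}_{E,S},T(1))$ with respect to corestriction, while the discrete cohomology groups $H^i_*(G_{K,S},A^{\vee})$ are canonically $\dirlim{E} H^i_*(\mc{G}_{E,S},A^{\vee})$ with respect to restriction. Since the cocycle $\chi$ is already defined on $\mc{G}_{F,S}$ and pulls back to each $\mc{G}_{E,S}$, the cup product description of Proposition \ref{equalcup} identifies $\kappa_B^i$ and $\kappa_{c,B}^i$ at the Iwasawa level as inverse limits of their finite-level counterparts, and identifies the connecting maps on $G_{K,S}$-cohomology of the dual sequence as direct limits of the corresponding finite-level maps. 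Since Pontryagin duality interchanges direct and inverse limits of locally compact modules, the maps $\nu_{c,B}^{2-i}$ and $\nu_B^{2-i}$ at the Iwasawa level are the inverse limits over $E$ of their finite-level analogs.

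Next, I would invoke the statement made just after the definition of $\beta_T^i$ and $\beta_{c,T}^i$ at the Iwasawa level, namely that these duality isomorphisms agree with the inverse limits over $E$ of the corresponding finite-level Tate and Poitou-Tate isomorphisms. Signs are consistent because all the cup products used in their construction arise from the single pairing $T^{\vee} \times T(1) \to \mu_{p^{\infty}}$.

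The proof then runs as follows: Lemma \ref{psixidual} at each finite level $E \subset K$ provides commutative squares identifying $\kappa_B^i$ with $(-1)^{i+1}\nu_{c,B}^{2-i}$ and $\kappa_{c,B}^i$ with $(-1)^{i+1}\nu_B^{2-i}$ under $\beta$. These squares are functorial in $E$: corestriction on the compact side and restriction on the discrete side make them into a compatible inverse system, and taking the inverse limit delivers the Iwasawa-level identity. The step most prone to subtlety is the verification that the finite-level squares are compatible under the transition maps, which amounts to the naturality of cup product and of Tate/Poitou-Tate duality under corestriction and restriction. This naturality is standard—it in fact underlies the very identification of the Iwasawa duality maps as inverse limits cited from \cite[Theorem 4.2.6]{lim}—so I expect no genuine obstacle beyond careful sign-tracking.
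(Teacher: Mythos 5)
Your proof is correct and is essentially the same as what the paper intends. The paper gives no explicit argument for Lemma~\ref{psixidual2} beyond the remark ``As in Lemma~\ref{psixidual},'' having already observed just before the statement that the Iwasawa-level connecting maps $\kappa^i_B$, $\kappa^i_{c,B}$ are cup products with $\chi$ and equal the inverse limits of the finite-level maps, and that the duality isomorphisms $\beta$ agree with the inverse limits of the finite-level Tate/Poitou--Tate isomorphisms. Whether one reads ``as in'' as re-running the two-line cup-product calculation of Lemma~\ref{psixidual} directly at the Iwasawa level, or, as you do, as invoking the finite-level squares termwise and passing to the inverse limit over $E \subset K$, the content is the same: the formal sign identity from \cite[5.3.3.2--4]{nekovar} underlies both, and your explicit compatibility checks under corestriction/restriction are exactly the naturality statements the paper has already recorded from \cite[Theorem 4.2.6]{lim}. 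No gap.
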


We define
$$
	C_{f,S}(K,B) = C_f(\mc{G}_{F,S},\La^{\iota} \cotimes{R} B).
$$
Similarly, define
$$
	C_f(G_{K,S},B^{\vee}) = \dirlim{E \subset K} C_f(\mc{G}_{E,S},B^{\vee}).
$$
We have the following analogue of Proposition \ref{thetadual}.

\begin{proposition} \label{Selmer_duality}
	Suppose that $\chi$ is locally cohomologically trivial upon restriction to $G_{K,S}$.
	Then there are natural isomorphisms
	$$
		\beta_{f,B}^i \colon H^i_{f,S}(K,B(1)) \to 
		H^{3-i}_f(G_{K,S},B^{\vee})^{\vee}
	$$
	of topological $R$-modules that are compatible with $\beta_{c,A}^i$ and $\beta_C^i$ 
	in the sense of Proposition \ref{thetadual}, and we have
	$\beta_{c,A}^{i+1} \circ \kappa_{f,B}^{i} = (-1)^{i+1}\nu_{f,B}^{2-i} \circ \beta_C^i$.
\end{proposition}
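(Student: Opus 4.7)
The plan is to mimic the proof of Proposition \ref{thetadual} in the Iwasawa setting, applying the five lemma to a suitable commutative ladder built from long exact cohomology sequences and duality.

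First, I would extract the defining exact triangle for the Selmer complex. As in the sequence \eqref{Selmer_seq}, the complex $C_{f,S}(K,B(1)) = C_f(\mc{G}_{F,S},\La^{\iota} \cotimes{R} B(1))$ sits in a short exact sequence
$$
0 \to C_{c,S}(K,A(1)) \to C_{f,S}(K,B(1)) \to C_S(K,C(1)) \to 0,
$$
obtained via Lemma \ref{quasiiso} applied to the Iwasawa-theoretic analogue of the exact triangle above \eqref{selmercone}. Dually, by the hypothesis of local cohomological triviality of $\chi$ on restriction to $G_{K,S}$, the dual cocycle $\chi^*$ is also locally trivial, and the defining cone description of $C_f(G_{K,S},B^{\vee})$ similarly gives
$$
0 \to C_c(G_{K,S},C^{\vee}) \to C_f(G_{K,S},B^{\vee}) \to C(G_{K,S},A^{\vee}) \to 0
$$
after the same quasi-isomorphism trick. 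Pontryagin-dualizing the second sequence and laying its long exact sequence beneath the first gives the top and bottom rows of the desired ladder, with connecting maps $\kappa_{f,B}^i$ on top and $\nu_{f,B}^i$ on the bottom.

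Next, I would populate the vertical maps. The isomorphisms $\beta_{c,A}^i$ and $\beta_C^i$ are provided in Iwasawa-theoretic form by \cite[Theorem 4.2.6]{lim}, reviewed in Section \ref{iwacoh}. The squares that do not involve the new map $\beta_{f,B}^i$ split into two types: those in which both horizontal maps come directly from the inclusions and projections of the cone construction — these commute on the nose by the naturality of cup product duality — and the two squares adjacent to the connecting map, which commute up to the sign $(-1)^{i+1}$ by Lemma \ref{psixidual2} applied to $\kappa_{f,B}^i$ and to its dual $\nu_{f,B}^{2-i}$. At this point the five lemma yields canonical isomorphisms $\beta_{f,B}^i$ making the entire ladder commute up to the prescribed signs; in particular the desired identity $\beta_{c,A}^{i+1} \circ \kappa_{f,B}^i = (-1)^{i+1}\nu_{f,B}^{2-i} \circ \beta_C^i$ is just the commutativity of one of those squares.

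The main obstacle I anticipate is the sign verification in the square joining $\kappa_{f,B}^i$ to $\nu_{f,B}^{2-i}$, since $\kappa_{f,B}^i$ was defined on the Selmer cone via the mixed cup product formula $\tilde{\chi}_f(f) = (\chi \cup f, -\varphi_S \cup \ell_S(f))$ of \eqref{chif}. Checking the sign requires evaluating both sides on pairs of cocycles, exchanging the order of cup products via the supercommutativity of compactly supported cup product \cite[5.3.3]{nekovar}, and matching with the analogous expression for $\nu_{f,B}^{2-i}$ obtained from the dual cocycle $-\chi^*$; the compensating signs from the extra local term and from \cite[5.3.3.2-4]{nekovar} combine exactly as in the proof of Lemma \ref{psixidual}. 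Once this is in hand, naturality of $\beta_{f,B}^i$ (in particular its independence of the various splittings and local coboundaries $\varphi_v$) is automatic from the five lemma, since the flanking duality maps are independent of these choices.
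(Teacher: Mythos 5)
Your skeleton — the two short exact sequences of cone complexes, their long exact cohomology sequences, a duality ladder, and a final sign check on the connecting square — is the right one, and the final paragraph correctly identifies the cochain-level computation via \eqref{chif} and \cite[5.3.3.2--4]{nekovar} as the substantive work. But the middle step has a real gap: the five lemma does not \emph{produce} the middle map $\beta_{f,B}^i$; it only certifies that an already-constructed map compatible with the ladder is an isomorphism, and it gives neither uniqueness nor naturality. You need to build $\beta_{f,B}^i$ before you can apply the five lemma. The paper's implicit mechanism is the one set up by the quasi-isomorphism \eqref{selmercone}: $C_{f,S}(K,B(1))$ is naturally a cone of $C_{l,S}(K,C(1))[-1] \to C_{c,S}(K,B(1))$, while $C_f(G_{K,S},B^{\vee})$ is by definition $\Cone(C(G_{K,S},B^{\vee}) \to C_l(G_{K,S},C^{\vee}))[-1]$. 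Tate local duality pairs the two local terms and Poitou--Tate pairs the two global terms, so Nekovar's construction of cup products on cones of morphisms of complexes (the material around \cite[Section 5.7]{nekovar} cited in the paper) produces the pairing, hence $\beta_{f,B}^i$, directly on complexes. It is this explicit construction — not the five lemma — that makes $\beta_{f,B}^i$ canonical and independent of the splittings and of the local coboundaries $\varphi_v$. The five lemma then enters only to show the map is an isomorphism.

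A second problem: the commutativity up to $(-1)^{i+1}$ of the connecting square, i.e.\ $\beta_{c,A}^{i+1} \circ \kappa_{f,B}^i = (-1)^{i+1}\nu_{f,B}^{2-i} \circ \beta_C^i$, is the \emph{second assertion} of the proposition, not a corollary of Lemma \ref{psixidual2}. That lemma treats only $\kappa_B$ and $\kappa_{c,B}$; the map $\kappa_{f,B}$ on the Selmer complex has an extra local term ($-\varphi_S \cup \ell_S(f)$) in its cochain formula \eqref{chif}, and the matching sign bookkeeping is new. You do seem to realize this in your last paragraph, so the issue is one of organization rather than a misconception — but as written, the middle paragraph uses as input something that is actually part of the conclusion. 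Once the pairing on cones is constructed explicitly, the sign computation you sketch is the right way to finish.
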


\section{$S$-Reciprocity maps} \label{recmap}

In this section, we prove our main theorem from the introduction: see Theorem \ref{comparemaps}.  In Theorem \ref{equivvals}, we describe its application to values of cup product pairings.

We continue to let $K$ denote a Galois extension of $F$ that is $S$-ramified over a finite extension of $F$.
From now on, we assume that either $p$ is odd or that $K$ has no real places.  We set $R = \zp$ and $\La = \zp\ps{\Gamma}$, with
$\Gamma = \Gal(K/F)$ as before.  

\subsection{The fundamental exact sequences} \label{fundex}

We briefly run through the Iwasawa modules (i.e., $\Lambda$-modules) of interest.  First,  we use $\mf{X}_K$ denote the Galois group of the maximal $S$-ramified abelian pro-$p$ extension of $K$.  Let $Y_K$ denote the Galois group of the maximal unramified abelian pro-$p$ extension of $K$ in which all primes above those in $S$ split completely.   Let $\mc{U}_K$ be the inverse limit under norm maps of the $p$-completions of the $S$-units in number fields in $K$.  

The map $\beta_{c,\zp}^2$ induced by cup product is a canonical isomorphism
$$
	H^2_{c,S}(K,\zp(1)) \xrightarrow{\sim} H^1(G_{K,S},\qp/\zp)^{\vee},
$$
allowing us to identify the group on the left with $\mf{X}_K$.
Moreover, the kernel of the natural map
$$
	H^2_{c,S}(K,\zp(1)) \to H^2_S(K,\zp(1))
$$
is exactly the kernel of the restriction map $\mf{X}_K \to Y_K$.  Therefore, we obtain a canonical injection
$$
	\iota_K \colon Y_K \to H^2_S(K,\zp(1)).
$$
Finally, Kummer theory allows us to canonically identify $\mc{U}_K$ with $H^1_S(K,\zp(1))$.

Let $\mc{X}$ denote the quotient of $\zp\ps{\mf{X}_K}$ by the square of its augmentation ideal $I_{\mf{X}_K}$.  With the standard isomorphism $\mf{X}_K \xrightarrow{\sim} I_{\mf{X}_K}/I_{\mf{X}_K}^2$ that takes an element $\sigma$ to $(\sigma-1) \bmod I_{\mf{X}_K}^2$, this gives rise to the exact sequence
\begin{equation} \label{Sramseq}
	0 \to \mf{X}_K \to \mc{X} \to \zp \to 0
\end{equation}
of compact $\zp\ps{G_{K,S}}$-modules, where $\mf{X}_K$ acts by left multiplication on $\mc{X}$ and trivially on
$\mf{X}_K$ and $\zp$.  The cocycle $\mu \colon G_{K,S} \to \mf{X}_K$ that this sequence defines, which is a homomorphism since $\mf{X}_K$ has trivial $G_{K,S}$-action, is just the restriction map on Galois groups. 

By part c of Lemma \ref{twistcoh},
our connecting map $\kappa_{\mc{X}}^1$ is a homomorphism
$$
	\Psi_K \colon \mc{U}_K \to H^2_S(K,\zp(1)) \cotimes{\zp} \mf{X}_K
$$
that we call the $S$-reciprocity map for $K$.  Since $\mu \in H^1(G_{K,S},\mf{X}_K)^{\mc{G}_{F,S}}$, the equality $\kappa_{\mc{X}}^1 = \tilde{\mu}^1$ and Galois equivariance of cup products imply that the $S$-reciprocity map $\Psi_K$ is a homomorphism of $\La$-modules.

\begin{remark}
    With an additional hypothesis, we may give $\mc{X}$ a continuous, $\zp$-linear $\mc{G}_{F,S}$-action that turns \eqref{Sramseq} into an exact sequence of $\Lambda$-modules.
    Let $M_K$ be the maximal $S$-ramified abelian pro-$p$ extension of $K$.
    Suppose that there exists a continuous homomorphism $\alpha \colon \Gamma \to \Gal(M_K/F)$ splitting the restriction map (e.g., that $\Gamma$ is free pro-$p$).  As topological spaces, one then has
    $$
    	\Gal(M_K/F) \cong \mf{X}_K \times \Gamma
    $$
    via the inverse of the map that takes $(x,\gamma)$ to $x\alpha(\gamma)$.
    Using this identification, we define a map $p \colon \Gal(M_K/F) \to \mf{X}_K$ by 
    $p(x,\gamma) = x$.  It is a cocycle as $\alpha$ is a homomorphism.
    The cocycle on $\mc{G}_{F,S}$ resulting from inflation gives rise to a continuous $\mc{G}_{F,S}$-action on 
    $\mc{X}$, extending the natural $G_{K,S}$-action (and the conjugation action of $\mc{G}_{F,S}$ on $\mf{X}_K$).    
    The class of $p$ is independent of the splitting, which
    means that the class of the extension \eqref{Sramseq} is as well.
\end{remark}

The following lemma is easily proven by pushing out the exact sequence \eqref{Sramseq}.

\begin{lemma} \label{psicomp}
	Let $M \in \C_{\zp,\mc{G}_{F,S}}$ with trivial $G_{K,S}$-action, and write 
	$M \cong \invlim{\alpha} M_{\alpha}$ with $M_{\alpha}$ a finite
	quotient of $M$.  For
	$$
		h = (h_{\alpha})_{\alpha} \in \invlim{\alpha} H^1_{}(G_{K,S},M_{\alpha}),
	$$
	let 
	$$
		\tilde{h} \colon \mc{U}_K \cong H^1_S(K,\zp(1)) \to H^2_S(K,M(1)) \cong 
		H^2_S(K,\zp(1)) \cotimes{\zp} M
	$$
	be the composite map induced by the inverse limit under corestriction of left cup products with
	cohomology classes that restrict to $h_{\alpha}$.
	Then 
	$$
		(\mr{id} \otimes h) \circ \Psi_K = \tilde{h}.
	$$
\end{lemma}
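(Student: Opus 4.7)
The plan is to push out the fundamental exact sequence \eqref{Sramseq} along the components $h_\alpha$ of $h$ and invoke functoriality of the connecting homomorphism.

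First, I recall from Proposition \ref{equalcup} that $\Psi_K = \kappa_{\mc{X}}^1$ is realized as left cup product (on the Iwasawa cochain complex, after applying Lemma \ref{twistcoh}(c) to the target) with the cocycle $\mu \colon G_{K,S} \to \mf{X}_K$ that is the restriction map and that classifies the extension \eqref{Sramseq}.  For each $\alpha$, I push out \eqref{Sramseq} along $h_\alpha \colon \mf{X}_K \to M_\alpha$ to obtain an exact sequence of $\zp\ps{G_{K,S}}$-modules
$$
    0 \to M_\alpha \to \mc{X}_\alpha \to \zp \to 0,
$$
whose extension class in $H^1(G_{K,S},M_\alpha) \cong \Hom_{\La,\cts}(\mf{X}_K,M_\alpha)$ is exactly $h_\alpha$, the corresponding cocycle being $h_\alpha \circ \mu$.

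Next, I apply Proposition \ref{equalcup} to this pushed-out sequence to identify the connecting map
$$
    \kappa_{\mc{X}_\alpha}^1 \colon H^1_S(K,\zp(1)) \to H^2_S(K,M_\alpha(1))
$$
with left cup product by $h_\alpha \circ \mu$, represented at each finite level $E \subset K$ by a class in $H^1(G_{E,S},M_\alpha)$ whose restriction to $G_{K,S}$ is $h_\alpha$.  After using Lemma \ref{twistcoh}(b) to identify the target with $H^2_S(K,\zp(1)) \cotimes{\zp} M_\alpha$, this is by construction the $\alpha$-component $\tilde{h}_\alpha$ of $\tilde{h}$.

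Finally, functoriality of the connecting map in the pushout (equivalently, functoriality of left cup product in the coefficient module) gives
$$
    \kappa_{\mc{X}_\alpha}^1 = (\mr{id} \otimes h_\alpha) \circ \kappa_{\mc{X}}^1 = (\mr{id} \otimes h_\alpha) \circ \Psi_K,
$$
and passing to the inverse limit over $\alpha$ produces the asserted identity $(\mr{id} \otimes h) \circ \Psi_K = \tilde{h}$.  No genuine obstacle is anticipated: the argument is a direct naturality check, with the only care required being to confirm that the coefficient-module identification of Lemma \ref{twistcoh} intertwines $\mr{id} \otimes h_\alpha$ on the $\mf{X}_K$-factor with the functorial effect on $H^2_S(K,-(1))$ of the pushout map $h_\alpha$, which is immediate from the cochain-level definitions and the compatibility of cup products with corestriction recorded in Section \ref{iwacoh}.
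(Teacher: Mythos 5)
Your proposal is correct and matches the paper's intended argument exactly: the paper states that the lemma "is easily proven by pushing out the exact sequence \eqref{Sramseq}," and that pushout--plus--naturality of the connecting map (recast via Proposition \ref{equalcup} and Lemma \ref{twistcoh}(c)) is precisely what you carry out. Two harmless slips: the identification of $H^1(G_{K,S},M_\alpha)$ is with $\Hom_{\cts}(\mf{X}_K,M_\alpha)$ (not $\La$-linear homomorphisms, since $h_\alpha$ need not be Galois-equivariant for $\Gamma$), and the coefficient identification you invoke for $H^2_S(K,M_\alpha(1))$ is Lemma \ref{twistcoh}(c), not (b).
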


As a quotient of $\mc{X}$, we obtain the extension
\begin{equation} \label{Yseq}
	0 \to Y_K \to \mc{Y} \to \zp \to 0.
\end{equation}
In this case, the cocycle $\chi \colon G_{K,S} \to Y_K$ defining the extension is unramified everywhere and $S$-split.
Applying parts a and b of Lemma \ref{twistcoh},
the first connecting homomorphism is then a homomorphism
$$
	\Theta_K = \kappa_{f,\mc{Y}}^1 \colon \mc{U}_K \to \mf{X}_K \cotimes{\zp} Y_K,
$$
and the second is
$$
	q_K = \kappa_{f,\mc{Y}}^2 \colon H^2_S(K,\zp(1)) \to Y_K.
$$

\begin{proposition} \label{theta2}
	The composition $q_K \circ \iota_K$ is the identity of $Y_K$. 
\end{proposition}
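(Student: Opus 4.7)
The plan is to compare the Selmer connecting map $q_K = \kappa_{f,\mc{Y}}^2$ with the compactly supported connecting map $\kappa_{c,\mc{Y}}^2$, and to identify the latter with the natural surjection $\mf{X}_K \to Y_K$.

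First, I would note that, since $Y_K$ is the Galois group of the maximal $S$-split unramified abelian pro-$p$ extension of $K$, the cocycle $\chi \colon G_{K,S} \to Y_K$ is locally trivial at each $v \in S$, so we may take $\varphi_v = 0$.  The explicit formulas of Proposition \ref{equalcup} then show that, for a cocycle $(f_1, f_2) \in C^2_{c,S}(K,\zp(1))$, both $\kappa_{c,\mc{Y}}^2(f_1,f_2)$ and $\kappa_{f,\mc{Y}}^2(f_1)$ are represented by the cocycle $(\chi \cup f_1, 0) \in C^3_{c,S}(K, Y_K(1))$.  Hence the diagram
$$
	\SelectTips{cm}{} \xymatrix@C=30pt{
		H^2_{c,S}(K,\zp(1)) \ar[r]^-{\kappa_{c,\mc{Y}}^2} \ar[d] & H^3_{c,S}(K,Y_K(1)) \ar@{=}[d] \\
		H^2_{S}(K,\zp(1)) \ar[r]^-{q_K} & H^3_{c,S}(K,Y_K(1))
	}
$$
commutes, where the left vertical is the natural map from compactly supported to ordinary cohomology.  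By the construction of $\iota_K$, this vertical map factors under $\beta_{c,\zp}^2 \colon H^2_{c,S}(K,\zp(1)) \xrightarrow{\sim} \mf{X}_K$ as the natural surjection $q \colon \mf{X}_K \to Y_K$ followed by $\iota_K$.

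The heart of the proof is then the identification of $\kappa_{c,\mc{Y}}^2$, under $\beta_{c,\zp}^2$ and the isomorphism $H^3_{c,S}(K, Y_K(1)) \cong Y_K$ from Lemma \ref{twistcoh}a, with $q$ itself.  Since $\kappa_{c,\mc{Y}}^2$ is left cup product with the class $[\chi]$, and $[\chi] \in H^1(G_{K,S},Y_K) \cong \Hom_{\cts}(\mf{X}_K, Y_K)$ is precisely $q$, I would verify the equality $[\chi] \cup \bar x = q(\bar x)$ for $\bar x \in \mf{X}_K$ by pairing with each $\psi \in Y_K^{\vee}$.  Functoriality of cup product and the characterization of $\beta_{c,\zp}^2$ as cup product followed by the trace isomorphism $H^3_{c,S}(K,\mu_{p^{\infty}}) \xrightarrow{\sim} \qp/\zp$ combine to give
$$
	\psi([\chi] \cup \bar x) = (\psi \circ [\chi]) \cup \bar x = \beta_{c,\zp}^2(\bar x)(\psi \circ q) = \psi(q(\bar x)),
$$
and $[\chi] \cup \bar x = q(\bar x)$ then follows from Pontryagin duality for $Y_K$.

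To conclude, given $y \in Y_K$, I would lift it to $\bar x \in \mf{X}_K$ via $q$; the corresponding class $x \in H^2_{c,S}(K,\zp(1))$ has image $\iota_K(y)$ in $H^2_S(K,\zp(1))$, and the commutative diagram together with the identification yield $q_K(\iota_K(y)) = \kappa_{c,\mc{Y}}^2(x) = q(\bar x) = y$.  The main obstacle is the cup product identification of $\kappa_{c,\mc{Y}}^2$ with $q$, which requires unwinding the definition of the Poitou-Tate duality map $\beta_{c,\zp}^2$ as cup product with the trace.
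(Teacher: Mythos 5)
Your proposal is correct and takes essentially the same two-step route as the paper's proof. Step one — showing that $q_K$ precomposed with $H^2_{c,S}(K,\zp(1)) \to H^2_{S}(K,\zp(1))$ equals $\kappa_{c,\mc{Y}}^2 \colon \mf{X}_K \to Y_K$ — is precisely what the paper does via Proposition \ref{equalcup}, and your observation that one may take $\varphi_v = 0$ (since $\ell_v(\chi)$ vanishes as a cocycle, not just as a class, by the $S$-split unramified condition on $Y_K$) is valid and makes the cochain-level comparison $(\chi\cup f_1,0)$ transparent. Step two — identifying $\kappa_{c,\mc{Y}}^2$, under $\beta^2_{c,\zp}$ and $\beta^3_{c,Y_K}$, with the quotient map $q\colon\mf{X}_K\to Y_K$ — is also what the paper does, though it invokes Lemma \ref{psixidual2} (whose sign $(-1)^{i+1}$ combines with the sign from $\widetilde{\chi^*} = -\kappa_{\mc{Y}(1)^{\vee}}$ to give the positive identity) and then reads off that the Pontryagin dual of $\widetilde{\chi^*}{}^0$ is precomposition with $q$. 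You instead run the same duality calculation directly, pairing $\kappa_{c,\mc{Y}}^2(x)$ against $\psi \in Y_K^{\vee}$ and unwinding $\beta^2_{c,\zp}$ as cup product with the trace isomorphism. This is the same content unpacked by hand rather than cited; the only thing to be careful about, which the paper's route handles for you, is that the two potential sign flips (from the cup-product adjunction and from the dual-extension cocycle being $-\chi^*$) cancel for $i=2$, so your unsigned computation does come out right.
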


\begin{proof}
	By Proposition \ref{equalcup}, the map $q_K$ agrees with $\tilde{\chi}_f^2$.   Precomposing it
	with the map 
	$$
		H^2_{c,S}(K,\zp(1)) \to H^2_S(K,\zp(1)),
	$$
	the resulting map
	$$
		H^2_{c,S}(K,\zp(1)) \to H^3_{c,S}(K,Y_K(1))
	$$ 
	is exactly $\tilde{\chi}_c^2 = \kappa_{c,\mc{Y}}^2$, again by Proposition \ref{equalcup}.  By 
	Lemma \ref{psixidual2}, 
	it is Pontryagin dual (under Poitou-Tate duality) to the map 
	$$
		H^0(G_{K,S},Y_K^{\vee}) \to H^1(G_{K,S},\qp/\zp)
	$$ 
	given by $-\kappa_{\mc{Y}(1)^{\vee}}^0 = \widetilde{\chi^*}{}^0$, or in other words, the map 
	$Y_K^{\vee} \to \mf{X}_K^{\vee}$ that takes an element to its precomposition with 
	the projection $\mf{X}_K \to Y_K$.  That being said, $q_K \circ \iota_K$ is as stated.
\end{proof}

Let 
$$
	\mr{sw} \colon Y_K \cotimes{\zp} \mf{X}_K  \xrightarrow{\sim} \mf{X}_K \cotimes{\zp} Y_K 
$$
denote the standard isomorphism giving commutativity of the tensor product, i.e., that
swaps the two coordinates of simple tensors.

\begin{theorem} \label{comparemaps}
	The diagram
	$$
		 \xymatrix{
			\mc{U}_K \ar[r]^-{\Psi_K} \ar[rd]_{-\Theta_K} & H^2_S(K,\zp(1)) \cotimes{\zp} \mf{X}_K 
			\ar[d]^{\mr{sw} \circ (q_K \otimes \mr{id})} \\
			& \mf{X}_K \cotimes{\zp} Y_K 
		}
	$$
	commutes.
\end{theorem}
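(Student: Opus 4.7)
My plan is an explicit cocycle-level computation in compactly supported cohomology, exploiting the local triviality of $\chi = \pi \circ \mu$. By Proposition \ref{equalcup}, the three maps $\Psi_K = \kappa^1_{\mc{X}}$, $\Theta_K = \kappa^1_{f,\mc{Y}}$, and $q_K = \kappa^2_{f,\mc{Y}}$ are given by left cup products with the tautological cocycles $\mu$ and $\chi$. Since $Y_K$ is the Galois group of the maximal unramified, $S$-split pro-$p$ abelian extension of $K$, the cocycle $\chi$ vanishes identically on each decomposition subgroup at a prime above $S$, so I may take the local trivializations $\varphi_v = 0$ for all $v$. Then $\Theta_K(a)$ is represented by the cocycle $(\chi \cup a, 0) \in C^2_{c,S}(K, Y_K(1))$.

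The first move is to recognize $q_K \otimes \mr{id}_{\mf{X}_K}$ as a Selmer connecting map in its own right. Consider the extension $0 \to Y_K \cotimes{\zp} \mf{X}_K \to \mc{Y} \cotimes{\zp} \mf{X}_K \to \mf{X}_K \to 0$ obtained by tensoring $\mc{Y}$ with the trivial $G_{K,S}$-module $\mf{X}_K$; its cocycle $\chi \otimes \mr{id}_{\mf{X}_K}$ is again locally zero. By naturality in coefficients together with Lemma \ref{twistcoh}, the Selmer connecting map $\kappa^2_{f, \mc{Y} \otimes \mf{X}_K} \colon H^2_S(K, \mf{X}_K(1)) \to H^3_{c,S}(K, Y_K \cotimes{\zp} \mf{X}_K(1))$ is identified with $q_K \otimes \mr{id}_{\mf{X}_K}$ under the canonical isomorphisms $H^2_S(K, \mf{X}_K(1)) \cong H^2_S(K, \zp(1)) \cotimes{\zp} \mf{X}_K$ and $H^3_{c,S}(K, Y_K \cotimes{\zp} \mf{X}_K(1)) \cong Y_K \cotimes{\zp} \mf{X}_K$. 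Applying Proposition \ref{equalcup} to the class $\Psi_K(a) = [\mu \cup a]$, one finds that $(q_K \otimes \mr{id}_{\mf{X}_K})(\Psi_K(a))$ is represented by the cocycle $(\chi \cup \mu \cup a, 0) \in C^3_{c,S}(K, Y_K \cotimes{\zp} \mf{X}_K(1))$.

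On the other side, left cup product with $\mu$ gives $\mu \cup_c \Theta_K(a) = [(\mu \cup \chi \cup a, 0)] \in H^3_{c,S}(K, \mf{X}_K \cotimes{\zp} Y_K(1))$. I claim that the map $\mu \cup_c \colon H^2_{c,S}(K, Y_K(1)) \to H^3_{c,S}(K, \mf{X}_K \cotimes{\zp} Y_K(1))$ becomes the identity of $\mf{X}_K \cotimes{\zp} Y_K$ under the isomorphisms of Lemma \ref{twistcoh}. Indeed, $\mu$ represents the tautological class $\mr{id}_{\mf{X}_K} \in \Hom_{\cts}(\mf{X}_K, \mf{X}_K) = H^1(G_{K,S}, \mf{X}_K)$, and the cup product pairing $H^1(G_{K,S}, \mf{X}_K) \otimes H^2_{c,S}(K, \zp(1)) \to H^3_{c,S}(K, \mf{X}_K(1))$ is identified under Poitou-Tate duality with the evaluation $\Hom(\mf{X}_K, \mf{X}_K) \otimes \mf{X}_K \to \mf{X}_K$. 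Consequently $[(\mu \cup \chi \cup a, 0)]$ corresponds to $\Theta_K(a) \in \mf{X}_K \cotimes{\zp} Y_K$.

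It remains to connect $[(\chi \cup \mu \cup a, 0)]$ and $[(\mu \cup \chi \cup a, 0)]$ via graded commutativity, with the key point being that a coboundary witnessing it can be chosen with zero local component. Define the $1$-cochain $\beta \in C^1(\mc{G}_{F,S}, \mf{X}_K \cotimes{\zp} Y_K)$ by $\beta(\sigma) = -\mu(\sigma) \otimes \chi(\sigma)$. A direct computation using the trivial $\mc{G}_{F,S}$-actions on $\mf{X}_K$ and $Y_K$ yields $d\beta = \mu \cup \chi + \mr{sw}(\chi \cup \mu)$ in $C^2$. Crucially, $\ell_v(\chi) = 0$ forces $\ell_v(\beta) = 0$, so $(\beta \cup a, 0)$ is a well-defined cochain in $C^2_{c,S}(K, \mf{X}_K \cotimes{\zp} Y_K(1))$, and its $C_c$-differential equals $(\mu \cup \chi \cup a + \mr{sw}_{12}(\chi \cup \mu \cup a), 0)$, where $\mr{sw}_{12}$ swaps the first two tensor factors. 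Therefore $\mr{sw}_{12}[(\chi \cup \mu \cup a, 0)] = -[(\mu \cup \chi \cup a, 0)]$ in $H^3_{c,S}(K, \mf{X}_K \cotimes{\zp} Y_K(1))$, which under Lemma \ref{twistcoh} translates to $\mr{sw}((q_K \otimes \mr{id}_{\mf{X}_K})(\Psi_K(a))) = -\Theta_K(a)$ in $\mf{X}_K \cotimes{\zp} Y_K$. The hard part is exactly this graded-commutativity step at the compactly supported level; it succeeds because the explicit cochain $\beta$ has vanishing local component, a direct consequence of the $S$-split condition defining $Y_K$.
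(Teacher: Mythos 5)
Your proof is correct. It proves the same identity by the same underlying mechanism — graded commutativity of the cup product with $\mu$ and $\chi$ together with the local vanishing of $\chi$ — but you implement the key step very concretely where the paper is terse. The paper constructs a hexagonal diagram whose outer square commutes by ``skew-symmetry of the cup product,'' then verifies the two inner triangles via Poitou--Tate duality: the lower-right triangle is checked by passing to Pontryagin duals and applying Lemma~\ref{psixidual2} to the auxiliary extension $\mc{Z} = \mc{X} \cotimes{\zp} Y_K$, while the upper-right triangle follows from the definition of $q_K$ as $\tilde\chi_f^2$ followed by the invariant map. You instead pass through the auxiliary extension $\mc{Y} \cotimes{\zp} \mf{X}_K$ to recognize $q_K\otimes\mathrm{id}$ as a connecting map, and you make the ``skew-symmetry'' step fully explicit by writing down the $1$-cochain $\beta(\sigma) = -\mu(\sigma)\otimes\chi(\sigma)$ with $d\beta = \mu\cup\chi + \mr{sw}(\chi\cup\mu)$ and observing that $\ell_v(\chi)=0$ forces $\ell_v(\beta)=0$, so $(\beta\cup a,0)$ genuinely lies in the compactly supported complex. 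This is the real content of the paper's one-line appeal to skew-symmetry in the cone: without the vanishing local component one would only get a homotopy in ordinary, not compactly supported, cochains. Your cochain-level argument is a useful clarification of why the $S$-split condition on $Y_K$ is exactly what one needs. The one place you should be slightly more careful is your step identifying $\mu\cup_c$ with the identity of $\mf{X}_K\cotimes{\zp}Y_K$: you assert that the cup product pairing is identified with evaluation under Poitou--Tate duality, which is correct but involves a sign that the paper tracks via Lemma~\ref{psixidual2}; a brief appeal to the same lemma (or a direct check on a generator) would tighten the argument, though it does not change the conclusion since the theorem's final sign already matches.
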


\begin{proof}
	The skew-symmetry of the cup product yields the commutativity of the outer square in the
	following diagram of maps:
	$$
		 \xymatrix{
		& H^2_S(K,\mf{X}_K(1)) \ar[rd]^{\tilde{\chi}_f^2} \ar[d] \\
		H^1_S(K,\zp(1)) \ar[ru]^{\tilde{\mu}^1} \ar[rd]_{-\tilde{\chi}_f^1} 
		& \mf{X}_K \cotimes{\zp} Y_K & H^3_{c,S}(K,\mf{X}_K \cotimes{\zp} Y_K(1)) \ar[l]_-{\sim} \\
		& H^2_{c,S}(K,Y_K(1)) \ar[ru]_{\tilde{\mu}_c^2} \ar[u]^{\wr},
		}
	$$ 
	Here, the outer maps are as in Section \ref{conncup2}, where $\mu$ is restriction to $\mf{X}_K$ as above.
	The downward arrow is the composition of the natural isomorphism 
	$$
		H^2_S(K,\mf{X}_K(1)) \cong H^2_S(K,\zp(1)) \cotimes{\zp} \mf{X}_K
	$$ 
	with $\mr{sw} \circ (q_K \otimes \mr{id})$.
	Recall that $q_K$ is $\tilde{\chi}_f^2$ on 
	$H^2_S(K,\zp(1))$ followed by the invariant map.  Since the leftward arrow is the
	invariant map, the upper-right triangle commutes.
	
	The upward arrow is the composition of the isomorphism
	$$
		H^2_{c,S}(K,Y_K(1)) \cong H^2_{c,S}(K,\zp(1)) \cotimes{\zp} Y_K
	$$
	with the natural isomorphism $H^2_{c,S}(K,\zp(1)) \cong \mf{X}_K$ of Poitou-Tate duality
	(see Lemma \ref{twistcoh}b).  In other words, noting that $(\mf{X}_K \cotimes{\zp} Y_K)^{\vee}$
	may be canonically identified with $\Hom_{\cts}(\mf{X}_K,Y_K^{\vee})$, the
	Pontryagin dual of the upward arrow is the natural identification
	$$
		\Hom_{\cts}(\mf{X}_K,Y_K^{\vee}) \to H^1(G_{K,S},Y_K^{\vee}).
	$$
	Let $\mc{Z} = \mc{X} \cotimes{\zp} Y_K$, and note that 
	$$
		0 \to \mf{X}_K \cotimes{\zp} Y_K \to \mc{Z} \to Y_K \to 0
	$$
	is still exact.
	By Lemma \ref{psixidual2}, the Pontryagin dual of the map $\tilde{\mu}_c^2 = \kappa_{c,\mc{Z}}^2$
	in the diagram is the map
	$$
		\tilde{\mu}^0 = -\kappa_{\mc{Z}^{\vee}(1)}^0 \colon H^0(G_{K,S},\Hom_{\cts}(\mf{X}_K,Y_K^{\vee}))
		\to H^1(G_{K,S},Y_K^{\vee}),
	$$
	that is cup product with the projection map $\mu \in H^1(G_{K,S},\mf{X}_K)$.
	The dual of the invariant map is just the identification of 
	$H^0(G_{K,S},\Hom_{\cts}(\mf{X}_K,Y_K^{\vee}))$ with $\Hom_{\cts}(\mf{X}_K,Y_K^{\vee})$.  
	The composite map is composition with the projection, so it is again the natural identification.  
	In other words, the lower-right triangle commutes, so the left half of the diagram commutes, 
	and that is what was claimed.
\end{proof}

In general, suppose that $L$ is an $S$-ramified abelian pro-$p$ extension of $K$, and set $G = \Gal(L/K)$.  The reciprocity map induces
$$
	\Psi_{L/K} \colon \mc{U}_K \to H^2_S(K,\zp(1)) \cotimes{\zp} G,
$$
equal to $(\mr{id} \otimes \pi_G) \circ \Psi_K$, where $\pi_G \colon \mf{X}_K \to G$ is the restriction map on Galois groups.  We again refer to $\Psi_{L/K}$ as an $S$-reciprocity map, for the extension $L/K$.  Note that $\Psi_{L/K} = \widetilde{\pi_G}$ in the notation of Lemma \ref{psicomp}.

Consider the localization map
$$
	\ell_S \colon H^2_S(K,\zp(1)) \cotimes{\zp} G \to \prod_{v \in S_K} G 
$$
obtained from the isomorphisms
$$
	H^2_{\mr{cts}}(G_{E_v},\zp(1)) \cong \zp
$$
for $v \in S_E$ and $E$ a number field containing $F$. Since $\prod_{v \in S_K} \zp$ is a free profinite $\zp$-module, the map
from $Y_K \cotimes{\zp} G$ onto the kernel of $\ell_S$ is an isomorphism.

By Proposition \ref{equalcup} and a standard description of the local reciprocity map in local class field theory,
the $v$-coordinate of the composition $\rho_{L/K} = \ell_S \circ \Psi_{L/K}$ is the composition of the natural maps
$$
	\mc{U}_K \to \invlim{E_v \subset K_v} \invlim{n} E_v^{\times}/E_v^{\times p^n}
$$ 
with the local reciprocity map for the (fixed) local extension defined by $L/K$ at $v \in S_K$ and the canonical injection of its Galois group into $G$.  We will let 
$\mc{U}_{L/K}$ denote the kernel of $\rho_{L/K}$.

Theorem \ref{comparemaps} now has following corollary.

\begin{corollary} \label{firstcor}
	For any $S$-ramified abelian pro-$p$ extension $L$ of $K$ with Galois group $G$, one has 
	a commutative diagram
	$$
		 \xymatrix@C=40pt@R=30pt{
			\mc{U}_{L/K} \ar[r]^-{\Psi_{L/K}} \ar[rd]_-{(\pi_G \otimes \mr{id}) \circ \Theta_K} & 
			Y_K \cotimes{\zp} G \ar[d]^{-\mr{sw}_G} \\
			& G \cotimes{\zp} Y_K, 
		}
	$$
	where $\mr{sw}_G$ is the standard isomorphism yielding commutativity of the tensor product.
\end{corollary}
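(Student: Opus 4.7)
The plan is to derive the corollary from Theorem \ref{comparemaps} by applying appropriate tensor functors and then arguing that on the subgroup $\mc{U}_{L/K}$ the image of $\Psi_{L/K}$ already lies in $Y_K \cotimes{\zp} G$.  Since $\Psi_{L/K} = (\mr{id} \otimes \pi_G) \circ \Psi_K$, I would apply $\pi_G \otimes \mr{id}_{Y_K}$ to the identity of Theorem \ref{comparemaps}.  The swap maps satisfy the elementary compatibility $(\pi_G \otimes \mr{id}_{Y_K}) \circ \mr{sw} = \mr{sw}_G \circ (\mr{id}_{Y_K} \otimes \pi_G)$ as maps $Y_K \cotimes{\zp} \mf{X}_K \to G \cotimes{\zp} Y_K$ (both send $y \otimes x$ to $\pi_G(x) \otimes y$), so on all of $\mc{U}_K$ one obtains the general identity
$$
\mr{sw}_G \circ (q_K \otimes \mr{id}_G) \circ \Psi_{L/K} = -(\pi_G \otimes \mr{id}_{Y_K}) \circ \Theta_K. \qquad (\star)
$$

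Next I would show that for $a \in \mc{U}_{L/K}$ the element $\Psi_{L/K}(a) \in H^2_S(K, \zp(1)) \cotimes{\zp} G$ lies in the image of $\iota_K \otimes \mr{id}_G$.  Poitou--Tate duality identifies $\iota_K(Y_K)$ with the kernel of the localization map $\ell_S \colon H^2_S(K, \zp(1)) \to \prod_{v \in S_K} \zp$, and Proposition \ref{theta2} provides a splitting of this short exact sequence via $q_K$.  Since the sequence splits in the category of compact $\zp$-modules, it remains exact upon completing the tensor product with $G$, yielding
$$
0 \to Y_K \cotimes{\zp} G \xrightarrow{\iota_K \otimes \mr{id}_G} H^2_S(K, \zp(1)) \cotimes{\zp} G \xrightarrow{\ell_S} \prod_{v \in S_K} G,
$$
where for the injectivity of $\ell_S$ on the direct summand complementary to $Y_K \cotimes{\zp} G$ one can reduce to the case of finite $G$ by writing $G = \invlim{n} G_n$.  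Because $a \in \mc{U}_{L/K}$ means by definition that $\ell_S(\Psi_{L/K}(a)) = \rho_{L/K}(a) = 0$, the element $\Psi_{L/K}(a)$ lies in the image of $\iota_K \otimes \mr{id}_G$.

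To finish, note that by Proposition \ref{theta2} the map $q_K \otimes \mr{id}_G$ is a retraction of $\iota_K \otimes \mr{id}_G$, so on $\mc{U}_{L/K}$ the map $\Psi_{L/K}$, regarded as landing in $Y_K \cotimes{\zp} G$, coincides with $(q_K \otimes \mr{id}_G) \circ \Psi_{L/K}$.  Substituting this into $(\star)$ yields the commutativity claimed in the corollary.  I expect the main obstacle to be the tensor-product bookkeeping in the second paragraph: completing tensor products with a profinite module need not preserve kernels in general, and one must exploit the splitting $q_K$ to realize $\iota_K(Y_K)$ as a direct summand of $H^2_S(K, \zp(1))$ before tensoring.
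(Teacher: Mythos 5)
Your overall strategy is the natural one and almost certainly what the paper intends, as the paper gives no explicit argument and presents the corollary as immediate from Theorem \ref{comparemaps}. Step 1 (applying $\pi_G$ and commuting it past the swap to obtain $(\star)$) and step 3 (using that $q_K\otimes\mr{id}_G$ retracts $\iota_K\otimes\mr{id}_G$) are both correct.

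The gap is in step 2. The retraction $q_K$ of Proposition \ref{theta2} splits only the inclusion $\iota_K\colon Y_K\hookrightarrow H^2_S(K,\zp(1))$; it does not split the whole sequence ending in $\prod_{v\in S_K}\zp$. Consequently, writing $H^2_S(K,\zp(1)) = Y_K\oplus C$ and tensoring gives a direct sum decomposition, but it does not by itself identify $\ker(\ell_S\,\hat\otimes\,\mr{id}_G)$ with $Y_K\cotimes{\zp}G$. Your suggested reduction to finite $G$ by writing $G = \varprojlim_n G_n$ is a legitimate move (inverse limits preserve injectivity and $C\cotimes{\zp}G\cong\varprojlim_n C\otimes_{\zp}G_n$ for compact $C$), but you do not say why the finite case works. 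That requires $\mr{Tor}_1^{\zp}(\coker\ell_S, G_n)=0$, and the needed observation is that the Poitou--Tate sequence
$$
H^2_S(K,\zp(1)) \xrightarrow{\ell_S} H^2_{l,S}(K,\zp(1)) \to H^3_{c,S}(K,\zp(1)) \to H^3_S(K,\zp(1)) = 0
$$
together with Lemma \ref{twistcoh}a (so $H^3_{c,S}(K,\zp(1))\cong\zp$) shows that $\coker(\ell_S)\cong\zp$ is $\zp$-torsion-free, hence flat. Once that is inserted, $C\otimes_{\zp}G_n\to H^2_{l,S}\otimes_{\zp}G_n$ is injective for each $n$, and the argument closes. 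I would call this a missing ingredient rather than a wrong approach: you have correctly identified where the difficulty lies but have not supplied the flatness fact that resolves it.
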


This, in turn, has the following notable corollary for the extension defined by $Y_K$.

\begin{corollary} \label{antisym}
	Let $H_K$ be the maximal unramified, $S$-split abelian pro-$p$ extension of $K$.  The map 
	$$
		 \Psi_{H_K/K} \colon \mc{U}_K \to Y_K \cotimes{\zp} Y_K
	$$
	has antisymmetric image.
\end{corollary}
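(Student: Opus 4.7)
The plan is to combine Corollary \ref{firstcor} with naturality of the connecting homomorphism under the forgetful map from Selmer to ordinary cohomology. First, I would observe that $\mc{U}_{H_K/K} = \mc{U}_K$, since $H_K/K$ is by construction unramified and $S$-split; the local extension at each $v \in S_K$ is trivial, so every $v$-component of the map $\rho_{H_K/K}$ vanishes. Corollary \ref{firstcor} therefore applies to every $a \in \mc{U}_K$, supplying the identity
$$
	\Psi_{H_K/K}(a) = -\mr{sw}_{Y_K}\bigl((\pi_{Y_K} \otimes \mr{id})(\Theta_K(a))\bigr)
$$
in $Y_K \cotimes{\zp} Y_K$.

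The crux of the argument is then the auxiliary identification $(\pi_{Y_K} \otimes \mr{id})(\Theta_K(a)) = \Psi_{H_K/K}(a)$.  To prove it, I would invoke the naturality of the connecting homomorphism.  The Iwasawa-theoretic Selmer short exact sequence $0 \to C_{c,S}(K, Y_K(1)) \to C_{f,S}(K, \mc{Y}(1)) \to C_S(K, \zp(1)) \to 0$ admits a natural morphism to the ordinary short exact sequence $0 \to C_S(K, Y_K(1)) \to C_S(K, \mc{Y}(1)) \to C_S(K, \zp(1)) \to 0$ given by the first-coordinate projections of the mapping cones $C_{c,S} \to C_S$ and $C_{f,S} \to C_S$ on the left and middle terms, respectively, and by the identity on the right.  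Naturality of the connecting homomorphism then forces the forgetful map $H^2_{c,S}(K, Y_K(1)) \to H^2_S(K, Y_K(1))$ to carry $\Theta_K(a) = \kappa_{f,\mc{Y}}^1(a)$ to $\kappa_{\mc{Y}}^1(a) = \Psi_{H_K/K}(a)$.

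Under the identifications of Lemma \ref{twistcoh}, this forgetful map is, up to tensoring by $\mr{id}_{Y_K}$, the forgetful map $\mf{X}_K = H^2_{c,S}(K, \zp(1)) \to H^2_S(K, \zp(1))$ on the first tensor factor.  As recalled in Section \ref{fundex}, the latter factors as $\iota_K \circ \pi_{Y_K}$, with image $Y_K$.  Identifying $Y_K \cotimes{\zp} Y_K$ as a subgroup of $H^2_S(K, \zp(1)) \cotimes{\zp} Y_K$ via $\iota_K \otimes \mr{id}$ then yields the desired equality $(\pi_{Y_K} \otimes \mr{id})(\Theta_K(a)) = \Psi_{H_K/K}(a)$ in $Y_K \cotimes{\zp} Y_K$.

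Substituting this back into the identity from Corollary \ref{firstcor} produces $\Psi_{H_K/K}(a) = -\mr{sw}_{Y_K}(\Psi_{H_K/K}(a))$, which is precisely the claimed antisymmetry.  The main obstacle is the functoriality bookkeeping in the third step---verifying that the forgetful map truly intertwines with the tensor-product identifications of Lemma \ref{twistcoh}---but this is a consequence of the naturality of the isomorphisms constructed in the proof of that lemma.
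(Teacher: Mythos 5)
Your proposal is correct and follows the same strategy as the paper: reduce via Corollary \ref{firstcor} to the identity $(\pi_{Y_K} \otimes \mr{id}) \circ \Theta_K = \Psi_{H_K/K}$, then conclude from $\Psi_{H_K/K} = -\mr{sw}_{Y_K} \circ \Psi_{H_K/K}$. The only difference is cosmetic: the paper establishes the key identity by pointing to the explicit cup-product description $\kappa_{f,\mc{Y}}^1 = \tilde{\chi}_f^1$ from \eqref{chif} and Proposition \ref{equalcup} (the first coordinate of $\tilde{\chi}_f(f) = (\chi \cup f, -\varphi_S \cup \ell_S(f))$ recovers $\tilde{\chi}(f) = \kappa_{\mc{Y}}^1(f)$), whereas you establish it by invoking naturality of the connecting homomorphism for the morphism of short exact sequences of complexes determined by the first-coordinate projections of the mapping cones; both are routine verifications of the same fact.
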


\begin{proof}
	We apply Corollary \ref{firstcor}.
	The map $\rho_{H_K/K}$ is a sum of local reciprocity maps $\rho_v \colon \mc{U}_K \to Y_K$ that
	are all trivial since the decomposition group at $v$ in $Y_K$ is trivial by definition.  
	Hence, we have $\mc{U}_{H_K/K} = \mc{U}_K$.  
	
	We claim that
	$$
		(\pi_{Y_K} \otimes \mr{id}) \circ \Theta_K = \Psi_{H_K/K}.
	$$ 
	To see this, first recall that $\Theta_K = \tilde{\chi}_f^1 = \kappa_{f,\mc{Y}}^1$ by Proposition \ref{equalcup}.
	As $\iota_K \circ \pi_{Y_K}$ is the canonical map $H^2_{c,S}(K,\zp(1)) \to H^2_S(K,\zp(1))$, 
	we can see by the formula in \eqref{chif} that $(\pi_{Y_K} \otimes \mr{id}) \circ \kappa_{f,\mc{Y}}^1 = 
	\kappa_{\mc{Y}}^1$. In turn, this equals $(1 \otimes \pi_{Y_K}) \circ
	\kappa_{\mc{X}}^1$, which is $\Psi_{H_K/K}$ by definition.
\end{proof}

\begin{remark}
	Let us check that $\Theta_F$ agrees with the likewise-denoted map of the introduction.  
	If we compose $\Theta_F$ for $i =1$ (or $q_F$ for $i = 2$) with the map 
	$H^i_c(G_{F,S},Y_F(1)) \to H^{3-i}(G_{F,S},Y_F^{\vee})^{\vee}$
	of Poitou-Tate duality, we obtain a map $f \mapsto (g \mapsto \chi^* \,{}_c\!\cup f \cup g)$.
	On the other hand, $\widetilde{\chi^*}{}_{\!\!\!f}^2$ induces a map
	$$
		H^2(G_{F,S},Y_F^{\vee}(1)) \to H^3_c(G_{F,S},\qp/\zp(1)) \xrightarrow{\sim} \qp/\zp
	$$ 
	which we may compose with the pairing
	\begin{equation} \label{pairrev}
		H^{2-i}(G_{F,S},\zp(1)) \times H^i(G_{F,S},Y_F^{\vee}) \to H^2(G_{F,S},Y_F^{\vee}(1))
	\end{equation}
	to obtain the same map $f \mapsto (g \mapsto \chi^* \,{}_c\!\cup f \cup g)$ in the dual. 
	 
	It thus suffices to see that the map $\rho \colon 
	H^2(G_{F,S},Y_F^{\vee}(1)) \to \qp/\zp$ described in the introduction
	agrees with $\widetilde{\chi^*}{}_{\!\!\!f}^2$, which by Proposition \ref{equalcup} and the discussion of
	Section \ref{duality} is the negative of the connecting homomorphism
	 $\kappa_{f,\mc{Y}(1)^{\vee}}^2$.  The map $\rho$ is given by lifting from
	$H^2(G_{F,S},Y_F^{\vee}(1))$ to $H^2(G_{F,S},\mc{Y}^{\vee}(1))$, applying restriction and local
	splittings (i.e., $(s^{\vee})_S$) to land in $H^2_l(G_{F,S},\qp/\zp(1))$, and then applying the 
	sum of invariant maps.  The map $H^2(G_{F,S},\mc{Y}^{\vee}(1)) \to H^2_l(G_{F,S},
	\qp/\zp(1))$ that is the connecting homomorphism arising from the Selmer complex is 
	$-(s^{\vee})_S$ by definition.  The map $\kappa_{f,\mc{Y}(1)^{\vee}}^2$ is given by lifting the 
	cocycle in $Z^2(G_{F,S},\mc{Y}^{\vee}(1))$ to $C^2_f(G_{F,S},\mc{Y}^{\vee}(1))$,
	taking its coboundary, and lifting the resulting element of
	$B^3_f(G_{F,S},\mc{Y}^{\vee}(1))$ to $Z^3_c(G_{F,S},\qp/\zp(1))$.  For this, we can
	lift to $Z^2_l(G_{F,S},\qp/\zp(1))$ and then map to
	$Z^3_c(G_{F,S},\qp/\zp(1))$.  That is,  $\kappa_{f,\mc{Y}(1)^{\vee}}^2 = -\widetilde{\chi^*}{}_{\!\!\!f}^2$ 
	results from
	\begin{multline*}
		H^2(G_{F,S},Y_F^{\vee}(1)) \twoheadleftarrow 
		H^2(G_{F,S},\mc{Y}^{\vee}(1)) \\ \xrightarrow{-(s^{\vee})_S}
		H^2_l(G_{F,S},\qp/\zp(1)) \to H^3_c(G_{F,S},\qp/\zp(1)),
	\end{multline*}
	which by its description is $-\rho$.  Thus, we have the desired equality.
\end{remark}

\subsection{A special case} \label{special}

Let $F = \Q(\mu_p)$, and let $\Delta = \Gal(\Q(\mu_p)/\Q)$.  Let $S = S_{\Q} = \{p, \infty\}$, and let $\zeta_p$
denote a primitive $p$th root of unity.  Let $\omega \colon \Delta \hookrightarrow \zp^{\times}$ denote the
Teichm\"uller character.  For a $\zp[\Delta]$-module $M$ and $j \in \Z$, we let $M^{(j)}$ denote its $\omega^j$-eigenspace.    Finally, set
$$
	\eta_i = \prod_{\delta \in \Delta} (1-\zeta_p^{\delta})^{\omega^{i-1}(\delta)} \in \mc{U}_F^{(1-i)}
$$
for  odd $i \in \Z$.  

We will consider the cup product pairing
$$
	(\ ,\ )_{p,F,S} \colon H^1(G_{F,S},\mu_p) \times H^1(G_{F,S},\mu_p) \to 
	Y_F \otimes \mu_p.
$$
of \cite{mcs}, which we can view as a pairing on the group of 
elements of $F^{\times}$ which have associated fractional ideals that are
$p$th powers times a power of the prime over $p$.

Let us say that an integer $k$ is irregular for $p$ if $k$ is even, $2 \le k \le p-3$, and
$p$ divides the $k$th Bernoulli number $B_k$.
For primes of index of irregularity at least $2$ that, for instance, satisfy Vandiver's conjecture, we have the following consequence of the antisymmetry expressed in Corollary \ref{antisym}.

\begin{theorem} \label{equivvals}
	Suppose that $k$ and $k'$ are irregular for $p$ with $k < k'$ and that $Y_F^{(k)}$ and $Y_F^{(k')}$ are trivial.  Then
	$(\eta_{p-k},\eta_{k+k'-1})_{p,F,S} = 0$
	if and only if
	$(\eta_{p-k'},\eta_{k+k'-1})_{p,F,S} = 0$.
\end{theorem}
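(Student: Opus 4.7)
The plan is to apply Corollary \ref{antisym} to $\eta_{k+k'-1}$ and extract the desired equivalence from the antisymmetry of $\alpha := \Psi_{H_F/F}(\eta_{k+k'-1}) \in Y_F \cotimes{\zp} Y_F$ combined with the eigenspace vanishing hypotheses.

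By $\La$-equivariance, $\alpha$ lies in the $\omega^{2-k-k'}$-eigenspace, and by Corollary \ref{antisym} it satisfies $\mr{sw}(\alpha) = -\alpha$.  First I would decompose $(Y_F \cotimes{\zp} Y_F)^{(2-k-k')}$ into its eigenspace summands $Y_F^{(i)} \cotimes{\zp} Y_F^{(j)}$ with $i+j \equiv 2-k-k' \pmod{p-1}$, writing $\alpha = \sum \alpha_{i,j}$.  The antisymmetry translates to $\alpha_{i,j} = -\mr{sw}(\alpha_{j,i})$, and the hypothesis $Y_F^{(k)} = Y_F^{(k')} = 0$ kills $\alpha_{i,j}$ whenever either index is congruent to $k$ or $k'$ modulo $p-1$.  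In particular, the pair $\alpha_{1-k,1-k'}$ and $\alpha_{1-k',1-k}$ are interchanged up to sign by $\mr{sw}$, so these two components vanish simultaneously.

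Next, I would identify the two Sharifi pairing values as controlled precisely by these two components.  Viewing $\eta_{p-k} \bmod p$ as a Kummer character $[\eta_{p-k}] \colon \mf{X}_F \to \mu_p$ in the $\omega^k$-eigenspace (so that it factors through $\mf{X}_F^{(1-k)}$), the interpolation property of $\Psi_F$ gives $(\eta_{p-k}, \eta_{k+k'-1})_{p,F,S} = (q_F \otimes [\eta_{p-k}])(\Psi_F(\eta_{k+k'-1}))$.  Using Theorem \ref{comparemaps} to rewrite $(q_F \otimes \mr{id}) \circ \Psi_F$ as $-\mr{sw} \circ \Theta_F$ and Corollary \ref{firstcor} to relate $(\pi_{Y_F} \otimes \mr{id}) \circ \Theta_F$ to $\Psi_{H_F/F}$, one shows this value is determined by the $(1-k, 1-k')$-component of $\Theta_F(\eta_{k+k'-1})$, and that after its projection via $\pi_{Y_F}$ one recovers $\alpha_{1-k, 1-k'}$, evaluated against an induced character on $Y_F^{(1-k)}$.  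An analogous argument presents $(\eta_{p-k'}, \eta_{k+k'-1})_{p,F,S}$ as the analogous pairing against $\alpha_{1-k', 1-k}$.

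The hard part will be the nondegeneracy step:  $[\eta_{p-k}]$ does not factor through $Y_F$ (the Kummer extension $F(\eta_{p-k}^{1/p})$ is ramified at $p$), so one must show that the contribution from the $(1-k)$-eigenspace of the decomposition subgroup at $p$ (i.e., the kernel of $\mf{X}_F^{(1-k)} \to Y_F^{(1-k)}$) drops out of the pairing computation, and that the resulting induced map on $Y_F^{(1-k)}$ is injective on the relevant $\alpha_{1-k,1-k'}$-piece.  This is where the hypothesis $Y_F^{(k)} = 0$ enters via a reflection-type argument relating the $\omega^k$- and $\omega^{1-k}$-eigenspaces, together with the irregularity of $k$ ensuring $Y_F^{(1-k)} \neq 0$; the analogous statements are used on the $k'$-side.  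Granted these nondegeneracies, the simultaneous vanishing of $\alpha_{1-k,1-k'}$ and $\alpha_{1-k',1-k}$ forced by the antisymmetry of $\alpha$ yields the claimed equivalence $(\eta_{p-k}, \eta_{k+k'-1})_{p,F,S} = 0 \iff (\eta_{p-k'}, \eta_{k+k'-1})_{p,F,S} = 0$.
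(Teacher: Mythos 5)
Your overall strategy matches the paper's: apply Corollary \ref{antisym} to get antisymmetry of $\alpha := \Psi_{H_F/F}(\eta_{k+k'-1}) \in Y_F \otimes_{\zp} Y_F$, decompose by $\Delta$-eigenspaces, observe that the components in $Y_F^{(1-k)}\otimes Y_F^{(1-k')}$ and $Y_F^{(1-k')}\otimes Y_F^{(1-k)}$ are swapped up to sign, and read off the two pairing values from these components. That skeleton is correct.

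The gap is in what you call the ``hard part.''  You assert that $F(\eta_{p-k}^{1/p})/F$ is ramified at $p$, so that $[\eta_{p-k}]$ does not factor through $Y_F$, and you propose a nondegeneracy/reflection argument to show the decomposition-group contribution ``drops out.''  This has the situation backwards.  The hypothesis that $k$ is irregular for $p$, i.e.\ $p \mid B_k$, is precisely what makes $\eta_{p-k}$ a local $p$th power at the prime above $p$ (a classical consequence of the explicit reciprocity law; cf.\ \cite{mcs}).  Hence $F(\eta_{p-k}^{1/p})/F$ is \emph{unramified} and split at $p$, and the Kummer character $\chi_{p-k}$ genuinely factors through a homomorphism $Y_F \to \mu_p$ --- which is exactly the fact the paper's proof uses when it writes $\chi_{p-r} \colon Y_F \to \mu_p$.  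There is no local contribution to cancel.  What the hypothesis $Y_F^{(k)} = Y_F^{(k')} = 0$ actually buys, together with Herbrand--Ribet and Leopoldt's reflection theorem, is that $Y_F^{(1-k)}$ and $Y_F^{(1-k')}$ are one-dimensional over ${\bf F}_p$ and that $\chi_{p-k},\chi_{p-k'}$ are nonzero; so your reflection instinct is aimed at the wrong target.  With these facts one extends $\{\chi_{p-k},\chi_{p-k'}\}$ to a basis of $\Hom(Y_F,\mu_p)$, takes the dual basis $\{\sigma_k,\sigma_{k'},\ldots\}$ of $Y_F/p$ with $\sigma_r \in Y_F^{(1-r)}$, and identifies each pairing value with the $\sigma_{k'}\otimes\sigma_k$- (resp.\ $\sigma_k\otimes\sigma_{k'}$-) coefficient of $\alpha \bmod p$; antisymmetry then gives $[\eta_{p-k},\eta_{k+k'-1}]_{k'} = -[\eta_{p-k'},\eta_{k+k'-1}]_k$ and the equivalence follows.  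Without recognizing that $\chi_{p-k}$ already lives on $Y_F$, the identification of the pairing values with the components $\alpha_{1-k',1-k}$ and $\alpha_{1-k,1-k'}$ does not go through as you describe.
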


\begin{proof}
	For any $r$ that is irregular for $p$, we let
	$\chi_{p-r} \colon Y_F \to \mu_p$ denote the Kummer character attached to $\eta_{p-r}$.
	Supposing that $Y_F^{(r)} = 0$, fix an 
	$\mb{F}_p$-basis of $\Hom(Y_F,\mu_p)$ which contains the nontrivial $\chi_{p-r}$ 
	and a dual basis of $Y_F/p$ containing elements $\sigma_r \in Y_F^{(1-r)}$ for each such character
	with $\chi_{p-r}(\sigma_r) = \zeta_p$ and $\chi_{p-r}$ trivial on all other elements of the basis.  
	
	Since
	$$
		\chi_{p-k}(\kappa_{\mc{Y}}^1(\eta_{k+k'-1})) = (\eta_{p-k},\eta_{k+k'-1})_{p,F,S} \in Y_F^{(1-k')} \otimes \mu_p,
	$$
	the coefficient of $\sigma_{k'} \otimes \sigma_k$ in the
	expansion of $\kappa_{\mc{Y}}^1(\eta_{k+k'-1})$ mod $p$ 
	in terms of the standard basis of the tensor product 
	is  $[\eta_{p-k},\eta_{k+k'-1}]_{k'}$,
	where 
	$$
		(\eta_{p-k},\eta_{k+k'-1})_{p,F,S} = \sigma_{k'} \otimes \zeta_p^{[\eta_{p-k},\eta_{k+k'-1}]_{k'}}.
	$$
	Similarly, the coefficient of $\sigma_k \otimes \sigma_{k'}$ is the analogously defined
	$[\eta_{p-k'},\eta_{k+k'-1}]_k$.  The antisymmetry of Corollary \ref{antisym} forces
	$$
		[\eta_{p-k},\eta_{k+k'-1}]_{k'} = -[\eta_{p-k'},\eta_{k+k'-1}]_k,
	$$
	and, in particular, the result.
\end{proof}

This phenomenon can be seen in the tables of the pairing values for $p < 25,\!000$ produced by the author and McCallum (see \cite{mcs}).  What is remarkable about Theorem
\ref{equivvals} is that it relates pairing values in distinct eigenspaces of $Y_F$.

\begin{remark}
Note that the condition for the vanishing of these pairing values appears in the statement of \cite[Theorem 5.2]{me-gal}.  We remark that there is a mistake in said statement that is rendered inconsequential by Theorem \ref{equivvals}.  That is, it was accidentally only assumed there that $(\eta_{p-k},\eta_{k+k'-1} )_{p,F,S} = 0$ for all $k' > k$, while this vanishing for all $k' \neq k$ is used in the proof (although at one point the condition that $k' > k$ is again written therein).
\end{remark}

\section{Higher $S$-reciprocity maps} \label{higher}

In this section, we consider higher $S$-reciprocity maps, which are generalized Bockstein maps related to those of \cite{llsww}. We employ them to study the graded quotients in the augmentation filtration of completely split Iwasawa modules over Kummer extensions. We provide an exact sequence into which these graded quotients fit in Theorem \ref{exseqgraded}.

As in earlier sections, let $p$ be a prime, let $F$ be a global field of non-$p$ characteristic, and let $S$ be a finite primes of $F$ containing the
primes over $p$ and all real places. We continue to suppose that $p$ is odd or $F$ has no real places.

Let $L$ be an $S$-ramified Galois extension of $F$.  We assume that $G = \Gal(L/F)$ is almost pro-$p$ in the sense that it contains an open normal pro-$p$ subgroup.  We now let $K$ be a Galois subextension of $L/F$ and set $H = \Gal(L/K)$ so that 
$$
	\Gamma = \Gal(K/F) \cong G/H.
$$

Let $R$ be a complete commutative local Noetherian $\zp$-algebra with finite residue field.  
Let $T$ be an $R\ps{G_{F,S}}$-module that is finitely generated over $R$. We suppose that either
\begin{enumerate}
	\item[i.] $T$ is $R$-projective, or
	\item[ii.] $L/F$ is a $p$-adic Lie extension.
\end{enumerate}

In addition to $\La = R\ps{\Gamma}$, we set $\Omega = R\ps{H}$.  
If $L/F$ is a $p$-adic Lie extension, then so are $L/K$ and $K/F$, and in particular $\La$ and $\Omega$ are noetherian.

\subsection{Duality for finitely generated modules}

In this subsection, we establish some necessary background, in particular by providing a description in Theorem \ref{cokerSrec} of graded quotients of second Iwasawa cohomology groups.

The following is a consequence of \cite[Proposition 1.6.5]{fk} and \cite[Propositions 4.1.1 and 4.1.3]{ls}. 

\begin{proposition}[Fukaya-Kato, Lim-Sharifi] \label{spectral-global}
	We have homological spectral sequences of finitely generated $\Lambda$-modules
	\begin{align*}
		&\mb{J} = \mb{J}(T) \colon && 
		J^2_{i,j} = H_i(H, H^{2-j}_S(L,T) ) 
		&&\Rightarrow&& J_{i+j} = H^{2-i-j}_S(K,T),\\
		&\mb{P} = \mb{P}(T) \colon &&
		P^2_{i,j} =  H_i(H,H^{2-j}_{l,S}(L,T))
		&&\Rightarrow&& P_{i+j} = H^{2-i-j}_{l,S}(K,T),\\
		&\mb{C} = \mb{C}(T) \colon &&
		C^2_{i,j} = H_i(H, H^{3-j}_{c,S}(L,T))
		&&\Rightarrow&& C_{i+j} = H^{3-i-j}_{c,S}(K,T),
	\end{align*}
	that are natural in $T$
	and an exact sequence of spectral sequences
	$$
		\cdots \to \mb{J} \to \mb{P} \to \mb{C} \to \mb{J}[1] \to \cdots,
	$$
	where $\mb{J}[1]$ is the spectral sequence that is the shift of $\mb{J}$ with its second page having $(i,j)$-term $J_{i,j-1}^2$.
\end{proposition}

Let $I$ denote the augmentation ideal of $\Omega$.  Fix $n \ge 0$, 
and suppose either that $T$ is $R$-flat or that both $\Omega/I^n$ and $I^n/I^{n+1}$ are $R$-flat.  
Consider the exact sequence of $\Omega[G_{F,S}]$-modules
$$
	0 \to I^n/I^{n+1} \otimes_R T \to \Omega/I^{n+1} \otimes_R T \to \Omega/I^n \otimes_R T \to 0.
$$
Let
$$
	\Psi_{L/K,T}^{(n)} \colon H^1_S(K,\Omega/I^n \otimes_R T) \to H^2_S(K,T) \cotimes{R} I^n/I^{n+1}
$$
be the resulting connecting map.  Here, we have used the isomorphism
$$
	H^2_S(K,I^n/I^{n+1} \otimes_R T) \cong H^2_S(K,T) \cotimes{R} I^n/I^{n+1}
$$
that exists as $G_{F,S}$ has $p$-cohomological dimension $2$ and $I^n/I^{n+1}$ has trivial $G_{K,S}$-action
(cf. Lemma \ref{twistcoh}).
Note that the map $\Psi_{L/K,T}^{(0)}$ is a map from the zero group to $H^2_S(K,T)$.

As a consequence of \cite[Remark 3.2.5]{llsww}, we have the following description of the cokernel
of $\Psi_{L/K,T}^{(n)}$.\footnote{For this, note that nothing in \cite{llsww} required that $H$ be topologically finitely generated,
aside from a desire not to write completed tensor products.}

\begin{theorem} \label{cokerSrec}
	For each $n \ge 0$, we have a natural isomorphism of $\Lambda$-modules
	$$
		\coker \Psi_{L/K,T}^{(n)} \cong \frac{I^n H^2_S(L,T)}{I^{n+1} H^2_S(L,T)}.
	$$
\end{theorem}

\begin{remark}
	We also have the analogous isomorphisms for our semilocal Iwasawa cohomology groups
	$H^2_{l,S}(L,T)$ and for the Iwasawa cohomology groups of extensions of a local field of residue characteristic $p$.
\end{remark}

We next review the two primary ingredients in the proof of Theorem \ref{cokerSrec} that we shall require.
For a compact $\Omega$-module $A$, we let
$$
	(I^n \cotimes{\Omega} A)^{\circ} = \ker(I^n \otimes_{\Omega} A \to I^nA)
$$  
where the map is induced by multiplication.
The statement on the cokernel in the following lemma is \cite[Lemma 3.2.3]{llsww} and is the first of two key ingredients
in the proof of Theorem \ref{cokerSrec}.   The statement on the kernel is new and is used in Section \ref{control}.

\begin{lemma} \label{bocklemma}
	Let $A$ be a compact $\Omega$-module, and consider the exact sequence
	\begin{equation} \label{bockstein}
		0 \to I^n/I^{n+1} \cotimes{R} A \to \Omega/I^{n+1} \cotimes{R} A \to \Omega/I^n \cotimes{R} A \to 0.
	\end{equation} 
	The connecting homomorphism
	$$
		\partial_n \colon H_1(H,\Omega/I^n \cotimes{R} A) \to A_H \cotimes{R} I^n/I^{n+1}
	$$
	in the $H$-homology of \eqref{bockstein}
	has cokernel isomorphic to $I^n A/I^{n+1} A$. 
	 If $I^n$ is $\Omega$-flat, then $\partial_n$ has kernel isomorphic to $(I^n \cotimes{\Omega} IA)^{\circ}$.
\end{lemma}

\begin{proof}
	Note that $(\Omega/I^m \cotimes{R} A)_H \cong A/I^mA$ for all $m \ge 0$.
	The cokernel of $\partial_n$ is then identified with the kernel of the
	quotient map $A/I^{n+1}A \to A/I^nA$, which is $I^nA/I^{n+1}A$.
	Since $A \cotimes{R} \Omega$ is $H$-acyclic for homology, we have Rthat	
	\begin{equation} \label{H1Omegan} 
		H_1(H,\Omega/I^n \cotimes{R} A) \cong 
		\ker ((I^n \cotimes{R} A)_H \to (\Omega \cotimes{R} A)_H)
		\cong (I^n \cotimes{\Omega} A)^{\circ}.
	\end{equation} 
	The map $\partial_n$ is induced by the identity on $I^n \otimes_{\Omega} A$.
	In particular, $\partial_n$ can be identified with a restriction of the map 
	$$
		I^n \cotimes{\Omega} A \to I^n \cotimes{\Omega} A_H.
	$$
	If $I^n$ is flat over $\Omega$, then the latter map has kernel equal to 
	$I^n \cotimes{\Omega} I A$, and on the subgroup $(I^n \cotimes{\Omega} A)^{\circ}$ it has the stated kernel.  
\end{proof}

The second key ingredient from \cite{llsww} is the commutative diagram of $\Lambda$-module homomorphisms
\begin{equation} \label{keysq}
	 \xymatrix{
		H^1_S(K,T \otimes_R \Omega/I^n) \ar[r]^-{\Psi_{L/K,T}^{(n)}} \ar@{->>}[d] & 
		H^2_S(K,T) \cotimes{R} I^n/I^{n+1} \ar[d]^-{\wr} \\
		H_1(H,H^2_S(L,T) \cotimes{R} \Omega/I^n) \ar[r]^-{\partial_n} & H^2_S(L,T)_H \cotimes{R} I^n/I^{n+1},
	}
\end{equation}
where $\partial_n$ is the connecting homomorphism \eqref{bockstein} for $A = H^2_S(L,T)$.  
We then apply Lemma \ref{bocklemma} to obtain Theorem \ref{cokerSrec}
as the isomorphism on cokernels of the two horizontal maps in \eqref{keysq}.

\subsection{Control theorem for higher graded quotients} \label{control}

We turn to the consideration of the main result of this section.  Suppose now that $R = \zp$ and $T = \zp(1)$.
We refer to 
$$
	\Psi_{L/K}^{(n)} = \Psi_{L/K,\zp(1)}^{(n)} \colon H^1_S(K,\Omega/I^n(1)) \to H^2_S(K,\zp(1)) \cotimes{\zp} I^n/I^{n+1}
$$ 
as the $n$th higher $S$-reciprocity map for $L/K$.  
In the case that $H = \mf{X}_K$, we could set 
$\Psi_K^{(n)} = \Psi_{L/K}^{(n)}$,
as in the case $n = 1$.  However, in this section, we assume that $H \cong \zp$.
Under this assumption, $I^n/I^{n+1} \cong H^{\otimes n}$ is free of rank $1$ over $\zp$, and the surjection $\Omega/I^{n+1} \to \Omega/I^n$ is $\zp$-split, since $\Omega/I^n$ is $\zp$-free.

Let us first review the case $n = 0$. We have the map $R_{L/K} \colon (Y_L)_H \to Y_K$ on completely split Iwasawa modules 
that is restriction on Galois groups,
the map $N_{L/K} \colon \mc{U}_L \to \mc{U}_K$ on inverse limits of $S$-unit groups induced by norm maps, 
and the map $\Sigma_{L/K}$
that is the inverse limit of sums of inclusion maps
$$
	\bigoplus_{u \in S_{K'}} \Gal(L'/K')_u \to \Gal(L'/K')
$$ 
of decomposition groups taken over number fields $L' \subset L$, where $K' = K \cap L'$.
By \cite[Corollary A.2]{hs} (in the case that the set of primes there is taken to be $S$), we have a canonical exact sequence
\begin{equation} \label{n=0seq}
	0 \to Y_L^H \otimes_{\zp} H \to \mc{U}_K/N_{L/K}\mc{U}_L \to \ker \Sigma_{L/K} \to (Y_L)_H \xrightarrow{R_{L/K}} Y_K
	\to \coker \Sigma_{L/K} \to 0
\end{equation}
of $\Lambda$-modules
that ties these groups and maps all together. In Theorem \ref{exseqgraded} below, we 
extend this to higher graded quotients of $Y_L$ in the augmentation filtration.
    
Now let us turn to the case of general $n$. For brevity, for any compact $\zp$-module $M$, let us set  
$$
	M\{n\} = M \cotimes{\zp} I^n/I^{n+1},
$$
which is of course noncanonically isomorphic to $M$ as a $\zp$-module.
For any $\Omega$-module $A$, let ${}_{I^n} A$ denote the submodule of elements of $A$ annihilated by $I^n$.
We have canonical isomorphisms
\begin{equation} \label{H1HZp}
	H_1(H,A \cotimes{R} \Omega/I^n) \cong (I^n \cotimes{\Omega} A)^{\circ} \cong ({}_{I^n} A)\{n\},
\end{equation}
the first by \eqref{H1Omegan}.
Moreover, since $H$ has $p$-cohomological dimension $1$, we have exact sequences
\begin{equation} \label{threeterm}
	0 \to E_{0,j}^2 \to E_j \to E_{1,j-1}^2 \to 0
\end{equation}
for $\mb{E} = (E_{i,j}^r,E_{i+j})$ any of the spectral sequences $\mb{C}$, $\mb{P}$, and $\mb{J}$.

\begin{lemma} \label{local_bound_inj}
	The semilocal connecting map
	$$
		\partial_l^{(n)} \colon {}_{I^n} H^2_{l,S}(L,\zp(1))\{n\} \to H^2_{l,S}(L,\zp(1))_H\{n\}
	$$
	as in Lemma \ref{bocklemma} with $A = H^2_{l,S}(L,\zp(1))$, and given the isomorphism of \eqref{H1HZp}, is injective.
\end{lemma}

\begin{proof}
	By Lemma \ref{bocklemma} and the assumption that $H \cong \zp$, we have
	$$
		\ker \partial_l^{(n)} \cong {}_{I^n}IH_{l,S}^2(L,\zp(1))\{n\}.
	$$
	From this, we see that the result for all $n$ is equivalent to the triviality of the $H$-invariant group of
	$I H_{l,S}^2(L,\zp(1))$.
	
	Note that
	$$
		H_{l,S}^2(L,\zp(1)) \cong \prod_{v \in S_K}
		\zp\ps{H/H_v}, 
	$$
	where $H_v$ denotes the decomposition group at a prime above $v$.
	If $H_v = 0$, then the submodule of $H$-fixed elements is trivial. 
	Otherwise $H/H_v$ is finite, and only the span of 
	the norm element in $\zp\ps{H/H_v}$ 
	is fixed by $H$, but the norm element is not contained in $I\zp\ps{H/H_v}$.
\end{proof}

Let $\mc{U}_{L/K}^{(n)}$ be the global kernel of the $n$th semilocal higher reciprocity map
$$
	H^1_S(K,\Omega/I^n(1)) \to H_{l,S}^2(K,\zp(1))\{n\}
$$
(i.e., the composition of $\Psi_{L/K}^{(n)}$ with the sum of local restriction maps), and let
$$
	\psi_{L/K}^{(n)} \colon \mc{U}_{L/K}^{(n)} \to Y_K\{n\}
$$
denote the map induced by the restriction of $\Psi_{L/K}^{(n)}$ to $\mc{U}_{L/K}^{(n)}$.

The exact sequence in the following is an extension of \cite[Theorem 6.3]{me-massey}, which in essence gives its last four terms.

\begin{theorem} \label{exseqgraded}
	If $L/K$ is a $\zp$-extension, then we have a canonical exact sequence
	\begin{multline*}
		0 \to {}_{I^n}(I Y_L)\{n\} 
		\to \frac{\ker \psi_{L/K}^{(n)}}{N_{L/K}(\mc{U}_L \otimes_{\zp} 
		\Omega/I^n)} 
		\to (\ker R_{L/K})\{n\}
		\\\to \frac{I^n Y_L}{I^{n+1} Y_L}
		\xrightarrow{R_{L/K}^{(n)}} \coker \psi_{L/K}^{(n)}  
		\to (\coker R_{L/K})\{n\} \to 0
	\end{multline*}
	of $\Lambda$-modules.
\end{theorem}

\begin{proof}	
	If $n = 0$, this follows by definition of $R_{L/K}$ and $\psi_{L/K}^{(0)} \colon 0 \to Y_K$, so we may assume $n \ge 1$.
	Since $H$ has $p$-cohomological dimension $1$, employing \eqref{H1HZp}, we have
	$$
		{}_{I^n} Y_L \cong \ker({}_{I^n} H^2_S(L,\zp(1))  \to {}_{I^n} H^2_{l,S}(L,\zp(1))).
	$$
	The map $J_1 \to J_{1,0}$ with $\Omega/I^n(1)$-coefficients that is identified by \eqref{H1HZp} with a map
	$$
		H^1_S(K,\Omega/I^n(1)) \to {}_{I^n} H^2_S(L,\zp(1))\{n\}
	$$
	therefore restricts to a map $\xi_{L/K}^{(n)} \colon
	\mc{U}_{L/K}^{(n)} \to {}_{I^n} Y_L\{n\}$.  
	Since $\partial_l^{(n)}$ is injective by Lemma \ref{local_bound_inj}, we also have
	$$
		{}_{I^n} Y_L \cong \ker({}_{I^n} H^2_S(L,\zp(1))  \to H^2_{l,S}(L,\zp(1))_H)
	$$
	and thereby a map of exact sequences
	$$
		 \xymatrix{
			0 \ar[r] & \mc{U}_{L/K}^{(n)} \ar[r] \ar[d]^{\xi_{L/K}^{(n)}} & H^1_S(K,\Omega/I^n(1)) \ar[r] \ar@{->>}[d] & 
			H^2_{l,S}(K,\zp(1))\{n\} \ar[d]^{\wr} \\
			0 \ar[r] & {}_{I^n} Y_L\{n\} \ar[r] & {}_{I^n} H^2_S(L,\zp(1))\{n\}
			\ar[r] & H^2_{l,S}(L,\zp(1))_H\{n\},
		}
	$$
	with the rightmost map being the inverse of the semilocal corestriction by \eqref{keysq}.
	Chasing the diagram and applying the spectral sequence $\mb{J}$ for $\Omega/I^n(1)$, 
	we conclude that $\xi_{L/K}^{(n)}$ is surjective with kernel
	$$
		\ker \xi_{L/K}^{(n)} \cong 
		N_{L/K}(\mc{U}_L \otimes_{\zp} \Omega/I^n).
	$$
	(In fact, the latter group is isomorphic to $H^1_S(L,\Omega/I^n(1))_H$ 
	by the exactness of \eqref{threeterm} for $\mb{J}$.)

	Next, we have a commutative square
	$$
		 \xymatrix@C=30pt@R=30pt{
			{}_{I^n} Y_L\{n\} \ar[r] & (Y_L)_H\{n\} \ar[d]^{R_{L/K} \otimes \mr{id}} \\
			\mc{U}_{L/K}^{(n)} \ar[r]^{\psi_{L/K}^{(n)}} \ar@{->>}[u]^{\xi_{L/K}^{(n)}} & Y_K\{n\}
		}
	$$
	where the maps arise as restrictions of the maps in \eqref{keysq}
	and hence by Lemma \ref{bocklemma} a map 
	$$
		R_{L/K}^{(n)} \colon I^n Y_L /I^{n+1} Y_L \to \coker \psi_{L/K}^{(n)}
	$$
	on cokernels of the horizontal maps.  
	Set $\mc{K}^{(n)} = N_{L/K}(\mc{U}_L \otimes_{\zp} \Omega/I^n)$.  The result follows from the map of exact sequences
	$$
		\footnotesize
		 \xymatrix{
			0  \ar[r] & {}_{I^n} (IY_L)\{n\} \ar[r] \ar[d] &
			{}_{I^n} Y_L\{n\} \ar[r] \ar[d]^{\wr} & (Y_L)_H\{n\} 
			\ar[d]^{R_{L/K} \otimes \mr{id}} \ar[r] & I^n Y_L /I^{n+1} Y_L \ar[d]^{R_{L/K}^{(n)}} \ar[r]
			& 0 \\
			0 \ar[r] & (\ker \psi_{L/K}^{(n)})/\mc{K}^{(n)} \ar[r] & \mc{U}_{L/K}^{(n)}/\mc{K}^{(n)} 
			\ar[r] & Y_K\{n\} \ar[r] & \coker \psi_{L/K}^{(n)} \ar[r] & 0,
		}
	$$
	where we have applied Lemma \ref{bocklemma} to get the upper left term.
\end{proof}

\begin{remark}
	In the proof of Lemma \ref{local_bound_inj}, 
	instead of $T = \zp(1)$, we can take $T$ to be any compact $R\ps{G_{F,S}}$-module (for any $R$ as before) such that 
	$T(-1)$ has trivial $G_{K,S}$-action.  In fact, what is actually needed is that the $H$-invariant group of $IH^2_{l,S}(L,T)$ is zero.  
	Theorem \ref{exseqgraded} then holds with $\zp(1)$ replaced by any such $T$ in the definitions and its statements.
\end{remark}

\appendix
\section{Continuous cohomology} \label{ctscoh}

In this appendix, we study the continuous cohomology of a profinite group $G$ with coefficients in topological $\La\ps{G}$-modules for a profinite ring $\La$.  We want to view such cohomology groups themselves as topological $\La$-modules.
As we have not found convenient references for the results we shall need, we list them here, leaving
the proofs to the reader.

\subsection{Topological modules over a profinite ring} \label{topmod}

Let $\La$ be a profinite ring, which is to say a compact, Hausdorff, totally disconnected topological ring with a basis of open neighborhoods of zero consisting of ideals of finite index.  
We denote the category of Hausdorff $\La$-modules with continuous $\La$-module homomorphisms by $\mc{T}_{\La}$, its full subcategory of locally compact (Hausdorff) $\La$-modules by $\mc{L}_{\La}$ and the full, abelian subcategories
of compact $\La$-modules and discrete $\La$-modules by $\C_{\La}$ and $\D_{\La}$, respectively.  The category of all $\La$-modules, with $\La$-module homomorphisms, will be denoted $\Mod_{\La}$.  We write $X \in \mc{A}$ to indicate that $X$ is an object of a category $\mc{A}$.  

We begin with the following standard facts.

\begin{lemma}
	Inverse (resp., direct) limits of objects in the category $\mc{T}_{\La}$ exist, and they
	are endowed with the initial (resp., final) topology with respect to the inverse (resp.,
	direct) system defining the limit.
\end{lemma}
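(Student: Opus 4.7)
The plan is to construct each limit by first forming the algebraic limit as a $\La$-module and then endowing it with a natural topology, verifying that it lies in $\mc{T}_{\La}$ and satisfies the appropriate universal property.

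For the inverse limit of a system $(X_i, f_{ij})$ in $\mc{T}_{\La}$, I would take the algebraic inverse limit $X = \varprojlim X_i$ as a $\La$-submodule of $\prod_i X_i$, topologized as a subspace of the product. This subspace topology coincides with the initial topology for the family of projections $X \to X_i$. A product of Hausdorff topological $\La$-modules is a Hausdorff topological $\La$-module, and these properties pass to submodules, so $X$ lies in $\mc{T}_{\La}$. The universal property is inherited from that of the product: a compatible family of continuous $\La$-linear maps $Y \to X_i$ from $Y \in \mc{T}_{\La}$ assembles into a unique continuous $\La$-linear map $Y \to \prod_i X_i$ whose image lies in $X$, and continuity into the subspace $X$ is equivalent to continuity into $\prod_i X_i$ with image in $X$.

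For the direct limit of a system $(X_i, g_{ij})$, I would take the algebraic colimit $X' = \varinjlim X_i$ realized as a quotient of $\bigoplus_i X_i$, and equip it with the final topology with respect to the canonical maps $q_i \colon X_i \to X'$, equivalently the quotient of the coproduct topology on $\bigoplus_i X_i$ in topological $\La$-modules. To obtain an object of $\mc{T}_{\La}$, I would then pass to the Hausdorff quotient $X = X'/\overline{\{0\}}$. Continuity of the module operations on $X$ follows from their continuity on $X'$ together with the fact that the quotient map is continuous and open, and the universal property then holds: a compatible family of continuous $\La$-linear maps $X_i \to Y$ into a Hausdorff $Y$ factors uniquely through $X'$ algebraically; the factored map is continuous because continuity out of a final topology is detected on each $X_i$; and it descends to $X$ because $Y$ is Hausdorff.

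The main obstacle is Hausdorffness of the direct limit, since the naive final topology on $X'$ need not be Hausdorff: the submodule of relations defining $X'$ as a quotient of $\bigoplus_i X_i$ may fail to be closed in the coproduct topology. Replacement by the Hausdorff reflection $X'/\overline{\{0\}}$ is the standard remedy and does not disturb the universal property in $\mc{T}_{\La}$, since factorizations always exist into Hausdorff targets. The remaining verifications — that the coproduct topology on $\bigoplus_i X_i$ makes addition and $\La$-action continuous, and that these properties survive passage to the quotient — are routine, reducing to the analogous facts for Hausdorff topological abelian groups together with coordinatewise continuity of scalar multiplication.
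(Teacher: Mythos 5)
The paper does not give a proof of this lemma; it is simply asserted as a ``standard fact'' immediately before Lemma A.2 (whose reference to \cite{rz} concerns that next lemma, not this one). So there is no argument in the paper to compare against, and the question is only whether your construction is sound.

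For inverse limits your argument is complete and correct: realizing $\varprojlim X_i$ as a closed submodule of $\prod_i X_i$ with the subspace topology (which is the initial topology), checking that products and submodules of Hausdorff topological $\La$-modules remain Hausdorff topological $\La$-modules, and deriving the universal property from that of the product, is exactly the right proof. For direct limits, forming the coproduct in topological $\La$-modules, passing to the quotient by the relation submodule with the quotient topology (which is again a topological $\La$-module since the quotient map is open), and then taking the Hausdorff reflection $X = X'/\overline{\{0\}}$, does produce the colimit in $\mc{T}_{\La}$, and you are right that Hausdorffness is the one genuine obstacle and that the reflection does not disturb the universal property for Hausdorff targets. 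The only thing to tighten is the word ``equivalently'': the final topology on $X'$ with respect to the $q_i$ taken in the category of \emph{topological spaces} need not make addition or the $\La$-action continuous (for an infinite coproduct the final topology from the inclusions is generally strictly finer than the coproduct topology in topological modules), so it can differ from the quotient-of-coproduct topology you actually use. The correct reading of ``final topology'' in the lemma --- and what your construction produces --- is the finest topology making $X$ a Hausdorff topological $\La$-module with all $q_i$ continuous, i.e., the final structure in $\mc{T}_\La$ rather than in $\mathbf{Top}$. With that reading your proof is correct; in the paper's actual uses (direct limits of discrete modules, where the colimit is discrete) the distinction is invisible, but for the lemma stated at this level of generality the caveat is worth having.
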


For a proof of the following, see \cite[Lemma 5.1.1]{rz}.  

\begin{lemma} \label{limfin}
	Every finite object in $\T_{\La}$ is discrete.
	Every object in $\C_{\La}$ is an inverse limit of finite $\La$-module quotients, and
	every object in $\D_{\La}$ is a direct limit of finite $\La$-submodules.
\end{lemma}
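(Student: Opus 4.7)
The plan is to handle the three assertions in order of increasing difficulty. For the first, if $X \in \T_{\La}$ is finite and Hausdorff, then each singleton is closed, so every subset of $X$ is a finite union of closed sets and hence also open; thus $X$ carries the discrete topology. For the statement about $\D_{\La}$, I would start with $A \in \D_{\La}$ and $a \in A$ and observe that the orbit map $\mu_a \colon \La \to A$ sending $\lambda \mapsto \lambda a$ is continuous. Since $A$ is discrete, the fiber $\mu_a^{-1}(0) = \mathrm{Ann}(a)$ is an open ideal of $\La$, so the cyclic submodule $\La a \cong \La/\mathrm{Ann}(a)$ is compact and discrete, hence finite. Finite sums of cyclic submodules are then finite, and $A$ is the direct limit of the finite submodules $\La F$ as $F$ ranges over finite subsets.

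For $\C_{\La}$, the strategy is to show that any $X \in \C_{\La}$ admits a neighborhood basis of $0$ consisting of open $\La$-submodules of finite index, after which $X \cong \invlim{V} X/V$ follows with each $X/V$ a finite discrete $\La$-module. My first step is to verify that $X$ is totally disconnected. The connected component $X_0$ of $0$ is a closed $\La$-submodule, since each $\lambda X_0$ is a connected set containing $0$ and so must lie in $X_0$. Given $x \in X_0$, the orbit map $\mu_x$ factors as a continuous bijection of compact Hausdorff groups $\La/\mathrm{Ann}(x) \to \La x$, which is automatically a homeomorphism; but $\La/\mathrm{Ann}(x)$ is profinite, so $\La x$ is a totally disconnected subspace of the connected set $X_0$ and therefore trivial. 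This forces $x = 1 \cdot x = 0$, so $X_0 = 0$, and the standard structure theory of compact Hausdorff totally disconnected abelian topological groups then provides a neighborhood basis of $0$ by open subgroups.

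Finally, I would refine each such open subgroup $U$ to an open $\La$-submodule $V \subseteq U$ by setting $V = \{x \in X : \La x \subseteq U\}$. The submodule property is immediate; openness follows from continuity of the action and compactness of $\La$, as for $x \in V$ and each $\lambda \in \La$ one chooses open neighborhoods $I_\lambda \ni \lambda$ in $\La$ and $W_\lambda \ni x$ in $X$ with $I_\lambda W_\lambda \subseteq U$, then extracts a finite subcover $I_{\lambda_1}, \dots, I_{\lambda_n}$ of $\La$ to get the open neighborhood $\bigcap_i W_{\lambda_i} \subseteq V$ of $x$. I expect the total-disconnectedness step, together with this last compactness argument for openness of $V$, to be the main technical content of the proof; the remaining assertions reduce to routine point-set topology.
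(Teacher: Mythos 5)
Your treatment of the finite and discrete cases is correct: the finite Hausdorff argument is standard, and for $A$ discrete the openness of $\mathrm{Ann}(a) = \mu_a^{-1}(0)$ gives $\La a$ compact and discrete, hence finite, so $A$ is the directed union of its finite submodules $\La F$. The compact case, however, has a genuine gap at the total-disconnectedness step: from ``$\La x$ is a totally disconnected subspace of the connected set $X_0$'' you conclude ``therefore trivial,'' but that inference is false. A connected compact abelian group can contain nontrivial totally disconnected, even finite, closed subgroups --- for instance $\tfrac{1}{2}\Z/\Z \subset \R/\Z$ --- and nothing in your argument about the submodule $\La x$ rules this out. So $X_0 = 0$ is not established, and the rest of your plan for $\C_{\La}$ (van Dantzig to get a basis of open subgroups, then shrinking each open subgroup $U$ to the open $\La$-submodule $V = \{x : \La x \subseteq U\}$, which is correct as you wrote it and does require $U$ to be a subgroup for $V$ to be closed under addition) is left hanging on this unproved claim.

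For what it is worth, the paper gives no proof of its own and simply cites \cite[Lemma 5.1.1]{rz}. The cleanest repair, and the one implicit in that reference, is to deduce the compact case from the discrete case you already have via Pontryagin duality: for $X \in \C_{\La}$, the dual $X^{\vee}$ is discrete, and for each $\chi \in X^{\vee}$ the map $\lambda \mapsto \lambda\chi$ is the adjoint of the continuous pairing $(\lambda,x)\mapsto \chi(\lambda x)$ on $\Lo \times X$, hence a continuous map from $\Lo$ to the discrete group $X^{\vee}$ and therefore locally constant. Thus $X^{\vee}$ lies in $\D_{\Lo}$, is a directed union of finite $\Lo$-submodules by your discrete argument, and dualizing back exhibits $X$ as an inverse limit of finite $\La$-module quotients. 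Alternatively, to patch your own step in place, apply the discrete case to $X_0^{\vee}$: it is a torsion-free discrete $\Lo$-module (torsion-free because $X_0$ is connected) that is a union of finite submodules, hence zero, which forces $X_0 = 0$.
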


For topological spaces $X$ and $Y$, we use
$\Maps(X,Y)$ to denote the set of continuous maps from $X$ to $Y$, which we endow
with the compact-open topology.  This topology has a subbase of open sets
$$
	V(K,U) = \{ f \in \Maps(X,Y) \mid f(K) \subseteq U \}
$$
for $K \subseteq X$ compact and $U \subseteq Y$ open.
We have the following lemma (cf.\ \cite[Proposition 3]{flood}).

\begin{lemma} \label{mapstop}
	For $X$ a topological space and
	$N \in \mc{T}_{\La}$, the abelian group $\Maps(X,N)$ is a topological 
	left $\La$-module under the action
	defined by $(\lambda \cdot f)(x) = \lambda \cdot f(x)$ for $\lambda \in \La$, 
	$f \in \Maps(X,N)$, and $x \in X$. 
\end{lemma}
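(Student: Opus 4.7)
The plan is to verify three things in turn: (a) the pointwise formulas make $\Maps(X,N)$ into a left $\La$-module, (b) $\Maps(X,N)$ is Hausdorff in the compact-open topology, and (c) addition and the scalar action are continuous. For (a), the module axioms are inherited pointwise from $N$, and closure under the operations is immediate since $\lambda \cdot f = m_\lambda \circ f$ and $f+g = +\circ (f,g)$ are compositions of continuous maps (here $m_\lambda \colon N \to N$ denotes left multiplication by $\lambda$, continuous by hypothesis $N \in \mc{T}_\La$). Part (b) is an easy consequence of Hausdorffness of $N$: if $f \ne g$, they differ at some $x \in X$, and disjoint opens $U, V \subseteq N$ separating $f(x)$ and $g(x)$ give disjoint subbasic neighborhoods $V(\{x\}, U)$ and $V(\{x\}, V)$ in $\Maps(X,N)$.

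The real content lies in (c), which I would treat in parallel for the two operations. Given a subbasic open $V(K,U)$ containing $f_0 + g_0$ (resp.\ $\lambda_0 \cdot f_0$), I use continuity of $+ \colon N \times N \to N$ (resp.\ of $\La \times N \to N$) at each pair $(f_0(x), g_0(x))$ (resp.\ $(\lambda_0, f_0(x))$) for $x \in K$ to produce opens $A_x \ni f_0(x)$ and $B_x \ni g_0(x)$ with $A_x + B_x \subseteq U$ (resp.\ an open ideal $I_x \subseteq \La$ and open $A_x \ni f_0(x)$ with $(\lambda_0 + I_x) \cdot A_x \subseteq U$, using the basis of open ideals at $0$ in $\La$). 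Compactness of $K$ yields a finite subcover by the open sets $W_{x_i} = f_0^{-1}(A_{x_i}) \cap g_0^{-1}(B_{x_i})$ (resp.\ $f_0^{-1}(A_{x_i})$), and after refining to compact pieces $K_i \subseteq W_{x_i}$ covering $K$, the open set $\bigcap_i V(K_i, A_{x_i}) \cap \bigcap_i V(K_i, B_{x_i})$ (resp.\ the analogous open with common open ideal $\bigcap_i I_{x_i}$) is a neighborhood of the input mapping into $V(K, U)$.

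The only nontrivial step, and thus the main obstacle, is the refinement of the finite open cover $\{W_{x_i}\}$ of $K$ to one by compact subsets $K_i \subseteq W_{x_i}$. Here I would exploit Hausdorffness of $N$: the image $f_0(K) \cup g_0(K)$ is a compact Hausdorff and hence normal subspace of $N$, so the open cover it inherits from the $A_{x_i}$ and $B_{x_i}$ admits a shrinking by sets with compact closure inside each $A_{x_i}$, $B_{x_i}$; pulling back through $f_0$ and $g_0$ gives closed, hence compact, subsets $K_i$ of $K$ with the required properties. This whole argument is a minor variant of the classical fact that, for any topological group $N$ and any topological space $X$, the mapping space $\Maps(X,N)$ is a topological group in the compact-open topology, adapted to track the continuous $\La$-action.
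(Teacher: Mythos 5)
The paper offers no proof here: it simply calls the lemma ``easily-proven'' and points to \cite[Proposition 3]{flood}, so there is no in-paper argument to compare against. Your outline (a)--(c) is the standard route, and (a), (b) are fine as written.

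In (c) you correctly isolate the crux, namely passing from a finite open cover $\{W_{x_i}\}$ of the compact set $K$ to a cover by compact pieces $K_i \subseteq W_{x_i}$. But the refinement as you describe it has a gap. You propose to shrink an open cover of the subspace $f_0(K)\cup g_0(K)\subseteq N$ and then ``pull back through $f_0$ and $g_0$.'' If that means shrinking $\{A_{x_i}\}$ on $f_0(K)$ to $\{A_i'\}$ and $\{B_{x_i}\}$ on $g_0(K)$ to $\{B_i'\}$ separately and taking $f_0^{-1}(\overline{A_i'})\cap g_0^{-1}(\overline{B_j'})$, the resulting compacta cover $K$ only after ranging over all pairs $(i,j)$, and a pair with $i\neq j$ sits in $f_0^{-1}(A_{x_i})\cap g_0^{-1}(B_{x_j})$, which need not lie in any single $W_{x_k}$; since you only control $A_{x_i}+B_{x_i}\subseteq U$ for matching indices, the conclusion $f(x)+g(x)\in U$ fails for cross terms. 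The fix is to apply the shrinking to the \emph{joint} map $h_0=(f_0,g_0)\colon K\to N\times N$: its image is compact Hausdorff in $N\times N$, hence normal, so the finite open cover $\{(A_{x_i}\times B_{x_i})\cap h_0(K)\}$ admits a shrinking $\{C_i\}$ with $\overline{C_i}\subseteq A_{x_i}\times B_{x_i}$, and $K_i := h_0^{-1}(\overline{C_i})$ is then closed in $K$, hence compact, covers $K$, and lies in $W_{x_i}$. The scalar-multiplication case you treat involves only the single map $f_0\colon K\to N$, so there the analogous step is unproblematic. With the joint-map correction your argument is complete.
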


\begin{lemma} \label{mapscomp}
	Let $X$ be a compact space.
	Let $(N_{\alpha} , \pi_{\alpha,\beta})$ be an inverse system of finite objects 
	and surjective maps in 
	$\C_{\La}$, and let $(N, \pi_{\alpha})$ be the inverse limit of the system.
	Then the isomorphism 
	$$
		\phi \colon \Maps(X,N) \to \invlim{\alpha} \Maps(X,N_{\alpha})
	$$
	of $\Lambda$-modules induced by the inverse limit is also a homeomorphism.
\end{lemma}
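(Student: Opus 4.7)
The plan is to verify that the canonical map
$$
\phi \colon \Maps(X, N) \to \invlim{\alpha} \Maps(X, N_\alpha), \qquad \phi(f) = (\pi_\alpha \circ f)_\alpha,
$$
is a bijective $\La$-module homomorphism, and then to establish continuity of both $\phi$ and $\phi^{-1}$. Bijectivity is an instance of the universal property of the inverse limit in $\mc{T}_{\La}$: injectivity holds because the projections $\pi_\alpha \colon N \to N_\alpha$ separate points of $N$, while for surjectivity, any compatible family $(f_\alpha)$ determines a map $f \colon X \to N$ by $f(x) = (f_\alpha(x))_\alpha$, which is continuous because $N$ carries the initial topology with respect to the $\pi_\alpha$ and each $\pi_\alpha \circ f = f_\alpha$ is continuous. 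The $\La$-linearity is immediate from the pointwise definition of the $\La$-action.

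Continuity of $\phi$ is straightforward: the target carries the initial topology with respect to the projections $p_\alpha$ to $\Maps(X, N_\alpha)$, so it suffices to check that $p_\alpha \circ \phi$ is continuous for each $\alpha$. The preimage under this map of a subbasic open set $V(K, W) \subset \Maps(X, N_\alpha)$ is $V(K, \pi_\alpha^{-1}(W))$, which is open in $\Maps(X, N)$.

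The crux of the argument is continuity of $\phi^{-1}$, which I would establish by showing that $\phi$ is open on the subbase of sets $V(K, U)$. Fix such a subbasic open set and a point $f$ therein, so that $f(K)$ is a compact subset of the open set $U$ in $N$. Since each $N_\alpha$ is finite and discrete, the sets $\pi_\alpha^{-1}(\{n\})$ form a basis of clopen subsets of $N$, and $N$ is compact, Hausdorff, and totally disconnected. By compactness of $f(K)$, finitely many basic clopen neighborhoods contained in $U$ suffice to cover $f(K)$; their union $V$ is clopen, and by directedness of the inverse system together with finiteness of this cover, $V = \pi_\alpha^{-1}(W)$ for some single $\alpha$ and some $W \subset N_\alpha$. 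Thus $f(K) \subset V \subset U$, and $p_\alpha^{-1}(V(K, W))$ is an open neighborhood of $\phi(f)$ in the target whose preimage under $\phi$ lies inside $V(K, U)$, proving the openness claim.

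The main obstacle is this last step: the identification of a clopen neighborhood of the compact set $f(K)$ inside the open $U$ as the preimage of a subset of some single level $N_\alpha$. This is where the hypotheses that the $N_\alpha$ are finite and that the index set is directed are essential; without them one could only approximate $f(K)$ on finitely many levels simultaneously.
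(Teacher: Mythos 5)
Your proof is correct, and its overall architecture matches the paper's: verify that $\phi$ is a continuous $\La$-linear bijection using the universal property of the inverse limit, then establish openness by producing, for each $f \in V(K,U)$, an open neighborhood of $\phi(f)$ inside $\phi(V(K,U))$. The two arguments differ in how that neighborhood is manufactured, and the difference is worth noting. You cover the compact set $f(K) \subseteq U$ by finitely many basic clopens $\pi_{\alpha_i}^{-1}(\{n_i\})$ contained in $U$ and then invoke directedness of the index set to write their union as $\pi_\alpha^{-1}(W)$ for a single $\alpha$, so that a single subbasic set $p_\alpha^{-1}(V(K,W))$ does the job. The paper covers the compact domain $K$ instead: for each $m \in K$ it chooses a basic open box around $f(m)$ inside $U$ (with almost all coordinates full), pulls it back through the $f_\alpha$ to a clopen piece $K_m$ of $K$, extracts a finite subcover $K_{m_1},\dots,K_{m_d}$, and takes the finite intersection of the corresponding open sets $\prod_\alpha V(K_{m_i},V_{u_i,\alpha})$ meeting the limit. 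The two arguments are dual in spirit --- you cover the range, the paper covers the domain --- and both ultimately rest on compactness plus the profinite structure of $N$. Your route is a bit more streamlined and yields the slightly sharper observation that $\phi(V(K,U))$ is a union of single subbasic opens with the same compact $K$, at the price of explicitly using directedness to merge levels; the paper never consolidates levels, accepting a finite intersection of subbasic sets instead.
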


\begin{lemma} \label{mapsdisc}
	Let $(X_{\alpha},\iota_{\alpha,\beta})$ be a direct system of finite objects 
	and injective maps
	in the category of discrete spaces, and let 
	$(X,\iota_{\alpha})$ be the direct limit of the system.  
	Let $N \in \T_{\La}$. Then the isomorphism
	$$
		\theta \colon \Maps(X,N) \to \invlim{\alpha} \Maps(X_{\alpha},N)
	$$
	of $\Lambda$-modules is a homeomorphism.
\end{lemma}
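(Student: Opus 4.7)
The plan is to verify continuity of $\theta$ and of its set-theoretic inverse separately. First I would observe that since each $X_\alpha$ is finite and discrete and the transition maps are injective, the direct limit $X = \bigcup_\alpha \iota_\alpha(X_\alpha)$ carries the discrete topology as well. Consequently the compact subsets of $X$ are exactly the finite ones, so a subbase for the compact-open topology on $\Maps(X,N)$ is provided by the sets $V(F,U) = \{f \in \Maps(X,N) \mid f(F) \subseteq U\}$ with $F \subseteq X$ finite and $U \subseteq N$ open.

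Continuity of $\theta$ is then immediate from the universal property of the inverse limit: it suffices to check that for each $\alpha$, the composite of $\theta$ with the projection $p_\alpha \colon \invlim{\beta} \Maps(X_\beta,N) \to \Maps(X_\alpha,N)$ is continuous. But this composite is just the precomposition map $f \mapsto f \circ \iota_\alpha$, which is continuous for the compact-open topology because $\iota_\alpha$ is continuous (and $\La$-linearity of the whole thing has already been asserted in the statement).

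For continuity of $\theta^{-1}$, I would show directly that $\theta$ is open on a subbase. Given a subbasic open $V(F,U)$ with $F \subseteq X$ finite, choose $\alpha$ large enough that $F \subseteq \iota_\alpha(X_\alpha)$; this is possible because $F$ is finite and $X$ is the increasing union of the $\iota_\alpha(X_\alpha)$. Let $F_\alpha = \iota_\alpha^{-1}(F)$, which is well-defined and finite because $\iota_\alpha$ is injective. Unwinding the definitions, one sees that for any $f \in \Maps(X,N)$, the condition $f \in V(F,U)$ is equivalent to $(p_\alpha \circ \theta)(f) \in V(F_\alpha,U)$; since $\theta$ is a bijection onto the inverse limit, this yields the equality $\theta(V(F,U)) = p_\alpha^{-1}(V(F_\alpha,U))$, which is open by definition of the inverse-limit topology.

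I do not anticipate a genuine obstacle: the whole argument reduces to the observation that every compact subset of the discrete space $X$ is finite and hence already contained in some $\iota_\alpha(X_\alpha)$. The only point requiring modest care is the injectivity of the transition maps, which is what ensures that $F_\alpha$ is well-defined and that the identification $\theta(V(F,U)) = p_\alpha^{-1}(V(F_\alpha,U))$ is a genuine equality rather than just an inclusion; without injectivity one would at best get an open set in the image, which would be inadequate for the cofinal comparison of subbases.
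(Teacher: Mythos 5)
Your proof is correct and follows essentially the same route as the paper's: both reduce to the observation that a compact subset $K$ of the discrete limit $X$ is finite, hence contained in some $\iota_\alpha(X_\alpha)$, and that injectivity of the transition maps gives $\theta(V(K,U)) = \mc{N} \cap p_\alpha^{-1}(V(\iota_\alpha^{-1}(K),U))$, exhibiting $\theta$ as open. The only cosmetic difference is that the paper first rewrites $\theta(V(K,U))$ as $\mc{N} \cap \prod_\alpha V(K_\alpha,U)$ and then identifies this with the preimage under a single projection $p_\beta$, whereas you go directly to the identification with a single projection preimage.
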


Fix a set $\mc{I}$ of open ideals of $\La$ that forms a basis of neighborhoods of $0$ in $\La$.
We recall the following lemma for convenience: see \cite[Proposition 3.1.7]{lim} and \cite[Lemma 3.1.4]{lim}.

\begin{lemma}\ \label{modfacts}
	\begin{enumerate}
		\item[a.] Let $M$ be a finitely generated, compact $\La$-module.  Then we have
		an isomorphism
		$$
			M \cong \invlim{I \in \mc{I}} M/IM
		$$
		of topological $\La$-modules.
		\item[b.] Let $M$ be a finitely generated, compact $\La$-module and 
		$N$ be a compact or discrete $\Lambda$-module.  Or, let $M$
		and $N$ be compact $\La$-modules endowed with the $\mc{I}$-adic
		topology.
		Then
		$$
			\Hom_{\La,\cts}(M,N) = \Hom_{\La}(M,N).
		$$
	\end{enumerate}
\end{lemma}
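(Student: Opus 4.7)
The plan for part (a) is to show that $\{IM : I \in \mc{I}\}$ is a basis of open neighborhoods of $0$ in $M$, and then to deduce that the canonical map $M \to \invlim{I \in \mc{I}} M/IM$ is a topological isomorphism.  I would choose generators $m_1, \ldots, m_n$ of $M$ and consider the resulting continuous surjection $\pi \colon \La^n \twoheadrightarrow M$, which is a topological quotient map since $\La^n$ is compact and $M$ is Hausdorff.  A basis of neighborhoods of $0$ in $\La^n$ is provided by the sets $I\La^n$ for $I \in \mc{I}$ (obtained from the product topology by passing to a common refinement of the $n$ coordinate-wise open ideals, then shrinking to an element of $\mc{I}$).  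Thus each $IM = \pi(I\La^n)$ has preimage $I\La^n + \ker\pi$, which is open, so $IM$ is open in $M$; conversely, the preimage of any open neighborhood $U$ of $0$ in $M$ contains $I\La^n$ for some $I \in \mc{I}$, so $IM \subseteq U$.

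With the basis $\{IM\}$ in hand, the canonical map $\phi \colon M \to \invlim{I} M/IM$ is continuous by the universal property, and injective since $\bigcap_I IM = 0$ by Hausdorffness of $M$.  Its image is compact hence closed in the inverse limit, and dense since $\phi$ is surjective modulo each $IM$, so $\phi$ is bijective.  A continuous bijection between compact Hausdorff spaces is automatically a homeomorphism, which finishes part (a).

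For part (b), I would handle the case of $M$ finitely generated compact and $N$ discrete first.  Given a $\La$-module map $\varphi \colon M \to N$, continuity of the $\La$-action on the discrete module $N$ forces each $\mr{Ann}_{\La}(\varphi(m_i))$ to be an open ideal of $\La$; their finite intersection is open and so contains some $I \in \mc{I}$, and this $I$ annihilates every $\varphi(m_i)$, hence all of $\varphi(M)$.  Consequently $IM \subseteq \ker\varphi$, and part (a) shows $\ker\varphi$ is open, which together with discreteness of $N$ gives continuity of $\varphi$.  The case of $N$ compact reduces to the discrete case by writing $N = \invlim{\alpha} N_\alpha$ with the $N_\alpha$ finite discrete via Lemma \ref{limfin}, applying continuity of each composite $M \to N_\alpha$, and invoking the universal property of the inverse limit.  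Finally, when $M$ and $N$ are both compact with the $\mc{I}$-adic topology, any $\La$-module map $\varphi$ satisfies $\varphi(IM) \subseteq IN$, so $\varphi^{-1}(IN) \supseteq IM$ is open for every $I \in \mc{I}$; since $\{IN\}_{I \in \mc{I}}$ is a basis of neighborhoods of $0$ in $N$ by hypothesis, $\varphi$ is continuous.

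The main obstacle is really the topological bookkeeping in part (a), specifically confirming that the quotient map $\La^n \to M$ transports the natural basis of neighborhoods of $0$ in $\La^n$ onto a neighborhood basis in $M$.  I expect this to be routine given that continuous surjections from compact to Hausdorff spaces are quotient maps, with the only mildly subtle input being that the sets $I\La^n$ for $I \in \mc{I}$ do form a neighborhood basis in $\La^n$ (rather than only their finite coordinate-wise intersections).
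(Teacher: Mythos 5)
Your proof is correct. The paper itself disposes of this lemma with a one-line citation to Lim (Proposition 3.1.7 for part (a), Lemma 3.1.4 for part (b)), whereas you give a direct, self-contained argument. The key structural move in your part (a) — using a continuous surjection $\pi \colon \La^n \twoheadrightarrow M$ (which is automatically a topological quotient map since $\La^n$ is compact and $M$ Hausdorff) to push the $\mc{I}$-adic neighborhood basis $\{I\La^n\}$ down to $\{IM\}$ — is the standard mechanism underlying Lim's result, so you are essentially reconstructing the cited proof rather than finding a new one. Your part (b) is likewise the expected argument: for $N$ discrete you use open annihilators of the finitely many generators together with the two-sidedness of the ideals $I \in \mc{I}$ to deduce that $IM \subseteq \ker\varphi$ for some $I$, then invoke part (a); the compact and $\mc{I}$-adic cases reduce to this via Lemma \ref{limfin} and the universal property of the inverse limit. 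The one thing worth stating more explicitly is that $\mr{Ann}_\La(\varphi(m_i))$ is a left ideal, so containing an $I\in\mc{I}$ (which you use as a two-sided ideal to get $I\cdot\La\varphi(m_i)\subseteq I\varphi(m_i)=0$) is what makes the annihilation of all of $\varphi(M)$ go through, but you do implicitly rely on this and it is consistent with the paper's convention that $\mc{I}$ consists of open (two-sided) ideals. What your approach buys is transparency and independence from Lim's paper; what the paper's approach buys is brevity.
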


The following lemma is a consequence of Lemma \ref{mapscomp}.

\begin{lemma} \label{homcomp}
	Let $M, N \in \C_{\La}$, 
	and suppose that $N$ is an inverse limit 
	in $\C_{\La}$ of a system $(N_{\alpha},\pi_{\alpha,\beta})$ of
	finite $\Lambda$-modules and surjective homomorphisms.
	Then the isomorphism 
	$$
		\Hom_{\La,\cts}(M,N) \xrightarrow{\sim} \invlim{\alpha} 
		\Hom_{\La,\cts}(M,N_{\alpha})
	$$
	of groups induced by the inverse limit is also a homeomorphism.
\end{lemma}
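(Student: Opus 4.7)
The plan is to obtain this homeomorphism as the restriction of the homeomorphism of Lemma \ref{mapscomp} to appropriate subspaces. That lemma already identifies $\Maps(M,N)$ with $\invlim{\alpha} \Maps(M,N_\alpha)$ as topological $\Lambda$-modules, so I intend to check first that under this identification $\Hom_{\La,\cts}(M,N)$ corresponds bijectively to $\invlim{\alpha} \Hom_{\La,\cts}(M,N_\alpha)$, and second that the given topologies on both sides coincide with the respective subspace topologies inherited from the $\Maps$-spaces.

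For the bijection, postcomposition with each $\pi_\alpha \colon N \to N_\alpha$ sends $\Hom_{\La,\cts}(M,N)$ into $\Hom_{\La,\cts}(M,N_\alpha)$ compatibly with the transition maps, while a compatible system of continuous $\Lambda$-linear maps $f_\alpha \colon M \to N_\alpha$ assembles by the universal property of $N = \invlim{\alpha} N_\alpha$ into a continuous $\Lambda$-linear map $M \to N$. These two constructions are mutually inverse, so the subsets correspond under the homeomorphism from Lemma \ref{mapscomp}.

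For the topological half, $\Hom_{\La,\cts}(M,N)$ and each $\Hom_{\La,\cts}(M,N_\alpha)$ carry the compact-open topology, which by convention is the subspace topology from the corresponding $\Maps$-space. It then remains to observe that the inverse limit topology on $\invlim{\alpha} \Hom_{\La,\cts}(M,N_\alpha)$ coincides with its subspace topology inside $\invlim{\alpha} \Maps(M,N_\alpha)$: both can be characterized as the initial topology with respect to the family of composite maps to each $\Maps(M,N_\beta)$. This last step is the main point requiring verification, but it is a routine general fact about inverse limits of subspaces rather than anything specific to profinite rings or continuous cohomology, so no further input beyond Lemma \ref{mapscomp} is needed.
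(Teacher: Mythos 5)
Your proposal is correct and is essentially the paper's own argument: the paper also deduces the lemma directly as a corollary of Lemma \ref{mapscomp}, noting that the compact-open topology on $\Hom_{\La,\cts}(M,N)$ is the subspace topology from $\Maps(M,N)$ and that the inverse-limit topology on $\invlim{\alpha}\Hom_{\La,\cts}(M,N_\alpha)$ agrees with its subspace topology inside $\invlim{\alpha}\Maps(M,N_\alpha)$. You have merely spelled out the set-theoretic bijection and the initial-topology comparison in more detail than the paper does.
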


We also have the analogue for discrete $\La$-modules,
which is a corollary of Lemma \ref{mapsdisc}.

\begin{lemma} \label{homdisc}
	Let $M, N \in \D_{\La}$, 
	and suppose that $M$ is a direct limit 
	in $\D_{\La}$ of a system $(M_{\alpha},\iota_{\alpha,\beta})$ of
	finite $\Lambda$-modules and injective homomorphisms.
	Then the isomorphism 
	$$
		\Hom_{\La}(M,N) \xrightarrow{\sim} \invlim{\alpha} 
		\Hom_{\La}(M_{\alpha},N)
	$$
	of groups induced by the inverse limit is also a homeomorphism.
\end{lemma}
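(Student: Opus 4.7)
The plan is to deduce this from Lemma \ref{mapsdisc} by identifying the Hom groups as $\La$-invariant subspaces of the corresponding Maps spaces and showing that the homeomorphism of Lemma \ref{mapsdisc} restricts to the claimed one.

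First, I would observe that since $M$ and each $M_\alpha$ are discrete, every $\La$-module homomorphism from them to $N$ is automatically continuous, so we have $\Hom_\La(M,N) = \Hom_{\La,\cts}(M,N)$ as subsets of $\Maps(M,N)$, and similarly for each $M_\alpha$. By definition, the compact-open topology on $\Hom_{\La,\cts}$ is the subspace topology from $\Maps$. Second, the natural map $\rho \colon \Hom_\La(M,N) \to \invlim{\alpha} \Hom_\La(M_\alpha,N)$ is a bijection of abstract $\La$-modules by the universal property of the direct limit $M = \dirlim{\alpha} M_\alpha$ in $\Mod_\La$, which is also a direct limit in $\D_\La$.

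Next, I would consider the commutative square
$$
\xymatrix{
\Hom_\La(M,N) \ar[r]^-{\rho} \ar@{^{(}->}[d] & \invlim{\alpha} \Hom_\La(M_\alpha, N) \ar@{^{(}->}[d] \\
\Maps(M,N) \ar[r]^-{\theta} & \invlim{\alpha} \Maps(M_\alpha, N),
}
$$
whose bottom horizontal map $\theta$ is a homeomorphism by Lemma \ref{mapsdisc}. The left vertical inclusion is one of subspaces by the first paragraph. For the right vertical inclusion, one uses the general topological fact that if $(A_\alpha) \subseteq (B_\alpha)$ is a compatible inclusion of inverse systems of topological spaces, then the subspace topology on $\invlim{\alpha} A_\alpha$ inside $\invlim{\alpha} B_\alpha$ agrees with the inverse limit of the subspace topologies on each $A_\alpha \subseteq B_\alpha$; this is immediate from the definition of the initial topology.

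Since $\theta$ is a homeomorphism and $\rho$ is its restriction to the indicated $\La$-invariant subspaces (on which it is a bijection), $\rho$ is a homeomorphism as well. The one point requiring any care is the compatibility of subspace and inverse-limit topologies on the right-hand side, but this is a formality, so I do not expect a substantive obstacle; the proof is essentially a direct transcription of Lemma \ref{mapscomp}'s deduction of Lemma \ref{homcomp}, with the roles of direct and inverse limits exchanged.
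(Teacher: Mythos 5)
Your proposal is correct and matches the paper's approach: the paper simply notes that Lemma \ref{homdisc} is a corollary of Lemma \ref{mapsdisc} in exactly the way Lemma \ref{homcomp} is deduced from Lemma \ref{mapscomp}, namely by viewing the Hom groups as subspaces of the corresponding Maps spaces and using that inverse limits commute with passage to subspaces in the initial topology. Your fleshed-out version of this deduction, including the observation that $\Hom_\La = \Hom_{\La,\cts}$ for discrete source modules, is exactly what the paper leaves implicit.
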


We let $\Lo$ denote the opposite ring to $\La$.  For a topological $\La$-module $X$,
we let
$$
	X^{\vee} = \Hom_{\cts}(X,\R/\Z)
$$
denote the Pontryagin dual $\Lo$-module.  
We endow $X^{\vee}$ with the compact-open topology, and then $X^{\vee}$ becomes
an object in $\mc{T}_{\Lo}$.
The Pontryagin dual preserves the property of local compactness.  For any $X \in \mc{L}_{\La}$, the natural map $X \to (X^{\vee})^{\vee}$ is an isomorphism in $\mc{L}_{\La}$.  If $X$ is a
compact (resp., discrete) $\La$-module, then $X^{\vee}$ is a discrete (resp., compact)
$\Lo$-module.  In fact, the Pontryagin dual provides contravariant, exact equivalences between $\C_{\La}$ and $\mc{D}_{\Lo}$.

One may prove the following by employing Lemmas \ref{modfacts}, \ref{homcomp}, and \ref{homdisc}.

\begin{proposition} \label{dualhom}
	Suppose that $\La$ is an $R$-algebra over a profinite commutative ring $R$ via a continuous map from $R$ to the 
	center of $\La$.
	Suppose that $T, U \in \C_{\La}$ are endowed with the $\mc{I}$-adic
	topology (e.g., are finitely generated $\La$-modules).
	Then the map 
	$$
		\Hom_{\La}(T,U) \to \Hom_{\Lo}(U^{\vee},T^{\vee})
	$$
	that sends $\rho$ to the map $\rho^*$ with
	$\rho^*(\phi) = \phi \circ \rho$ 
	for all $\phi \in U^{\vee}$ is an isomorphism in $\C_R$.
\end{proposition}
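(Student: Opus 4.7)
The plan is to reduce the statement to the case where $T$ and $U$ are finite $\La$-modules, where it becomes classical Pontryagin duality for finite modules, and then to reassemble the general case using the inverse and direct limit descriptions provided by Lemmas \ref{homcomp} and \ref{homdisc}. First I will check that $\rho \mapsto \rho^*$ is well-defined: continuity of $\rho^*$ is immediate from the continuity of $\rho$ and of $\phi \in U^{\vee}$, while its $\Lo$-linearity is a direct calculation from the $\La$-linearity of $\rho$. The hypothesis that $T$ and $U$ carry the $\mc{I}$-adic topology lets me invoke Lemma \ref{modfacts}b to identify $\Hom_{\La}(T,U)$ with $\Hom_{\La,\cts}(T,U)$, so I may work with continuous homomorphisms throughout.

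Next, since $T$ and $U$ are compact with $\mc{I}$-adic topology, the quotients $T_I = T/IT$ and $U_J = U/JU$ for $I,J \in \mc{I}$ are both compact and discrete, hence finite. Writing $T = \invlim{I} T_I$ and $U = \invlim{J} U_J$, Pontryagin duality expresses $T^{\vee} = \dirlim{I} T_I^{\vee}$ and $U^{\vee} = \dirlim{J} U_J^{\vee}$ as filtered colimits of finite discrete $\Lo$-modules. Applying Lemma \ref{homcomp} to $U$ and observing that any continuous $\La$-linear map from $T$ to a finite module factors through some $T_I$, I obtain
$$
	\Hom_{\La,\cts}(T,U) \cong \invlim{J} \,\dirlim{I} \,\Hom_{\La}(T_I,U_J).
$$
In parallel, Lemma \ref{homdisc} applied to $U^{\vee}$ together with the fact that $\Hom_{\Lo}(U_J^{\vee},-)$ commutes with filtered direct limits of discrete modules (since $U_J^{\vee}$ is finite, hence finitely presented) yields
$$
	\Hom_{\Lo}(U^{\vee},T^{\vee}) \cong \invlim{J} \,\dirlim{I} \,\Hom_{\Lo}(U_J^{\vee},T_I^{\vee}).
$$

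For finite $\La$-modules $T_I$ and $U_J$, classical Pontryagin duality provides natural isomorphisms $\Hom_{\La}(T_I,U_J) \xrightarrow{\sim} \Hom_{\Lo}(U_J^{\vee},T_I^{\vee})$ of finite $R$-modules given precisely by $\rho \mapsto \rho^*$, and these are functorial in both $I$ and $J$. Passing first through the direct limit over $I$ and then through the inverse limit over $J$ produces the asserted isomorphism as topological $R$-modules. To see both sides lie in $\C_R$, I will observe that for each finite $U_J$ the annihilator $\mr{Ann}_{\La}(U_J)$ has finite index in $\La$, so it contains some $I_0 \in \mc{I}$; hence every continuous $\La$-linear map $T \to U_J$ factors through the finite quotient $T_{I_0}$, making $\Hom_{\La,\cts}(T,U_J)$ itself finite. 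Each side is therefore an inverse limit over $J \in \mc{I}$ of finite $R$-modules, and so compact. The main obstacle is purely topological bookkeeping: verifying that the compact-open topology on each $\Hom_{\La,\cts}(T,U_J)$ is genuinely discrete (and finite) and that the limit topologies match across Lemmas \ref{homcomp} and \ref{homdisc}, so that the algebraic bijection coming from the finite case is indeed a homeomorphism.
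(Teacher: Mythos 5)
Your argument is correct and takes essentially the same approach as the paper: both reduce to the finite case via Lemmas \ref{homcomp} and \ref{homdisc}, where the map becomes classical Pontryagin duality, and then reassemble the general isomorphism as a limit in $\C_R$. The only cosmetic difference is that you pass to finite quotients of both $T$ and $U$ simultaneously (producing a double limit), whereas the paper reduces only $U$ to its finite quotients $U/IU$, keeps $T$ and $T^{\vee}$ intact, and constructs the algebraic inverse $\psi \mapsto \psi^*$ directly at the outset before invoking the limit lemmas just to match topologies.
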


\subsection{Cochains and cohomology groups} \label{cochcoh}

Let $G$ be a profinite group.
We set $\mc{T}_{\La,G} = \mc{T}_{\La\ps{G}}$, which is to say that $\mc{T}_{\La,G}$ is the category of topological $\La$-modules with a continuous $\La$-linear action of $G$ with morphisms that are continuous $\La[G]$-module homomorphisms.  We similarly define $\mc{L}_{\La,G}$, $\C_{\La,G}$, and $\D_{\La,G}$.

The category of chain complexes over an additive category $\mc{A}$ that admits kernels and cokernels will be denoted by $\Ch(\mc{A})$, with $\Ch^+(\mc{A})$, $\Ch^-(\mc{A})$, and $\Ch^b(\mc{A})$ denoting the full subcategories of bounded below, bounded above, and bounded complexes in $\mc{A}$.

The complex of inhomogeneous continuous cochains provides a functor 
$$
	C(G, \,\cdot \,) \colon \mc{T}_{\La,G} \to \Ch^+(\Mod_{\La}).
$$
For $M \in \T_{\La,G}$, we endow
each $C^i(G,M)$ with the compact-open topology.
The coboundary maps in the complex $C(G,M)$ 
are then continuous as $G$ is 
a topological group, $M$ is a continuous $G$-module, and the action of $\La$ on 
$M$ commutes with the $G$-action.  For $M \in \mc{T}_{\La,G}$, we denote the $\La$-modules 
that are the $i$th cocycle, coboundary, and cohomology groups for $C(G, M)$ by $Z^i(G,M)$, 
$B^i(G,M)$, and $H^i(G,M)$, respectively.
Both $Z^i(G,M)$ and $B^i(G,M)$ may be viewed as objects in $\T_{\La}$ under the 
subspace topologies from $C^i(G,M)$.  

\begin{lemma} \label{cochdisc}
	For $A \in \mc{D}_{\La,G}$, the topological $\La$-modules $C^i(G,A)$ are discrete.
\end{lemma}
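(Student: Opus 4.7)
The plan is to show directly that each singleton in $C^i(G,A)$ is open in the compact-open topology. Fix $f \in C^i(G,A)$, i.e., a continuous map $f \colon G^i \to A$. Since $A$ is discrete, every singleton $\{a\} \subseteq A$ is open, so the fibers $f^{-1}(a)$ form a disjoint open cover of $G^i$. Compactness of $G^i$ forces $f(G^i)$ to be a finite subset $\{a_1,\ldots,a_n\}$ of $A$, and each $K_j = f^{-1}(a_j)$ is then simultaneously open and closed, hence compact, in $G^i$.

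Now consider the subbasic open sets $V(K_j,\{a_j\}) \subseteq C^i(G,A)$ for $j=1,\ldots,n$, which are open because each $K_j$ is compact and $\{a_j\}$ is open in the discrete module $A$. Their intersection
$$
U \;=\; \bigcap_{j=1}^{n} V(K_j,\{a_j\})
$$
is a finite intersection of open sets, hence open. Any $g \in U$ satisfies $g(K_j) \subseteq \{a_j\}$ for every $j$, and since the $K_j$ partition $G^i$ this forces $g = f$. Therefore $\{f\} = U$ is open in $C^i(G,A)$, proving that the topology on $C^i(G,A)$ is discrete.

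There is no real obstacle here; the only mildly delicate point is making sure that compactness of $G^i$ (and the discreteness of $A$) really does force $f(G^i)$ to be finite and each fiber $f^{-1}(a)$ to be compact open — both of which are immediate once one observes that the fibers form a disjoint open cover. Note that one does not need to invoke the $\Lambda$-module structure at all: the statement is about the underlying topological abelian group, and $\Lambda$-linearity of the action plays no role in the argument.
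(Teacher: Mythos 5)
Your proof is correct and follows essentially the same route as the paper's: observe that the fibers of $f$ partition $G^i$ into finitely many compact open sets, then write $\{f\}$ as the finite intersection $\bigcap_j V(K_j,\{a_j\})$ of subbasic open sets. The paper's version is more terse but identical in substance.
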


If $A \in \D_{\La,G}$, then $A$ is a direct limit of its finite $\La[G]$-submodules $A_{\alpha}$, and we have
$$
	H^i(G,A) \cong \dirlim{\alpha} H^i(G,A_{\alpha}).
$$ 
We endow $H^i(G,A)$ with the discrete topology, under which it is an object of $\D_{\La}$.
This of course agrees with the subquotient topology from $C^i(G,A)$, where we view $H^i(G,A)$
as $Z^i(G,A)/B^i(G,A)$.   
The result in the compact case
 is found in the following statement, the proof of which employs Lemma \ref{mapscomp}.

\begin{proposition} \label{topctscoh}
	Assume that $H^i(G,M)$ is finite for every finite $\Z[P^{-1}][G]$-module 
	$M$ and every $i \ge 0$,
	where $P$ is a set of primes of $\Z$ that act invertibly on $\Lambda$.
	Suppose that $T \in \C_{\La,G}$, and write $T = \invlim{} T_{\alpha}$ for
	some finite $\La[G]$-module quotients $T_{\alpha}$.  Then the natural map
	$$
		H^i(G,T) \to \invlim{\alpha} H^i(G,T_{\alpha})
	$$
	is an isomorphism of $\La$-modules. 
	Moreover, $B^i(G,T)$ and $Z^i(G,T)$ are closed subspaces of $C^i(G,T)$, and 
	the subquotient topology on $H^i(G,T)$ induced by the isomorphism 
	$$
		H^i(G,T) \cong \frac{Z^i(G,T)}{B^i(G,T)}
	$$ 
	agrees with the profinite topology induced by the above isomorphism.
\end{proposition}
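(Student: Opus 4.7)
The plan is to run the Mittag--Leffler machinery on the inverse system of cochain complexes $(C^\bullet(G, T_\alpha))$ and then read off the topological statements from explicit bases of open neighborhoods of $0$. The main obstacle is the Mittag--Leffler step, where the hypothesis on finiteness of $H^i(G, M)$ for finite coefficient modules $M$ is essential.

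Since each $T_\alpha$ is a quotient of $T$, the transition maps $T_\alpha \twoheadrightarrow T_\beta$ are automatically surjective. By the standard splitting result \cite[Proposition 2.2.2]{rz}, both $T \twoheadrightarrow T_\alpha$ and $T_\alpha \twoheadrightarrow T_\beta$ admit continuous set-theoretic sections, so the induced maps of cochains $C^i(G, T) \to C^i(G, T_\alpha)$ and $C^i(G, T_\alpha) \to C^i(G, T_\beta)$ are surjective. By Lemma \ref{mapscomp}, $C^i(G, T) \cong \invlim{\alpha} C^i(G, T_\alpha)$ as topological $\La$-modules, in a way that intertwines the differentials. Abbreviate $C^i_\alpha = C^i(G, T_\alpha)$ and similarly for $Z^i_\alpha$, $B^i_\alpha$, $H^i_\alpha$.

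To obtain the cohomological identification, I would apply the six-term exact sequence for $R^1\invlim{}$. Given $b \in B^{i+1}_\beta$, write $b = dc$, lift $c$ to $\tilde c \in C^i_\alpha$ by surjectivity, and note that $d\tilde c$ maps to $b$, so $\{B^{i+1}_\alpha\}$ has surjective transition. As each $T_\alpha$ is a finite $\Z[P^{-1}][G]$-module (primes in $P$ being invertible on $\La$), the hypothesis gives $H^i_\alpha$ finite, so $\{H^i_\alpha\}$ is trivially Mittag--Leffler. Applying the six-term sequence to
$$0 \to B^i_\alpha \to Z^i_\alpha \to H^i_\alpha \to 0 \quad \text{and} \quad 0 \to Z^i_\alpha \to C^i_\alpha \to B^{i+1}_\alpha \to 0$$
in turn yields $R^1\invlim{\alpha} B^i_\alpha = 0$ and then $R^1\invlim{\alpha} Z^i_\alpha = 0$. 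Passing to inverse limits then produces $Z^i(G, T) = \invlim{\alpha} Z^i_\alpha$, $B^i(G, T) = \invlim{\alpha} B^i_\alpha$, and the desired isomorphism $H^i(G, T) \cong \invlim{\alpha} H^i_\alpha$ of $\La$-modules.

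Finally, for the topological claims: each $C^i_\alpha$ is discrete by Lemma \ref{cochdisc}, so all its subgroups are clopen. Hence $Z^i(G, T)$ and $B^i(G, T)$, being inverse limits of clopen subgroups, are closed subspaces of $C^i(G, T)$. To identify the subquotient topology on $H^i(G, T)$ with the inverse limit topology on $\invlim{\alpha} H^i_\alpha$, both being group topologies, it suffices to compare bases of open neighborhoods of $0$. A basic neighborhood of $0$ in $Z^i(G, T)$ is $W_\alpha = \ker(Z^i(G, T) \to Z^i_\alpha)$. Using surjectivity of $C^{i-1}(G, T) \to C^{i-1}_\alpha$, one checks that $W_\alpha + B^i(G, T)$ equals the preimage in $Z^i(G, T)$ of $B^i_\alpha$, so the image of $W_\alpha$ in $H^i(G, T)$ is exactly $\ker(H^i(G, T) \to H^i_\alpha)$, the corresponding basic neighborhood in the profinite topology. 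Thus the two topologies coincide.
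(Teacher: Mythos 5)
Your proof is correct and takes essentially the same route as the paper's, which also reduces to $C^i(G,T) \cong \invlim{\alpha} C^i(G,T_\alpha)$ via Lemma \ref{mapscomp} and then passes to cocycles, coboundaries, and cohomology; where the paper writes ``diagram chasing'' with a citation to Lim's Proposition 3.2.8, you supply the underlying Mittag--Leffler and $R^1\varprojlim$ argument explicitly, and where the paper appeals to ``a quick check of definitions'' for the topological identification, you give the concrete comparison of neighborhood bases of $0$.

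One small slip in exposition: the clause ``Applying the six-term sequence to $0 \to B^i_\alpha \to Z^i_\alpha \to H^i_\alpha \to 0$ and $0 \to Z^i_\alpha \to C^i_\alpha \to B^{i+1}_\alpha \to 0$ in turn yields $R^1\varprojlim B^i_\alpha = 0$ and then $R^1\varprojlim Z^i_\alpha = 0$'' does not reflect the actual order of deduction. The vanishing $R^1\varprojlim B^i_\alpha = 0$ is immediate from the surjectivity of the transition maps that you established earlier (Mittag--Leffler), not from either short exact sequence; the first short exact sequence together with $R^1\varprojlim H^i_\alpha = 0$ then gives $R^1\varprojlim Z^i_\alpha = 0$, and the role of the second short exact sequence is to produce the identification $\varprojlim B^{i+1}_\alpha = B^{i+1}(G,T)$. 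All the needed facts appear in your write-up, but the two ``yields'' are attached to the wrong steps.
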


Cup products on continuous cohomology exist quite generally, as stated in the following lemma.  
For this, if $\Omega$ denotes a profinite
ring, then  $\T_{\Omega-\La,G}$ denotes the category of topological 
$\Omega$-$\La$-bimodules with a continuous commuting action of $G$.

\begin{lemma} \label{cupprod}
	Let $\La$, $\Omega$, and $\Sigma$ be profinite rings.  
	Let $M \in \T_{\Omega-\La,G}$, $N \in \T_{\La-\Sigma,G}$, and 
	$L \in \T_{\Omega-\Sigma,G}$, and suppose that 
	$\phi \colon M \times N \to L$
	is a continuous, $\La$-balanced, 
	$G$-equivariant homomorphism of $\Omega$-$\Sigma$-bimodules. 
	Then we have continuous, $\La$-balanced cup products 
	$$
		C^i(G,M) \times C^j(G,N) \xrightarrow{\cup} C^{i+j}(G,L)
	$$
	of $\Omega$-$\Sigma$-bimodules for each $i, j \ge 0$.
\end{lemma}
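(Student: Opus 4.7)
The plan is to define the cup product on cochains by the standard formula
\[
	(f \cup g)(\sigma_1, \ldots, \sigma_{i+j})
	= \phi\bigl(f(\sigma_1, \ldots, \sigma_i),\, (\sigma_1 \cdots \sigma_i)\cdot g(\sigma_{i+1}, \ldots, \sigma_{i+j})\bigr)
\]
for $f \in C^i(G,M)$, $g \in C^j(G,N)$, and $\sigma_1,\dots,\sigma_{i+j} \in G$, and then to verify the asserted algebraic and topological properties in turn.

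The algebraic statements will follow as formal consequences of the corresponding properties of $\phi$.  The pointwise $\Omega$-$\Sigma$-bimodule structures on the cochain groups (inherited from those on $M$, $N$, and $L$) will make $f \cup g$ into an $\Omega$-$\Sigma$-bimodule homomorphism, because $\phi$ is such and because the left $\Omega$-action on $M$ and right $\Sigma$-action on $N$ commute with the $G$-actions by the bimodule hypotheses $M \in \T_{\Omega-\La,G}$ and $N \in \T_{\La-\Sigma,G}$.  $\La$-balancedness $(f\lambda) \cup g = f \cup (\lambda g)$ will likewise reduce, tuple by tuple, to the $\La$-balancedness of $\phi$, using that the $\La$-action on $N$ commutes with the $G$-action so that it may be pulled past the prefix $\sigma_1\cdots\sigma_i$.

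For continuity, the point I will exploit is that $G$ is profinite, so each $G^n$ is compact Hausdorff, hence locally compact Hausdorff.  The compact-open topology on $C^n(G,X) = \Maps(G^n,X)$ therefore satisfies the exponential law: a map $\Phi \colon Y \to \Maps(G^n,X)$ is continuous if and only if its adjoint $\tilde\Phi \colon Y \times G^n \to X$ is continuous.  Applying this with $Y = C^i(G,M) \times C^j(G,N)$, $n = i+j$, and $X = L$, continuity of the cup product will reduce to continuity of the total evaluation
\[
	\bigl(C^i(G,M) \times C^j(G,N)\bigr) \times G^{i+j} \to L,
	\qquad ((f,g),\sigma) \mapsto (f \cup g)(\sigma).
\]
This in turn factors as a composition of manifestly continuous pieces:  the projections $G^{i+j} \to G^i$ and $G^{i+j} \to G^j$, the iterated multiplication $G^i \to G$, the evaluations $C^i(G,M) \times G^i \to M$ and $C^j(G,N) \times G^j \to N$ (continuous since $G^i$ and $G^j$ are locally compact Hausdorff), the continuous $G$-action $G \times N \to N$, and $\phi$ itself.

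The only obstacle to anticipate is bookkeeping: one must track the left-versus-right actions and the order of group factors with care to confirm that the formula produces the claimed $\Omega$-$\Sigma$-bimodule homomorphism without any implicit commutativity, and that the compatibilities $\lambda \cdot (\sigma n) = \sigma(\lambda \cdot n)$ and $(\omega m)\lambda = \omega(m\lambda)$ demanded by the bimodule and $G$-module axioms are being used in the correct places.  Once the exponential law is invoked, no further analytic input is required beyond the continuity of $\phi$ and of the $G$-actions on $M$, $N$, and $L$ that come packaged with the ambient categories.
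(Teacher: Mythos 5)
Your proof is correct, and since the paper declares the result ``immediate from the definitions'' and gives no argument, your write-up is simply the careful unwinding that the paper leaves implicit. The formula for $\cup$ is the standard one, the algebraic verifications correctly reduce to the hypotheses on $\phi$ together with the commuting-action axioms built into $\T_{\Omega\text{-}\La,G}$ and $\T_{\La\text{-}\Sigma,G}$, and the continuity argument via the exponential law is sound: the direction you actually use (continuity of the adjoint implies continuity into $\Maps(G^{i+j},L)$) holds in general, while the continuity of the evaluation maps $C^i(G,M)\times G^i\to M$ and $C^j(G,N)\times G^j\to N$ is where local compactness of the powers of $G$ is genuinely needed, and you invoke it there.
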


\section{A comparison of Poitou-Tate and Kummer maps}

\numberwithin{theorem}{section}

Let $p$ be a prime number.  Let $F$ be a number field, and let $S$ denote a finite set of primes of $F$ including those above $p$ and any real places.   We assume that $p$ is odd or $F$ is purely imaginary.  

Poitou-Tate duality provides us with a canonically defined homomorphism
$$
	H^1(G_{F,S},\qp/\zp)^{\vee} \to H^2_{\cts}(G_{F,S},\zp(1)),
$$
and hence, via our identifications, a homomorphism $\mf{X}_F \to Y_F$.  The natural question to ask is whether or not this map is the restriction map on Galois groups, and the answer is that in fact it is.  However, at the time of the writing of this appendix as a note, we were unable to find a proof of this nonobvious but useful fact in the literature.  This fact is not strictly necessary in the paper itself, but we feel that this should appear somewhere in print and fits very nicely with the theme of the present article.  We thank Kay Wingberg and Alexander Schmidt for helpful discussions regarding the proof.

\begin{theorem} \label{mapisnat}
	The Poitou-Tate map $H^1(G_{F,S},\qp/\zp)^{\vee} 
	\to H^2_{\cts}(G_{F,S},\zp(1))$ induces the restriction map
	$\mf{X}_F \to Y_F$ on Galois groups.
\end{theorem}

\begin{proof}
	Let $F_S$ denote the Galois group of the maximal unramified outside $S$-extension of $F$,
	and let $\mc{O}_S$ denote its ring of $S$-integers.
	Moreover, let $I_S$ and $C_S$ be the $S$-id\`ele group and $S$-id\`ele class group of $F_S$, respectively,
	and let $I_S(F)$ and $C_S(F)$ denote the $S$-id\'ele group and $S$-id\'ele class group of $F$, 
	respectively.
	Finally, let $\mr{Cl}_{F,S}$ denote the $S$-class group of $F$.

	We first recall the definition of the Poitou-Tate map
	$$
		H^1(G_{F,S},\qp/\zp)^{\vee} \to H^2_{\cts}(G_{F,S},\zp(1)).
	$$
	We find it most convenient to work modulo $p^n$ throughout and then take inverse limits.
	Modulo $p^n$, the Poitou-Tate map arises simply as the composition
	$$
		H^1(G_{F,S},\Z/p^n\Z)^{\vee} \to H^0(G_{F,S},\Hom(\mu_{p^n},C_S))^{\vee} \to H^2(G_{F,S},\mu_{p^n}),
	$$
	where the first map is the dual of the connecting homomorphism arising from the sequence
	$$
		0 \to \mu_{p^n} \to \Hom(\mu_{p^n},I_S) \to \Hom(\mu_{p^n},C_S) \to 0
	$$
	and the second map arises from the duality
	$$
		H^0(G_{F,S},\Hom(\mu_{p^n},C_S)) \times H^2(G_{F,S},\mu_{p^n}) \xrightarrow{\cup} H^2(F,C_S) 
		\xrightarrow{\mr{inv}} \Q/\Z,
	$$
	where ``$\mr{inv}$" denotes the invariant map.
	
	Next, we explain the injection $Y_F \hookrightarrow 
	H^2_{\cts}(G_{F,S},\zp(1))$, again working modulo $p^n$.
	Recall that the reciprocity homomorphism
	$$
		\mr{rec} \colon C_S(F)/p^n \to \mf{X}_F/p^n
	$$
	is dual to the connecting homomorphism 
	$$
		\delta \colon H^1(G_{F,S},\Z/p^n\Z) \to H^2(G_{F,S},\Z)[p^n]
	$$
	under the cup product
	$$
		H^2(G_{F,S},\Z)[p^n] \times H^0(G_{F,S},C_S)/p^n \xrightarrow{\cup} 
		H^2(G_{F,S},C_S)[p^n] \xrightarrow{\mr{inv}} \Z/p^n\Z
	$$
	in the sense that 
	$$
		\mr{inv} (\delta\phi \cup a) = \phi(\mr{rec}(a))
	$$
	for $\phi \in H^1(G_{F,S},\Z/p^n\Z)$ and $a \in C_S(F)/p^n$.
	
	From the long exact sequence attached to
	$$
		0 \to \mc{O}_S^{\times} \to I_S \to C_S \to 0
	$$
	and the fact that the cokernel of $I_S(F) \to C_S(F)$ is isomorphic to $\mr{Cl}_{F,S}$, 
	we have an isomorphism $H^1(G_{F,S},\mc{O}_S^{\times}) \cong \mr{Cl}_{F,S}$, and an induced
	reciprocity map
	$$
		\mr{rec} \colon \mr{Cl}_{F,S}/p^n \to Y_F/p^n.
	$$
	From the long exact sequence attached to
	$$
		0 \to \mu_{p^n} \to \mc{O}_S^{\times} \xrightarrow{p^n} \mc{O}_S^{\times} \to 0,
	$$
	we obtain an injection that is the composite
	$$
		Y_F/p^n \xrightarrow{\mr{rec}^{-1}}  \mr{Cl}_{F,S}/p^n \xrightarrow{\sim}
		H^1(G_{F,S},\mc{O}_S^{\times})/p^n \hookrightarrow H^2(G_{F,S},\mu_{p^n}).
	$$
	This gives the identification of $Y_F/p^n$ with a subgroup of $H^2(G_{F,S},\mu_{p^n})$
	arising from Kummer theory and class field theory.
	
	Putting all of the definitions together, the proposition is reduced to the commutativity of the diagram
	$$
		 \xymatrix@C=30pt{ 
		\Hom(\mu_{p^n},C_S)^{G_{F,S}} \times  H^2(G_{F,S},\mu_{p^n})  \ar@<-7ex>[d]  \ar[r]^-{\cup} & \Q/\Z \\
		H^1(G_{F,S},\Z/p^n\Z) \times  H^1(G_{F,S},\mc{O}_S^{\times}) \ar@<-7ex>[d]  \ar@<-8ex>[u]  \\
		\quad H^2(G_{F,S},\Z) \ \ \ \times  H^0(G_{F,S},C_S) \ar@<-8ex>[u] \ar[r]^-{\cup} & \Q/\Z \ar@{=}[uu], 
		}
	$$
	in the obvious sense.  This diagram can be found (without proof of its commutativity) in 
	\cite[(8.4.6)]{nsw0}, though not in the second edition of the book.
	
	To show its commutativity, we replace the
	right-hand composition with the composition
	$$
		H^0(G_{F,S},C_S) \to H^1(G_{F,S},C_S[p^n]) \to H^2(G_{F,S},\mu_{p^n})
	$$
	which is in fact its negative by a standard lemma (e.g., \cite[(1.3.4)]{nsw0}), 
	since we have a commutative diagram
	$$
		 \xymatrix{
		& 0 \ar[d] & 0 \ar[d] & 0 \ar[d] & \\
		0 \ar[r] & \mu_{p^n} \ar[r] \ar[d] & I_S[p^n] \ar[r] \ar[d] & C_S[p^n] \ar[r] \ar[d] & 0 \\
		0 \ar[r] & \mc{O}_S^{\times} \ar[r] \ar[d]^{p^n} & I_S \ar[r] \ar[d]^{p^n} & C_S \ar[r] \ar[d]^{p^n} & 0 \\
		0 \ar[r] & \mc{O}_S^{\times} \ar[r] \ar[d] & I_S \ar[r] \ar[d] & C_S \ar[r] \ar[d] & 0 \\
		& 0 & 0 & 0 & 
		}
	$$
	noting the $p$-divisibility of $C_S$ \cite[(10.9.5)]{nsw0}.  We therefore have a new diagram
	$$
		 \xymatrix@C=30pt{ 
		\Hom(\mu_{p^n},C_S)^{G_{F,S}}  \times \ \,  H^2(G_{F,S},\mu_{p^n}) \ar@<-7ex>[d] \ar[r]^-{\cup} & \Q/\Z \\
		\ \ H^1(G_{F,S},\Z/p^n\Z) \  \times H^1(G_{F,S},C_S[p^n]) \ar@<-9ex>[u]  \ar@<-7ex>[d]  \ar[r]^-{\cup} & \Q/\Z \ar[u]_{-1} \\
		\quad H^2(G_{F,S},\Z) \ \ \ \,    \times \ \    H^0(G_{F,S},C_S) \ar@<-9ex>[u] \ar[r]^-{\cup} & \Q/\Z \ar@{=}[u], }
	$$
	which commutes by two applications of \cite[Theorem III.2.1]{lang}.  That is, for the lower rectangle,
	take the pairing $\Z \times C_S \to C_S$, and for the upper, take the Galois-equivariant pairing
	$$
		I_S[p^n] \times \Hom(\mu_{p^n},I_S[p^n]) \to C_S[p^n]
	$$
	given by multiplication on $I_S[p^n]$ followed by projection, noting that
	$$
		I_S[p^n] = \lim_{\substack{\rightarrow \\ E \subset F_S}} \bigoplus_{v \in S_E} \mu_{p^n}(E_v).
	$$
	The result follows.
\end{proof}

\end{document}